\newcommand{\eeq}{\end{equation}}
\newcommand{\bZ} {\mathbb{Z}_p}
\newcommand{\bQ}{\mathbb{Q}_p}
\newcommand{\bs}{{\bf s}}
\newcommand{\bx}{{\bf x}}
\newcommand{\sRS}{{\mathcal{R}(S)}}
\newcommand{\sC}{{\mathcal C}}
\newcommand{\CC}{\mathbb{C}}
\newcommand{\NN}{\mathbb{N}}
\newcommand{\QQ}{\mathbb{Q}}
\newcommand{\ZZ}{\mathbb{Z}}
\newcommand{\ev}{{\rm ev}}
\newcommand{\Et} {\tau}
\newcommand{\Es}{ \sigma}
\newcommand{\tauu}{{\rho}}
\newcommand{\taux}{{\rho}}
\newcommand{\muu}{{\nu}}
\newcommand{\sS}{{\mathcal S}}
\newcommand{\sM}{{ U}}
\newcommand{\sY}{{\mathcal N}}
\newtheorem{thm}{Theorem}[section]
\newtheorem{exam}[thm]{Example}
\newtheorem{rem}[thm]{Remark}
\newtheorem{defi}[thm]{Definition}
\newtheorem{prop}[thm]{Proposition}
\newtheorem{lem}[thm]{Lemma}
\def\slfrac#1#2{\hbox{\kern.1em %
 \raise.5ex\hbox{\the\scriptfont0 #1}\kern-.11em %
 /\kern-.15em\lower.25ex\hbox{\the\scriptfont0 #2}}}
\title{A Skolem-Mahler-Lech theorem for iterated automorphisms of $K$-algebras}
\subjclass[2010]{Primary: 11D45. ~Secondary: 14R10. 11Y55, 11D88}
\keywords{automorphisms, endomorphisms, affine space, 
commutative algebras, Skolem-Mahler-Lech theorem}
\author{Jason P. Bell}
\thanks{The research of the first-named author  was supported by NSERC grant 611456.}
\address{ Department of Mathematics, University of Waterloo, 
Waterloo, ON, N2L  3G1, CANADA}
\email{jpbell@uwaterloo.ca}
\author{Jeffrey C. Lagarias}
\thanks{The research of the second author was supported by NSF grants 
DMS-0801029 and DMS-1101373.}
\address{Department of Mathematics, University of Michigan, 530 Church Street,
Ann Arbor, MI 48109-1043, USA}
\email{lagarias@umich.edu}
\date{November 21, 2013, rev6}
\begin{document}

\begin{abstract} 
This paper proves  a commutative algebraic extension 
 of a generalized Skolem-Mahler-Lech theorem due to the first
author.
Let  $A$ be a finitely generated commutative $K$-algebra
over a field of characteristic $0$, and let $\Es$ be  
a $K$-algebra automorphism  of $A$.
Given ideals $I$ and $J$ of $A$, we show that  
 the set $S$ of integers $m$ such that 
$ \Es^m(I) \supseteq J$ is a finite union of 
complete doubly infinite arithmetic progressions in $m$, up to the addition of a finite set. 
Alternatively, this  result  states that for an affine scheme $X$ of finite type over $K$,
an automorphism $\sigma \in {\rm Aut}_K(X)$, and $Y$ and $Z$ any two closed subschemes of $X$,  the set 
of integers $m$ with $\sigma^m(Z ) \subseteq Y$ is as above.
The paper presents  examples
showing that this result may fail to hold if the affine scheme $X$ is 
not of finite type, or if $X$ is of finite type but  the field $K$ has positive characteristic.
\end{abstract}


\maketitle
\tableofcontents

%
%
%
%
\section{Introduction}

The Skolem-Mahler-Lech theorem is a fundamental result,
which  characterizes
the structure of the  set of zeros of a linear recurrence;
we term the resulting  structure  the SML property below.
The paper is motivated by  the question:  ``What is the maximal level of generality
of the Skolem-Mahler-Lech theorem, in which its conclusion holds?"    
We view  this question in the general framework of orbits of dynamical systems
of algebraic type.

In 2006 the  first author gave an algebro-geometric generalization of the
Skolem-Mahler-Lech theorem which applied to orbits of an automorphism
of an affine variety acting on  geometric points. 
The object of this paper is to show there is a
further algebro-geometric generalization of the
Skolem-Mahler-Lech theorem that applies  to automorphisms
on the coordinate ring of the variety acting  at  the level of ideals.
The new result may be interpreted as a result about automorphisms
of affine schemes.
To state it, we first review the successive generalizations of the Skolem-Mahler-Lech
theorem. 
%
%
%
%
\subsection{The Skolem-Mahler-Lech theorem}\label{sec10}

The Skolem-Mahler-Lech theorem (hereafter abbreviated to SML theorem) is
a fundamental result which can be stated in several apparently
different forms.  The  original formulation  of the SML theorem  
concerned the zeros of power series coefficients of rational functions.

\begin{thm} {\em (SML theorem-rational function form)\,} \label{thm:SML2}
 Let  $K$ be a field of characteristic zero, and let  $G(x) \in K(x)$ be a rational function
 that is finite at $x=0$.  If the  Taylor expansion
$G(x) \in K[[x]]$ at $x=0$  is given as
$$
G(x) = \sum_{k= 0}^{\infty} a_k  x^k,
 $$
 then the set of its zero coefficients
 $$
 S :=\{ k \in \NN: a_k =0\}
 $$ 
  is a union of a finite number
of arithmetic 
progressions $\{ an+b: n \ge 0\}$,  with  $a, b>0$,
together with a (possibly empty) finite set. 
\end{thm}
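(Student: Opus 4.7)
The plan is to apply Skolem's $p$-adic method, the classical approach to this theorem.

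First, I would reduce the rational-function statement to a linear recurrence. Since $G(x) = P(x)/Q(x)$ with $P, Q \in K[x]$ and $Q(0) \neq 0$, clearing denominators in $Q(x) G(x) = P(x)$ shows that for $k$ larger than $\deg P$, the coefficients $a_k$ satisfy a homogeneous linear recurrence with constant coefficients. Equivalently, there exist a square matrix $A \in M_d(K)$ and vectors $\mathbf{u}, \mathbf{v} \in K^d$ such that $a_k = \mathbf{u}^T A^k \mathbf{v}$ for all sufficiently large $k$; the finitely many small indices where this representation may fail are absorbed into the allowed finite exceptional set in $S$.

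Next, I would pass to a $p$-adic setting. The entries of $A$, $\mathbf{u}$, $\mathbf{v}$ lie in a finitely generated $\ZZ$-subalgebra $R \subset K$, and since $\mathrm{char}(K) = 0$, a standard Cassels-style embedding argument produces infinitely many primes $p$ together with an embedding $R \hookrightarrow \ZZ_p$ under which $\det(A)$ maps to a $p$-adic unit; in particular the image of $A$ lies in $GL_d(\ZZ_p)$. Because $GL_d(\mathbb{F}_p)$ is finite, there is a positive integer $N$ with $A^N \equiv I \pmod{p}$, so $A^N = I + pB$ for some $B \in M_d(\ZZ_p)$.

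Third, I would split the sequence into residue classes modulo $N$ and apply Strassman's theorem. For each $r \in \{0, 1, \ldots, N-1\}$, define
$$
f_r(t) := \mathbf{u}^T (I + pB)^t A^r \mathbf{v} = \sum_{j \geq 0} \binom{t}{j} p^j \, \mathbf{u}^T B^j A^r \mathbf{v},
$$
which converges for every $t \in \ZZ_p$ and gives a $p$-adic analytic function on $\ZZ_p$ that agrees with $a_{Nk+r}$ at $t = k \in \NN$. Strassman's theorem says a nonzero power series convergent on $\ZZ_p$ has only finitely many zeros there; hence either $f_r \equiv 0$ and the full progression $\{Nk+r : k \geq 0\}$ lies in $S$, or $f_r$ contributes only finitely many elements of $S$. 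Taking the union over $r$ yields the claimed structure.

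The main obstacle is the embedding step: given an arbitrary characteristic-zero field $K$, one must produce a prime $p$ and a ring homomorphism of the finitely generated $\ZZ$-algebra $R$ into $\ZZ_p$ preserving the invertibility of $\det(A)$. This is a nontrivial commutative-algebra input, essentially a specialization or Hilbert-Nullstellensatz-style statement, and is what allows the argument to cover general fields such as $\CC$ rather than merely number fields. Once this reduction is in hand, the $p$-adic analysis via Strassman's theorem is routine, and in fact the same template will guide the more general ideal-theoretic statement that the paper is building toward.
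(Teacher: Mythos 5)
Your proof is correct and follows the classical Skolem $p$-adic method, which is exactly what the paper itself relies on: the paper does not prove Theorem~\ref{thm:SML2} directly but cites Skolem, Mahler, and Lech for it, and the two key tools you invoke---the embedding of a finitely generated $\ZZ$-algebra into some $\ZZ_p$ with a prescribed element made a unit (the paper's Lemma~\ref{le31}, attributed to Lech) and Strassman's theorem (the paper's Theorem~\ref{thm: Strassman})---are precisely the $p$-adic ingredients the paper later uses to prove its more general Theorem~\ref{thm:main}, of which this is a special case. So the approach matches; no issues.
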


This result was established  by Skolem \cite{Sk34} in 1934 for the case of
rational coefficients, using a $p$-adic  method he introduced in  \cite{Sk33}.
It was extended by Mahler \cite[Sect. 6]{Mah35} in 1935 to series
to coefficients in algebraic number fields,  and much later in 1956 (\cite{Mah56})  to coefficients
in the complex field  $\CC$. 
 In this last extension Mahler was  unaware 
 of  the 1953 work of Lech \cite{Le53} discussed below, see \cite{Mah57}.

There are now many different proofs and extensions of 
the Skolem-Mahler-Lech theorem in the
literature \cite{Bez89, Han86, vdP89, vdPT75}, some described in 
the  book of  Everest et al. \cite[Chap. 2.1]{EVSW03}.  The result
is valid in any field of characteristic zero, however  all known 
proofs use $p$-adic methods.

The  property of the set $S$ formulated in the conclusion  of 
all versions of the Skolem-Mahler-Lech theorem
can be stated as follows.


\begin{defi}\label{de01}
{\em 
 A  set  of natural numbers $S \subset \NN$ has the {\em (one-sided)   SML property} 
if it is a finite union of one-sided arithmetic progressions $\{ an+ b: n \ge 0\}$ with $a, b >0$,
augmented by a finite set, possibly empty. (We allow the possibility $b > a$ in the 
one-sided arithmetic progressions, i.e. some initial terms of the complete progression in $\NN$
may be omitted.)
}
\end{defi}

A second form of the Skolem-Mahler-Lech theorem concerns zeros of recurrence sequences.
This version  was first formulated by Lech \cite{Le53} in 1953. 

\begin{thm} {\em (SML theorem-recurrence  form)\,} \label{thm:SML}
 Let $K$ be a field of characteristic zero and let $f:\mathbb{N}\to K$ be a $K$-valued sequence
 that satisfies  a linear recurrence
 $$
 f(n)= a_1 f(n-1) + a_2 f(n-2) + \cdots + a_r f(n-r),  ~~~~   a_i \in K.
  $$
 for all $n \ge r$. Then the set  of zeros of the recurrence, 
 $$
 S  := \{ n \ge 1:  ~~f(n) =0\}.
 $$ 
 has the SML property.
\end{thm}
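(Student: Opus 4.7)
The plan is to follow the classical $p$-adic strategy in three stages: reduce to a recurrence with coefficients in a complete local ring of mixed characteristic, express $f$ on each residue class modulo some integer $N$ as the restriction of a $p$-adic analytic function, and invoke Strassmann's theorem to describe the zero set.

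First I would recast the recurrence in matrix form. Let $A$ denote the $r \times r$ companion matrix of the recurrence, so that $f(n) = u^{T} A^{n} v$ for column vectors $u, v$ encoding the initial data and the projection onto the first coordinate. All entries of $A$, $u$, $v$ lie in a finitely generated subfield $F \subseteq K$; after inverting $\det A$ together with finitely many other elements, I may assume $A \in \mathrm{GL}_{r}(R)$ for a finitely generated $\mathbb{Z}$-algebra domain $R \subseteq F$. By Noether normalization combined with the Nullstellensatz, $R$ admits a maximal ideal $\mathfrak{m}$ whose residue field is finite of some characteristic $p$. Completion along $\mathfrak{m}$ then embeds $R$ into the ring of integers $\mathcal{O}$ of a finite extension of $\mathbb{Q}_{p}$, and $A$ maps into $\mathrm{GL}_r(\mathcal{O})$.

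Second, the principal congruence subgroup $I + \pi^{e} M_r(\mathcal{O})$ (with $\pi$ a uniformizer of $\mathcal{O}$ and $e$ sufficiently large that the $p$-adic exponential converges on $\pi^{e}M_r(\mathcal{O})$) is an open subgroup of $\mathrm{GL}_r(\mathcal{O})$ of finite index, say $N$. Hence $A^{N}$ lies in this subgroup, and I can write $A^{N} = \exp(B)$ for a unique $B \in \pi^{e} M_r(\mathcal{O})$ via the $p$-adic logarithm. Then for each residue class $b \in \{0, 1, \ldots, N-1\}$ the function
\[
g_{b}(z) \;:=\; u^{T}\exp(zB)\,A^{b} v
\]
is a convergent $p$-adic power series on $\mathcal{O}$ whose values at $z = n \in \mathbb{N}$ equal $f(Nn + b)$. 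By Strassmann's theorem, each nonzero $g_{b}$ has only finitely many zeros in $\mathcal{O}$, and hence only finitely many nonnegative integer zeros; while if $g_{b} \equiv 0$ then $\{Nn + b : n \geq 0\} \subseteq S$. Taking the union over $b$ of these contributions gives precisely the SML property.

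The main obstacle, as in every known proof of SML, is the specialization step: one must find a prime $p$ and a maximal ideal $\mathfrak{m}$ of $R$ of residue characteristic $p$ such that all the relevant algebraic structure is preserved over $\mathcal{O}$ (in particular so that $\det A$ remains a unit in $\mathcal{O}$). This is arranged by choosing $\mathfrak{m}$ to avoid a controlled finite set of ``bad'' primes coming from the denominators appearing in $A$ and from $\det A$. Once such a prime is selected, the remaining $p$-adic analysis---convergence of $\exp$ and $\log$, and Strassmann's theorem---is entirely routine.
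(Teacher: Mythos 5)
Your proposal does not follow the paper's route: the paper does not prove Theorem~\ref{thm:SML} directly at all, but states it as the classical result of Skolem--Mahler--Lech and then observes that it is recovered as a special case of the far more general Theorem~\ref{thm:main} (via the chain Theorem~\ref{thm:main} $\Rightarrow$ Theorem~\ref{thm:1} $\Rightarrow$ Theorem~\ref{th14} $\Rightarrow$ Theorem~\ref{thm:SML}). What you have written is essentially the standard direct $p$-adic proof of the recurrence form: express $f(n)=u^{T}A^{n}v$, specialize to a $p$-adic ring of integers, take a power $A^{N}$ in the image of $\exp$, and apply Strassmann on each residue class. That is a legitimate and much shorter approach for this particular statement, and it exhibits more cleanly the structural kinship with the machinery the paper does develop (the generalized $p$-adic analytic arc theorem of Section~3 is exactly the nonlinear analogue of your $z\mapsto u^{T}\exp(zB)A^{b}v$ parametrization, and Section~2.3 contains both the embedding lemma and the Strassmann variant you rely on). So your route is genuinely different from the paper's treatment of this theorem, but it is not different from the classical proofs in the literature it cites.

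There is, however, one real gap in your specialization step, and it is worth flagging because it is the only point at which the $p$-adic strategy ever requires care. You assert that ``Completion along $\mathfrak{m}$ then embeds $R$ into the ring of integers $\mathcal{O}$ of a finite extension of $\mathbb{Q}_p$.'' That is false when $R$ has Krull dimension greater than one: for $R=\mathbb{Z}[t]$ and $\mathfrak{m}=(p,t)$ the completion is $\mathbb{Z}_p[[t]]$, a two-dimensional local ring that is not a valuation ring, and no ring of integers of a local field contains it. The map $R\hookrightarrow\widehat{R}_{\mathfrak{m}}$ is injective by Krull's intersection theorem, but that does not produce the desired embedding into a one-dimensional complete DVR. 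The correct tool here is precisely Lemma~\ref{le31} of the paper (Lech's embedding lemma, which rests on the Chebotarev density theorem, or equivalently Cassels's embedding theorem): a finitely generated field extension $K_0$ of $\mathbb{Q}$ embeds into $\mathbb{Q}_p$ for infinitely many $p$, and one may moreover arrange that finitely many prescribed nonzero elements of $K_0$---in your case the entries of $A$, $A^{-1}$, $u$, $v$ and $\det A$---all land in $\mathbb{Z}_p^{\times}$. Replacing your completion step by an appeal to that lemma, the remainder of your argument (the congruence subgroup, $\exp$/$\log$ on $p\mathbb{Z}_p$ with $p\geq 3$, and Strassmann) is correct and gives the SML property.
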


We can view the recurrence version of the SML theorem as asserting a property of a forward orbit
of a discrete dynamical system given by the linear recurrence. 
Note that the  set $S$ can also be interpreted as
the intersection of the forward orbits of two different dynamical systems, the first being the recurrence
$f(n)$ and the second  being
the constant linear recurrence $g(n)=0.$

The recurrence form of the SML theorem can be recast 
in a third equivalent form, which concerns iteration of linear maps,
 and encodes  containment relations of orbits of 
invertible linear maps under iteration in a  subspace
(\cite[Theorem 1.2 and Sect. 2]{Be06}).


\begin{thm} \label{th14} 
{\em (SML theorem-linear map form)}
Let $K$ be a field of characteristic $0$, and let
 $\sigma: K^n\rightarrow K^n$ be an invertible
linear map.
If  ${\bf v}\in K^n$, and  $W$ is  a vector subspace of
$K^n$ of codimension $1$, then  the set 
$$
S := \{ n \in {\mathbb N}:  \sigma^n({\bf v})\in W\}
$$
has the SML property. 
\end{thm}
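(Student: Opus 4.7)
The plan is to reduce Theorem \ref{th14} to the recurrence form of SML (Theorem \ref{thm:SML}). Since $W \subset K^n$ has codimension $1$, choose a nonzero $K$-linear functional $\phi: K^n \to K$ with $\ker(\phi) = W$. Define the sequence
$$f(m) := \phi(\sigma^m({\bf v})) \in K, \quad m \in \NN.$$
Then $\sigma^m({\bf v}) \in W$ if and only if $f(m) = 0$, so the set $S$ in the theorem is precisely the zero set of $f$.

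Next I would verify that $f$ satisfies a $K$-linear recurrence. By the Cayley--Hamilton theorem, $\sigma$ satisfies its characteristic polynomial $p(x) = x^n + c_{n-1}x^{n-1} + \cdots + c_0 \in K[x]$, yielding the operator identity
$$\sigma^{m+n} = -\sum_{i=0}^{n-1} c_i \, \sigma^{m+i} \quad \text{for every } m \geq 0.$$
Applying $\phi$ to both sides evaluated at ${\bf v}$ and using the linearity of $\phi$ gives
$$f(m+n) = -\sum_{i=0}^{n-1} c_i \, f(m+i) \quad \text{for every } m \geq 0,$$
so $f$ is a $K$-valued linear recurrence sequence of order at most $n$.

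The conclusion then follows immediately by applying Theorem \ref{thm:SML} to $f$: the set $S = \{m \in \NN : f(m) = 0\}$ has the SML property in the sense of Definition \ref{de01}. The main obstacle is essentially already packaged inside the recurrence form of SML, which we invoke as a black box; the only substantive verification on our side is that $f$ genuinely satisfies a linear recurrence, and this is immediate from Cayley--Hamilton. Observe that invertibility of $\sigma$ plays no role in this one-sided statement; it would become essential only for a two-sided extension to $m \in \ZZ$, where the recurrence can be run backwards precisely because the constant term $c_0 = \pm \det(\sigma)$ is nonzero.
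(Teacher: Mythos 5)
Your proof is correct, and it is the standard equivalence argument. The paper itself does not supply a proof of Theorem \ref{th14}; it defers to the first author's 2006 paper (cited at the statement as ``\cite[Theorem 1.2 and Sect.\ 2]{Be06}''), where this reduction from the linear-map form to the recurrence form via a linear functional and Cayley--Hamilton is precisely what is done. Your derivation of the order-$n$ recurrence $f(k) = -\sum_{i=0}^{n-1} c_i\, f(k-n+i)$ for $k \ge n$ from the operator identity is sound, the identification of $S$ with the zero set of $f$ is exact, and invoking Theorem \ref{thm:SML} then finishes the job. Your closing observation is also right and worth keeping in mind for the rest of the paper: invertibility of $\sigma$ is irrelevant to the one-sided statement (any linear map satisfies its characteristic polynomial), but it is exactly what makes $c_0 = \pm\det\sigma \ne 0$ and so lets one run the recurrence backward, which is the mechanism behind the two-sided SML property appearing in Theorem \ref{thm:1} and the main results of the paper.
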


In 2006 the first author \cite{Be06, Be06b}
found a  generalization of the SML theorem
to algebraic dynamics, which
 applies at the geometric level 
 to iteration of  automorphisms on affine algebraic
varieties.  It  concerns two-sided infinite sequences of iterates, and to state it  we extend the
notion of SML property to this case. 

%
%

\begin{defi}\label{de02}
{\em  A set of integers $S \subset \ZZ$ is said to have the {\em two-sided SML property}
if it is a finite union of complete arithmetic $\{ an+b: n \in \ZZ\}$
progressions on $\ZZ$ augmented by a finite set.  (That is,  we allow arithmetic
progression with $a=0$ which give one element sets.)
}
\end{defi}

The  generalization of the SML theorem to 
automorphisms on affine varieties is as follows.

%
%
\begin{thm} \label{thm:1}
{\em (Generalized SML Theorem for Affine Varieties)} 
Let   $K$ be a 
 field of characteristic zero, and 
 let $X$ be an affine $K$-variety and let $\sigma$ be an automorphism of $X$.  
 If $x$ is a $K$-point of $X$ and $Y$ is a subvariety of $X$ (Zariski closed subset of $X$), 
 then the set  
 $$
 S := \{ n \in \ZZ:  \sigma^n(x) \in Y\}
 $$
 has the two-sided SML property.
\end{thm}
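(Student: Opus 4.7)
My plan is to follow the $p$-adic strategy that underlies essentially all proofs of Skolem--Mahler--Lech type theorems, but carried out at the geometric level of an affine variety, as pioneered by the first author. The data $(X, \sigma, x, Y)$ involves only finitely many elements of $K$ (coefficients of defining equations for $X$ and $Y$, matrix/polynomial entries describing $\sigma$ and $\sigma^{-1}$, and coordinates of $x$), so by the standard specialization argument I may assume $K$ is a finitely generated $\QQ$-algebra. Then I can choose a prime $p$ and an embedding of $K$ into $\QQ_p$ such that: (a) $X$ has a model $\mathcal{X}$ over $\ZZ_p$, (b) $\sigma$ and $\sigma^{-1}$ extend to morphisms of $\mathcal{X}$ over $\ZZ_p$, (c) the $K$-point $x$ extends to a $\ZZ_p$-point whose special fibre reduction $\bar x$ lies in the smooth locus, and (d) $Y$ is cut out by finitely many polynomial equations with coefficients in $\ZZ_p$. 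This is the standard good reduction setup.

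The heart of the argument is a $p$-adic analytic interpolation of the orbit. Since the residue field is finite, some power $\sigma^N$ fixes $\bar x$, so $\sigma^N$ preserves the $p$-adic residue disk $U$ around $x$. Choosing local coordinates $t_1, \dots, t_d$ on $U$ that identify $U$ with $(p\ZZ_p)^d$ (and replacing $N$ by a further multiple if necessary, a refinement that depends on $p$ and the Jacobian of $\sigma^N$ at $\bar x$), the map $\sigma^N$ becomes a $p$-adic analytic self-map of $(p\ZZ_p)^d$ that is sufficiently close to the identity. A standard formal-group / logarithm style argument then produces, for each residue class $i \in \{0, 1, \dots, N-1\}$, a $p$-adic analytic function
\[
F_i : \ZZ_p \longrightarrow U \subseteq \mathcal{X}(\ZZ_p)
\]
satisfying $F_i(k) = \sigma^{kN+i}(x)$ for every $k \in \ZZ$. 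Concretely one writes $\sigma^N$ in coordinates as $\mathrm{id} + p \cdot (\text{analytic})$, takes a suitable logarithm or uses iterative composition to express the $k$-th iterate as a power series in $k$, and then pre-composes with $\sigma^i$ to handle each residue class; any $N$ or $p$ must be inflated as needed for convergence.

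Once the $F_i$ are in hand, the conclusion is immediate from Strassmann's theorem. If $Y$ is cut out in $X$ by $f_1, \dots, f_r \in \mathcal{O}(X)$, then for each $i$,
\[
\{k \in \ZZ : \sigma^{kN+i}(x) \in Y\} = \bigcap_{j=1}^r \{k \in \ZZ : (f_j \circ F_i)(k) = 0\},
\]
where each $f_j \circ F_i$ is a power series in $k$ converging on $\ZZ_p$. By Strassmann, such a series is either identically zero (so every $k \in \ZZ$ contributes, yielding the full arithmetic progression $N\ZZ + i$) or has only finitely many zeros on $\ZZ_p$ (so only finitely many $k$ contribute). Taking the union over $i = 0, \dots, N-1$ expresses $S$ as a union of finitely many complete two-sided arithmetic progressions of common difference $N$, together with a finite exceptional set. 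This is exactly the two-sided SML property of Definition 1.6.

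The main obstacle is the second paragraph: producing the $p$-adic analytic interpolation with the right global properties. One must choose $p$ and $N$ carefully so that $\sigma^N$ is not merely continuous but is an analytic contraction close enough to the identity for a logarithm (or an iterative Hensel-type construction) to converge, and this choice has to be uniform over the finitely many residue classes and over the finitely many iterates $\sigma^i(x)$ that serve as base points. The use of an automorphism rather than an endomorphism is essential here, because a two-sided interpolation by $k \in \ZZ_p$ requires that $\sigma^N$ itself be invertible on $U$; this is precisely the feature that allows the conclusion in two-sided rather than one-sided form.
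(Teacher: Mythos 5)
Your $p$-adic interpolation-plus-Strassmann strategy is indeed the engine behind Bell's 2006 proof of this theorem (and Poonen's later refinement), and the outline is mostly sound. However, there is a real gap in your setup step (c): you cannot always arrange that $\bar x$ lies in the smooth locus, because that would force $x$ to be a smooth $K$-point of $X$, which is not hypothesized. A singular point of $X$ cannot be made smooth by any choice of prime or of model $\mathcal{X}$ over $\mathbb{Z}_p$, so the residue disk $U$ need not be identifiable with $(p\mathbb{Z}_p)^d$, and the local-coordinates/logarithm construction collapses.

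The standard repair should be stated explicitly: since $\sigma$ is an automorphism, the non-smooth locus $X_{\mathrm{sing}}$ (with its reduced structure) is a proper closed $\sigma$-invariant subvariety of $X$. If $x \in X_{\mathrm{sing}}$ then every $\sigma^n(x)$ lies in $X_{\mathrm{sing}}$, so
$$S = \{ n \in \mathbb{Z} : \sigma^n(x) \in Y \cap X_{\mathrm{sing}} \},$$
and one may replace $(X, \sigma, x, Y)$ by $(X_{\mathrm{sing}}, \sigma|_{X_{\mathrm{sing}}}, x, Y \cap X_{\mathrm{sing}})$. Noetherian induction on $\dim X$ (the base case $\dim X = 0$ being trivial, since the orbit is finite) reduces to the situation where $x$ is a smooth point of $X$, after which your argument goes through.

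It is also worth flagging that the present paper deduces this theorem as a special case of its Theorem \ref{thm:main} by a genuinely different, purely algebraic route: it works with the coordinate ring $A=K[X]$, lifts $\sigma$ to an automorphism of a polynomial ring $K[x_1,\ldots ,x_N]$ via Srinivas's embedding theorem, and then invokes an analytic arc theorem formulated for dynamical evaluation maps $f_\sigma : S^d \to S^d$ over a torsion-free $\mathbb{Z}_p$-algebra $S$. That route bypasses local coordinates (and hence the smoothness issue) entirely, at the cost of more commutative algebra; your geometric route, once supplemented by the Noetherian reduction above, is leaner and closer to Bell's original argument.
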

\noindent

Despite its restriction to two-sided infinite sequences,
 Theorem \ref{thm:1} recovers the original (one-sided) 
recurrence form of  the Skolem-Mahler-Lech theorem above, via 
its implication of  the 
linear map form (Theorem \ref{th14}).
The proof of Theorem \ref{thm:1} in \cite{Be06, Be06b} again relies  on a  $p$-adic result, 
the {\em $p$-adic analytic arc theorem}, first established in \cite{Be06, Be06b} and then strengthened in \cite{BGT}. 
This latter result was recently further  strengthened  by Poonen \cite{Poo13}.
 
We also note that since the conclusion of Theorem \ref{thm:1}   is purely set-theoretic
and is preserved under extension of scalars, the result for a general  field $K$
follows  easily from the
special  that $K$ is an  algebraically closed field of characteristic zero.

Subsequent work of the first author with Ghioca and Tucker  \cite[Theorem 1.3]{BGT},
established an analogous theorem  for  forward orbits of 
\'{e}tale endomorphisms  of quasiprojective varieties  defined over $\CC$.  We 
also point out that Denis \cite{Den94} had earlier proved 
a special case of this result for \'etale self maps of $\mathbb{P}^n_{\mathbb{C}}$.   
Recently 
Sierra \cite[Conjecture 5.15]{Si11} suggests a possible extension of these results that 
would have interesting consequences for the algebras studied in noncommutative projective geometry.

%
%
%
%
\subsection{Main result}\label{sec11}

Our starting point  is the observation that Theorem \ref{thm:1} can be recast in purely algebraic terms. 
To do this,  suppose  that  $K$ is an algebraically closed field of characteristic zero.  
In classical affine algebraic geometry, we then have a  contravariant equivalence of categories 
$$\left\{{\rm Affine ~}K{\textrm{-Varieties}}\right\} \longleftrightarrow \left\{ {\rm Reduced~finitely~generated~}K\textrm{-algebras}\right\}
$$ 
induced by the functor which takes an affine variety $X$ to its coordinate ring $K[X]$ and which takes a morphism $\phi : X\to Y$ of affine varieties to the homomorphism $\phi^*:K[Y]\to K[X]$ of $K$-algebras given by $\phi^*(f)=f\circ \phi$ ( \cite[Corollaries 1.4 and 3.8]{Har77}).  The Nullstellensatz gives further information, 
showing that there is an inclusion reversing bijection between the subvarieties of an affine variety $X$ and the radical ideals of its coordinate ring $K[X]$.  
In particular, for a 
 morphism $\phi: X \to X$  the inclusion
$\phi (V(I)) \subseteq V(J)$ of zero sets of  radical ideals  $I$, $J$ corresponds to 
the reversed algebraic inclusion  $\phi^{\star}(I) \supseteq J$.
It follows that Theorem \ref{thm:1} restricted to algebraically closed fields $K$ may  be recast  in algebraic terms as: 
 {\em If  $A$ is a finitely generated reduced commutative $K$-algebra with a $K$-algebra automorphism $\sigma$ 
and $M$ and $J$ are ideals of $A$ with $M$ a maximal ideal and $J$ a radical ideal, then the set of integers $n$ for which 
$\sigma^{n}(M)\supseteq J$ has the two-sided SML property.}

From this algebraic perspective, it is natural to ask whether a similar result holds for more general ideal inclusions 
in finitely generated commutative $K$-algebras.  
Our main result  gives an affirmative answer to this question.

%
%

\begin{thm} \label{thm:main}
{\em (Generalized SML theorem for ideal inclusions)}
Let $K$ be any field of characteristic zero  and let $A$ be a finitely generated commutative
$K$-algebra.  If $\sigma: A\to A$ is a $K$-algebra automorphism and $I$ and $J$ are ideals of $A$, then 
$$
S = S(I, J) := \{n\in \mathbb{Z}~:~  \sigma^n(I) \supset J\}
$$
has the two-sided SML property. Here
$\sigma(I) := \{ \sigma(a): a \in I\}$.  
 \end{thm}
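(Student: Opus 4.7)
My strategy is to reduce Theorem~\ref{thm:main} to Theorem~\ref{thm:1} by parametrizing the orbit of closed subschemes of $X=\mathrm{Spec}(A)$ under $\sigma$ by $K$-points on an auxiliary Hilbert scheme.

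\emph{Reductions.} The class of two-sided SML subsets of $\mathbb{Z}$ is closed under finite intersections and under the negation $n\mapsto -n$ (both are immediate from Definition~\ref{de02}), and since $A$ is Noetherian, $J$ is finitely generated; so writing $J=(b_1,\ldots,b_s)$ reduces the problem to the case $J=(b)$ is principal. The condition $\sigma^n(I)\supseteq(b)$ is equivalent to $\sigma^{-n}(b)\in I$, so $S(I,(b))$ coincides with $-T$ where $T:=\{n\in\mathbb{Z}:\sigma^n(b)\in I\}$, and it suffices to prove that $T$ has the two-sided SML property for every $b\in A$ and every ideal $I\subseteq A$.

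\emph{Moduli-space reformulation.} Let $Y=V(I)$ and $V=V(b)$ be the closed subschemes of $X$ cut out by $I$ and $b$, and identify the algebra automorphism $\sigma$ with the induced scheme automorphism of $X$. The relation $\sigma^n(b)\in I$ is equivalent, scheme-theoretically, to $\sigma^n(Y)\subseteq V$. Because $\sigma$ is an automorphism, every subscheme in the orbit $\{\sigma^n(Y):n\in\mathbb{Z}\}$ is isomorphic to $Y$; after fixing a closed immersion $X\hookrightarrow\mathbb{A}^N$ and its projective closure, these orbit subschemes share a common Hilbert polynomial $P$, and so they correspond to $K$-points of the Hilbert scheme $H:=\mathrm{Hilb}^P(X)$, a locally closed subscheme of the projective $\mathrm{Hilb}^P(\mathbb{P}^N)$ and hence of finite type over $K$. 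The automorphism $\sigma$ induces an automorphism of $H$ by $[Z]\mapsto[\sigma(Z)]$; the locus $W:=\{[Z]\in H:Z\subseteq V\}$ is Zariski closed in $H$; and with $[Y]\in H$ the $K$-point corresponding to $Y$, one has
$$
T=\{n\in\mathbb{Z}:\sigma^n([Y])\in W\}.
$$
Applying Theorem~\ref{thm:1} to the quadruple $(H,\sigma,[Y],W)$ then yields that $T$ has the two-sided SML property.

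\emph{Main obstacle.} The substantive work lies in the moduli-space step. One must construct $H$ rigorously as a finite-type $K$-scheme (choosing a polarization so that Hilbert polynomials are well defined for subschemes of the affine $X$) and verify that $[Z]\mapsto[\sigma(Z)]$ is a genuine scheme automorphism of $H$ rather than merely a bijection of $K$-points. One must also bridge the gap between Theorem~\ref{thm:1}, stated for affine varieties, and the quasi-projective $H$ appearing here; this can be handled either by covering $H$ by finitely many affine opens and splitting $\mathbb{Z}$ into residue classes so that the $\sigma$-orbit of $[Y]$ lies piece-by-piece in a single chart, or by invoking the quasi-projective strengthening of the $p$-adic arc theorem of Bell--Ghioca--Tucker mentioned in the introduction. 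As a hands-on alternative to this moduli-theoretic detour, one can extend Bell's original $p$-adic argument directly at the level of algebra automorphisms: pass to a power $\sigma^k$ congruent to the identity modulo a high power of $p$ on chosen generators of $A$, exponentiate to obtain a $p$-adic analytic family $\sigma^{kt}$ for $t\in\mathbb{Z}_p$, and then encode the condition $\sigma^{kt}(b)\in I$ as the vanishing of finitely many $p$-adic analytic functions using the Noetherianity of $A$ and Strassmann's theorem.
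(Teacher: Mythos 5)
You claim that, after fixing a closed immersion $X\hookrightarrow\mathbb{A}^N$ and a projective closure, ``these orbit subschemes share a common Hilbert polynomial $P$.'' This is false. Isomorphic closed subschemes of $\mathbb{P}^N$ have the same Hilbert polynomial only when the isomorphism extends to a linear automorphism of $\mathbb{P}^N$; a general polynomial automorphism of $\mathbb{A}^N$ does not preserve degree. Concretely, take $A=K[x,y]$ with the H\'enon-type automorphism $\sigma(x)=y$, $\sigma(y)=y^2-x$, and $I=(y)$ so that $Y=V(I)$ is a line. Then $\sigma^n(I)$ is principal, generated by a polynomial whose degree grows like $2^{|n|}$, so the closures $\overline{\sigma^n(Y)}\subset\mathbb{P}^2$ are curves of unbounded degree. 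Their Hilbert polynomials are pairwise distinct, and the points $[\sigma^n(Y)]$ do \emph{not} lie in a single finite-type piece $\mathrm{Hilb}^P$. There is no finite-type moduli scheme that captures the whole orbit of $Y$, so the intended application of Theorem~\ref{thm:1} never gets off the ground. (A secondary problem, which you do flag, is that even when $Y$ is zero-dimensional and the Hilbert scheme \emph{is} of finite type, it is quasi-projective rather than affine, so Theorem~\ref{thm:1} as stated does not apply; one would have to invoke the quasi-projective result of Bell--Ghioca--Tucker. But this is moot in the positive-dimensional case because of the degree growth above.)

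\textbf{The ``hands-on alternative'' is a gesture, not a proof.}  Your concluding sketch — pass to $\sigma^k\equiv\mathrm{id}\ (\mathrm{mod}\ p^m)$, exponentiate to a $p$-adic analytic family $\sigma^{kt}$, and apply Strassmann — describes the \emph{shape} of the paper's argument but elides every place where the real work happens. Producing the analytic family is precisely what the generalized $p$-adic analytic arc theorem (Theorem~\ref{thAA}) accomplishes, and the paper shows this requires a nontrivial new device: working in the non-Noetherian ring $\sRS$ of integer-valued polynomials and controlling degree growth through Lemma~\ref{le34}. Moreover, once the arc is constructed, Strassmann's theorem applies only when $A/I$ is finite-dimensional over $\QQ_p$; reducing the general case to this one is handled by Amitsur's lemma (Lemma~\ref{lem: amitsur}), which your sketch does not mention. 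Finally, the reduction from an arbitrary finitely generated $K$-algebra to a polynomial ring uses Srinivas' embedding theorem (Theorem~\ref{th61}), which is also absent. None of these steps is routine, and the paper's introduction explicitly warns that the difficulty of the theorem lies exactly in the non-maximal, non-radical ideal case — the case your geometric reduction would have collapsed onto Theorem~\ref{thm:1}, which handles only points landing on radical closed sets.

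\textbf{What is salvageable.}  Your preliminary reductions (to $J$ principal via closure of the two-sided SML class under finite intersection, and the equivalence $\sigma^n(I)\supseteq(b)\Leftrightarrow\sigma^{-n}(b)\in I$) are correct and are also implicitly present in the paper's Proposition~\ref{pr47}, which treats the generators $P_1,\ldots,P_m$ of $J$ one at a time. But the central idea — parametrizing the orbit of $Y$ by a finite-type scheme and applying the geometric SML theorem there — cannot work in the generality of Theorem~\ref{thm:main}.
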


The two-sided SML property  of  this result also holds for
 the set of integers $n$ for which  $\sigma^n(J)\subseteq I$,
because  this condition holds if and only if $\sigma^{-n}(I)\supseteq J$, 
where $ \sigma^{-1}(I) = \{ b: \sigma(b) \in I\}$; apply Theorem \ref{thm:main}
 to the automorphism $\sigma^{-1}$.

Theorem \ref{thm:main}  implies Theorem \ref{thm:1} and hence
 all the earlier forms of the SML theorem stated above.
In  the special case that   $I$ is a maximal ideal,
 Theorem \ref{thm:main}  is itself deducible from the algebraic form of Theorem \ref{thm:1}.  
The  main content of Theorem \ref{thm:main}, and also the source of the difficulty  in proving it, revolves around relaxing the maximality condition on $I$.
 For example, if $A=\mathbb{C}[x,y]$, then a polynomial $p(x,y)\in A$ is in the maximal ideal $(x,y)$ if and only if $p(0,0)=0$. 
  On the other hand, $p(x,y)$ is in the ideal $ (x,y^2)$ if and only if $p(0,0)=0$ and $\partial{p(x,y)}/\partial{y}$ vanishes at $(0,0)$. 
    In this sense,  the ideal $I=(x,y^2)$ encodes additional ``infinitesimal'' information.  
    It seems easier to understand the behavior of the iterates of an element $p(x,y)$ of $A$ under an automorphism $\sigma$ with 
    respect to evaluation at a given point than to understand how the iterates behave with respect to more complicated conditions 
    involving the vanishing of partial derivatives at a point.  This difficulty and, more generally, 
    the possible occurrence of nil ideals and non-radical ideals in $A$ adds an extra level of complication to the problem.
     (see Examples \ref{exam2} and \ref{exam3}).
    
    In Section \ref{sec6} we 
 examples showing that this result is strictly more general than Theorem \ref{thm:1}.
In particular, there exist two ideals $I_1, I_2$ having the same radical ideal $I = \sqrt{I_1}= \sqrt{I_2}$
and $J$ such the sets $S(I_1, J) \ne S(I_2, J)$, see \ref{exam1}.
  
  As with previous work, our method  to prove Theorem \ref{thm:main} makes use of  a $p$-adic result.
 We present a $p$-adic interpolation result that  strengthens the  
  $p$-adic analytic arc theorem established in  \cite{Be06b}, which we call the 
 {\em  generalized $p$-adic analytic arc theorem}  (Theorem \ref{thAA}).
This result concerns polynomial automorphisms  $\sigma: \ZZ_p[x_1, \ldots, x_d] \to \ZZ_p[x_1, \ldots, x_d]$ 
and applies  for $p \ge 5$.  It asserts that for 
a   $\ZZ_p$-algebra $S$ that is finitely generated and is a torsion-free $\ZZ_p$-module, the induced (nonlinear) map 
$f_{\sigma} :S^d\to S^d$ has the property that 
 iterates of a suitable initial point ${\bf s}=(s_1,\ldots ,s_d)\in S^d$ under $\sigma$
 can be embedded in a $p$-adic analytic arc.  
 Note that $S$ is a free $\ZZ_p$-module of finite rank $r>0$, and one might 
 think that this extension can be obtained by fixing a 
 $\ZZ_p$-module isomorphism between $S$ and $(\ZZ_p)^r$ and then applying the 
  analytic arc theorem of \cite{Be06} separately to each coordinate.  
  This is not the case, however, as application of the map $\sigma$
 involves expressions that include combinations of elements from different coordinates.
 Our innovation to  get around this obstacle consists of  working with a larger ring of functions than usual; 
 namely, the subset $\sRS$ of $S \otimes_{\ZZ_p} \QQ_p[z] $ that
 consists of all polynomials which map $\ZZ_p$ into $S$, and 
 obtaining analytic maps via successive approximations via functions in this ring.
 The commutative ring $\sRS$ is rather pathological, it is not Noetherian and is not of
 finite type over $\ZZ_p$.
 Our  proof requires  establishing a nice  property of  certain subalgebras of $\sRS$,  given in Lemma~\ref{le34}.
 As in the case of the $p$-adic analytic arc theorem  treated in
 \cite{Be06},  the generalized $p$-adic analytic arc theorem  fails to hold for $p=2$, 
  and its truth for $p=3$ remains open.

  The proof of  Theorem \ref{thm:main}  is  
  commutative algebraic, aside from the 
  $p$-adic interpolation result.
    A key idea is to first establish a result  for the ring $A= \ZZ_p[x_1, \ldots , x_d]$,
  treating  only for  the special  case that $I ,J$ are  $p$-reduced ideals  (Definition \ref{def42}) when $p \ge 5$ (Theorem \ref{th41}).
  The $p$-reduced condition 
  is needed in order to apply  the generalized $p$-adic analytic arc
  theorem. Here we also use an idea of Amitsur (Lemma \ref{lem: amitsur}), to handle the case of $p$-reduced ideals $I$
   where $(A/I )\otimes \QQ_p$ is an infinite-dimensional $\QQ_p$-vector space.
We next deduce  the result for arbitrary ideals $I, J$ in a polynomial ring $A=K[x_1, \ldots , x_d]$ over
  a field  of characteristic $0$
  (Theorem \ref{thm: polycase}), 
  as follows.  We  first show that 
  $K$ can be taken to be finitely generated over $\QQ$; next,  we use the fact that
  such a field embeds
    into infinitely many $p$-adic fields $\QQ_p$ via the Chebotarev density theorem; finally, 
  passing from $\QQ_p$ to $\ZZ_p$-coefficients, we show  that one can pass  from ideals to suitable
  $p$-reduced ideals.    
  We then treat the general case that $A$ is a finitely-generated commutative $K$-algebra, by  using a theorem of 
   Srinivas \cite[Theorem 2]{Sr91} to reduce this case  to that  of a polynomial ring $K[x_1, \ldots, x_d]$.
   
   An interesting feature of this proof is that  while Theorem \ref{thm:main} 
   concerns finitely-generated commutative $K$-algebras $A$
   which are Noetherian rings,  the proof  itself currently requires a detour using  the non-Noetherian ring $\sRS$.
   %
 
\subsection{Affine scheme version of main result}\label{sec12new}
We can formulate the main result  in the category of affine schemes. 
 There is a standard correspondence of categories
$$
\left\{{\rm Affine ~ Schemes}\right\} \longleftrightarrow \left\{ {\rm Commutative~rings~with ~identity}\right\}.
$$
and  using it Theorem \ref{thm:main} may be restated as follows. 
%
%

 \begin{thm} \label{th-scheme}
 {\em Generalized SML theorem for  affine schemes of finite type)}
Let $K$ be a field of characteristic zero and let $X$ be an affine scheme of finite type over $K$.
   If $\Es \in {\rm Aut}_K(X)$ and $Y$ and $Z$ are closed subschemes of $X$, then 
$$S= S(Z,Y)  := \{n \in \mathbb{Z} ~:~ \Es^{n}(Z)\subseteq Y\}$$ 
has the two-sided SML property.
\end{thm}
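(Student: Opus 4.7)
The plan is to reduce Theorem \ref{th-scheme} directly to Theorem \ref{thm:main} via the standard anti-equivalence between affine schemes of finite type over $K$ and finitely generated commutative $K$-algebras. Write $A$ for the coordinate ring of $X$; because $X$ is of finite type over $K$, the ring $A$ is a finitely generated commutative $K$-algebra. The automorphism $\sigma \in \mathrm{Aut}_K(X)$ corresponds to a $K$-algebra automorphism $\varphi := \sigma^{\ast} : A \to A$, and the closed subschemes $Y$ and $Z$ are of the form $V(I)$ and $V(J)$ for uniquely determined ideals $I, J \subseteq A$.

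The key step is to rewrite the scheme-theoretic condition $\sigma^n(Z) \subseteq Y$ as an ideal inclusion in $A$. The restriction $\sigma|_Z \colon Z \hookrightarrow X \xrightarrow{\sigma} X$ corresponds on rings to the composite $A \xrightarrow{\varphi} A \twoheadrightarrow A/J$, whose kernel is $\varphi^{-1}(J)$; hence the scheme-theoretic image of $Z$ under $\sigma$ equals $V(\varphi^{-1}(J))$. Since $\varphi$ is an automorphism, iteration gives $\sigma^n(Z) = V(\varphi^{-n}(J))$ for every $n \in \mathbb{Z}$, and the containment $V(\varphi^{-n}(J)) \subseteq V(I)$ of closed subschemes translates to the reverse inclusion of defining ideals $\varphi^{-n}(J) \supseteq I$, equivalently $\varphi^n(I) \subseteq J$. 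Thus
\[
S(Z,Y) = \bigl\{\, n \in \mathbb{Z} \,:\, \varphi^n(I) \subseteq J \,\bigr\}.
\]

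To finish, I will invoke the variant of Theorem \ref{thm:main} recorded in the remark immediately following its statement, taking the ideals there to be $I$ and $J$ (in the roles played by $J'$ and $I'$ in that remark) and the automorphism to be $\varphi$; equivalently, one may apply Theorem \ref{thm:main} itself to the automorphism $\varphi^{-1}$ and the ideals $J$ and $I$, using the identity $\{\,n : \varphi^n(I) \subseteq J\,\} = \{\,n : (\varphi^{-1})^n(J) \supseteq I\,\}$ together with the fact that the two-sided SML property is preserved under the involution $n \mapsto -n$. Either route yields that $S(Z,Y)$ has the two-sided SML property. I anticipate no genuine obstacle in executing this plan: the substantive mathematics is confined to Theorem \ref{thm:main}, and the only care required is bookkeeping about the direction of the affine-schemes-to-commutative-rings correspondence and about the distinction between pushing a subscheme forward under $\sigma$ and pulling its defining ideal back under $\varphi$.
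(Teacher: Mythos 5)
Your proposal is correct and follows exactly the route the paper intends: the paper treats Theorem \ref{th-scheme} as a restatement of Theorem \ref{thm:main} under the anti-equivalence between affine schemes of finite type over $K$ and finitely generated commutative $K$-algebras, and your write-up supplies the bookkeeping (translating $\sigma^n(Z)\subseteq Y$ into $\varphi^n(I)\subseteq J$, then invoking the remark immediately after Theorem \ref{thm:main}). One cosmetic caveat: the paper's informal discussion in Section \ref{sec11} uses $\phi^\star$ somewhat loosely (sometimes for the pullback homomorphism, sometimes implicitly for its effect on ideals), so your care in distinguishing pushing a subscheme forward from pulling its ideal back under $\varphi$ is warranted and resolves the direction question correctly; either orientation of the resulting inclusion is fine anyway, since the two-sided SML property is invariant under $n\mapsto -n$.
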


From the scheme-theoretic viewpoint
Theorem \ref{thm:1} corresponds to the special case that $Y$ and $Z$ are 
reduced 
closed subschemes and $Z=\{ x\}$ is a point. 
The main difficulty  in establishing Theorem \ref{th-scheme} is to omit the requirement that
the source  $Z$ and target $Y$ be reduced,
while allowing a source $Z$ of arbitrary dimension is less of a difficulty.

We show in Section \ref{sec6} via examples  that for automorphisms one cannot  
generalize this result to arbitrary affine schemes without
altering the conclusion that  the set is a  two-sided SML set.
Example \ref{exam4} 
shows the need  for the scheme to be of finite type over $K$, while 
example \ref{exam5} shows the need to work over a field $K$ of characteristic zero.
There remains  a possibility of generalizing the result to arbitrary schemes of finite type 
over characteristic
zero fields.

%
%

\subsection{Generalizations}\label{sec12a}

The ultimate level of generalization possible for  mappings having the SML property
is not clear. 

A natural question is:  ``Is there an SML theorem for endomorphisms
of algebraic varieties?"  
For endomorphisms  that are not automorphisms, the maps $\Et^m$ with $m<0$ are not defined, and 
extensions of Theorem ~\ref{thm:main}  to endomorphisms must be formulated in terms of
 one-sided infinite arithmetic progressions.  
  Recent work described below shows that the SML property does hold  for some 
  classes of endomorphisms at the geometric level, while endomorphisms at the algebraic level
  have not yet been studied.  At present in  the characteristic $0$ case no counterexamples to the 
  (one-sided) SML property are known
 for any endomorphism at either   the geometric and algebraic level.

The  dynamics of endomorphisms in the geometric setting
is currently  a very  active area of study.
 In 2009 Ghioca and Tucker \cite{GT09} 
conjectured that 
the forward iterates of a point $P$ under an endomorphism
 $\tau: X \to X$ of a quasiprojective variety over $\CC$ should satisfy
the SMLproperty for intersecting a closed subvariety $V$.
  They term this assertion the
{\em dynamical Mordell-Lang conjecture}. This conjecture  fits in the general framework of dynamical conjectures of
S. Zhang \cite[Sect. 4]{Zha06}.  
It  is  now known that one-sided versions
of the Skolem-Mahler-Lech theorem are valid for some special classes of  endomorphisms---see for example
 Benedetto, Ghioca,  Kurlberg and Tucker \cite{BGKT12}.
 The  dynamical Mordell-Lang conjecture  currently appears to be
difficult in the general case.

 When endomorphisms are viewed at the ideal level, as in this paper, 
 there are two different questions to consider:
 the first concerns upward inclusions
and the second concerns  downward inclusions of
ideals under preimages of 
endomorphisms of polynomial rings. 
Given an  endomorphism $\Et: A \to A$ of a commutative $K$-algebra $A$, 
 and  a nonzero ideal $I$ of $A$ the preimages $\Et^{-1}(I)$
 are ideals of $A$,
but the image $\Et(I)$ is 
usually not an ideal of $A$ but is  an ideal of the image ring $A':= \Et(A)$.
To formulate   inclusions purely in terms of $A$-ideals, these  concern the sets 
\[
S(I,J):= \{m \ge 0~|~I \supseteq (\Et^m)^{-1}(J) \}
\]
and
\[
S'(I,J) := \{m \ge 0~|~(\Et^m)^{-1}(I) \supseteq J\},
\]
respectively.

Our methods in this paper do not give information about  either of the sets $S(I,J)$ and $S'(I, J)$.
However a  number of results in this paper partially  extend to the endomorphism case, see
Remarks \ref{rem22} and \ref{rem36b}. 
New ideas are certainly needed for general endomorphisms, 
because the  generalized analytic arc theorem
is known not to hold  in the $p$-adic case near  some superattracting fixed points.\\

\paragraph{\bf Acknowledgments.} We thank Harm Derksen
for comments on the nonzero characteristic case and Brian Conrad
for comments on the scheme theory viewpoint. We thank  
 the reviewer for  helpful comments.

%
%
%

\section{$p$-Adic Preliminaries}\label{sec2}

Much of our study of automorphisms of algebras relies on reductions to simpler cases. 
We will eventually 
 reduce the proof of Theorem \ref{thm:main} to the special case that one is working with a 
 polynomial ring over a $p$-adic field.
 
 For the analysis of this special case  we establish
  preliminary results on Jacobian matrices of $p$-adic mappings and some results from $p$-adic analysis.
  Section \ref{sec21} allows $S$-algebra endomorphisms, while all  later sections specialize
  to the automorphism case. 
    
 %
%
%
%
\subsection{Evaluation maps and Jacobians for polynomial endomorphicms}\label{sec21}

Let 
$R=S[x_1, \ldots, x_d]$ be a polynomial ring over an integral
domain $S$. Any  $S$-algebra endomorphism $\Et: R \to R$
 is uniquely
determined by its values $\{\Et(x_i): 1 \le i \le d\}$,
on the monomials $x_i$, and any assignment of  values 
\[
\Et(x_i) := F_i(x_1,\ldots, x_d) \in S[x_1, \ldots, x_d], \quad 1 \le i \le d,
\]
uniquely extends to an  endomorphism $\Et: R \to R$
that acts as the identity on $S$,  given by
\begin{equation}\label{201}
\Et(P(x_1, \ldots , x_d)) := P(\Et(x_1),\ldots, \Et(x_d)) \in S[x_1, \ldots, x_d].
\end{equation}
Composition of endomorphisms will be denoted $\tau_2 \circ \tau_1(P) := \tau_2(\tau_1(P)).$
In what follows  we use   $\Et$ to denote a  general $S$-algebra endomorphism, 
while symbols $\Es, \tauu$ are reserved for $S$-algebra automorphisms.

 The {\em Jacobian matrix} 
$J(\Et; \bx) \in M_{d \times d}(S[x_1,\ldots,x_d])$ of the map $\Et$ at $\bx$ is given by
\[
J(\Et; \bx):=
\left[ \begin{array}{cccc} 
\frac{\partial}{\partial x_1} F_1 &  \frac{\partial}{\partial x_2} F_1
&  \cdots & \frac{\partial}{\partial x_d} F_1 \\
\frac{\partial}{\partial x_1} F_2 & \frac{\partial}{\partial x_2} F_2 
& \cdots & 
\frac{\partial}{\partial x_d} F_2 \\
& \cdots & \cdots &  \\
\frac{\partial}{\partial x_1} F_d & \frac{\partial}{\partial x_2} F_2 
& \cdots & 
\frac{\partial}{\partial x_d} F_d\\
\end{array} \right].
\]
These matrices satisfy the polynomial identity under composition
\[
J(\Et_2 \circ \Et_1; \bx)= J(\Et_2; \Et_1(\bx)) J(\Et_1; \bx),
\]
in the ring $M_{d \times d}(S[x_1,\ldots,x_d])$.

For $\bs=(s_1, s_2, \ldots, s_d) \in S^d$,
we define the {\em evaluation map} $\ev_{\bs}: S[x_1, x_2, \ldots, x_d] \to S$, which  assigns
$x_i \mapsto s_i$, i.e., 
\[
\ev_{\bs}(F)(x_1, \ldots,x_d) := F(s_1, s_2, \ldots,s_d).
\]
(For constants $c \in S$ we have $\ev_{\bs}(c)(x_1, .., x_d) = c.$)
Using this map, we define 
the Jacobian matrix of a 
polynomial map  $\Et$ with its entries evaluated at the point 
${\bf s}\in S^d$ as:
$$
J(\Et; {\bf s}) := \ev_{\bf s}(J(\Et; \bx)) = \left[ \ev_{\bf s} \Big(\frac{\partial}{\partial x_{j}}F_i(x_1, \ldots , x_d)\Big)\right]_{1 \le i, j \le d}, 
$$ 
with   $J(\Et; {\bf s}) \in M_{d \times d}(S)$.

In the same vein, given an endomorphism $\Et$ of $S[x_1, \ldots ,x_d]$ 
we obtain a 
map 
$f_{\Et}: S^d \to S^d$ defined for each $\bs \in S^d$, acting coordinatewise,  by
\[
f_{\Et}(\bs)
:= \ev_{\bs}(\Et(x_1), \Et(x_2), \cdots, \Et(x_d)) 
=(F_1(s_1,\ldots,s_d), F_2(s_1,\ldots, s_d), \ldots, F_d(s_1, \ldots,s_d)).
\]
We call $f_{\Et}$ the {\em dynamical evaluation map} 
associated to  $\Et$.  
The important property it has   is compatibility  with  composition of maps, given as:
\begin{equation}~\label{eq429}
f_{\Et_2 \circ \Et_1}(\bs) = 
(f_{\Et_2} \circ f_{\Et_1})(\bs).
\end{equation}
In particular, for  iteration of the map $f_{\Et}$
one has 
$f_{\Et^{m}}= (f_{\Et})^{m}.$ A consequence 
of this compatibility is that if  the endomorphism is an automorphism $\sigma$, then 
 the dynamical evaluation map $f_{\sigma}: S^d \to S^d$ is a bijection, because
$f_{\sigma} \circ f_{\sigma^{-1}} = f_{id}$,  the identity map. 
A second consequence 
is the identity
\begin{equation}
J(\Et_2 \circ \Et_1; {\bf s})= J(\Et_2; f_{\Et_1}({\bf s})) J(\Et_1; {\bf s}),  \quad  {\bf s} \in S
\end{equation}
in  the ring $M_{d \times d}(S)$.

The dynamical evaluation map $f_{\tau}$ is analogous to a map
acting an affine variety as in Theorem \ref{thm:1}.
In general it  
is a nonlinear map, and it usually  does not 
respect either addition or coordinatewise
 multiplication on $S^d$, i.e. one may have 
 $f_{\Et}(\bs_1 + \bs_2) \ne f_{\Et}(\bs_1) + f_{\Et}(\bs_2)$
and $f_{\Et}(\bs_1 \bs_2) \ne f_{\Et}(\bs_1) f_{\Et}(\bs_2)$.
Furthermore one may have $f_{\Et}({\bf 0}) \ne {\bf 0}$, where ${\bf 0}:=(0,0, .., 0) \in S^d.$

The above  definitions  are stated for endomorphisms $\tau$  
but in the remainder of the paper we will restrict to the case 
of automorphisms $\sigma$, unless specified otherwise.
 In  the Appendix to this paper we prove a result that clarifies the
  differences between the geometric and algebraic view of
dynamics of endomorphisms $\tau$ acting on a variety (Proposition \ref{prop:bugs}).

%
%
%
%
\subsection{$p$-adic approximate fixed points for automorphisms}\label{sec22}
\setcounter{equation}{0}

Now we specialize to the case that the  integral domain $S$ is a $\ZZ_p$-algebra.
We consider an  automorphism $\sigma$ of $R = S[x_1, \ldots , x_d]$.
and study the dynamical evaluation map $f_{\sigma}$.
The following lemma asserts the existence of approximate fixed points
$(\bmod \, p)$  with invertible Jacobian matrices for dynamical evaluation maps of 
polynomial automorphisms.\smallskip


\begin{lem} ~\label{lem: j}
Let $p$ be a prime, let $S$ be a $\bZ$-algebra 
that is finitely generated as a $\bZ$-module and let
$ {\sigma}=(F_1,\ldots ,F_d):\bZ[x_1, \ldots, x_d]\rightarrow 
\bZ[x_1, \ldots, x_d]$ be an $\bZ$-algebra automorphism.  Then there is an integer $m$
such that for every point $\bs_0 = (s_1,\ldots ,s_d)\in S^{d}$ with the property
that  the dynamical evaluation  map $f_{\sigma}: S^d \to S^d$
at 
$\bs_0$ satisfies
\[
f_{\sigma}(s_1,\ldots ,s_d)\equiv (s_1,\ldots ,s_d) ~~(\bmod~pS),
\]
the Jacobian $J(\sigma^m; \bx)$ evaluated at
$(x_1, x_2, \ldots, x_d)= (s_1, \ldots, s_d)$ 
is the identity matrix $(\bmod \, pS)$; that is,
$\ev_{\bs_{0}}(J(\sigma^m; \bx))\equiv \,I (\bmod~pS)$.
\end{lem}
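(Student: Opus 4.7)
The plan is to reduce modulo $p$ and exploit the fact that $\bar S := S/pS$ is a finite commutative ring. Since $S$ is finitely generated as a $\ZZ_p$-module, $\bar S$ is a finite $\FF_p$-algebra; in particular, $\bar S^d$ is a finite set and $GL_d(\bar S)$ is a finite group. Reducing the hypothesis $f_\sigma(\bs_0)\equiv \bs_0 \pmod{pS}$ shows that $\bar\bs_0$ is a genuine fixed point of the induced dynamical evaluation map $f_{\bar\sigma}$ on $\bar S^d$.

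First I would verify that the Jacobian $J(\sigma;\bar\bs_0)$ lies in $GL_d(\bar S)$. Apply the chain-rule identity stated just before the lemma to the composition $\sigma^{-1}\circ \sigma = \mathrm{id}$ at the point $\bs_0$: this gives
\[
I = J(\mathrm{id};\bs_0) = J(\sigma^{-1}; f_\sigma(\bs_0))\, J(\sigma;\bs_0).
\]
Reducing mod $pS$ and using $f_\sigma(\bs_0)\equiv \bs_0$ yields
\[
J(\sigma^{-1};\bar\bs_0)\,J(\sigma;\bar\bs_0) \equiv I \pmod{pS},
\]
so $J(\sigma;\bar\bs_0)$ is invertible in $M_{d\times d}(\bar S)$, i.e.\ an element of $GL_d(\bar S)$.

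Next I would iterate the chain rule to turn $J(\sigma^m;\bar\bs_0)$ into a pure $m$-th power. Inductively,
\[
J(\sigma^m;\bar\bs_0) \equiv J(\sigma; f_{\sigma^{m-1}}(\bar\bs_0))\cdots J(\sigma; f_\sigma(\bar\bs_0))\, J(\sigma;\bar\bs_0)\pmod{pS},
\]
and because $\bar\bs_0$ is a fixed point of $f_{\bar\sigma}$ we have $f_{\sigma^k}(\bar\bs_0)\equiv \bar\bs_0 \pmod{pS}$ for every $k\ge 0$. Hence
\[
J(\sigma^m;\bar\bs_0) \equiv J(\sigma;\bar\bs_0)^{m} \pmod{pS}.
\]

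Finally, choose $m$ to be a common exponent for the group $GL_d(\bar S)$, for instance $m := |GL_d(\bar S)|$; this is a finite integer depending only on $\sigma$ (through the finite ring $\bar S$), not on the particular approximate fixed point. By Lagrange's theorem any element of $GL_d(\bar S)$, including $J(\sigma;\bar\bs_0)$, raised to this power equals $I$, and so $\ev_{\bs_0}(J(\sigma^m;\bx))\equiv I \pmod{pS}$ for every approximate fixed point $\bs_0$ simultaneously. The only subtlety is the uniformity of $m$ across all $\bs_0$, and this is handled cleanly because the ambient group $GL_d(\bar S)$ is finite; no further obstacle appears.
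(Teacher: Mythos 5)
Your proof is correct and takes essentially the same approach as the paper: reduce mod $p$ to work in the finite ring $S/pS$, use the chain-rule identity to write $J(\sigma^m;\bs_0)$ as the $m$-th power $J(\sigma;\bs_0)^m$ at an approximate fixed point, and take $m=|{\rm GL}_d(S/pS)|$. The only cosmetic difference is that you spell out the invertibility of $J(\sigma;\bs_0)$ mod $p$ via the chain rule applied to $\sigma^{-1}\circ\sigma=\mathrm{id}$, whereas the paper simply asserts it.
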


 \noindent \begin{proof} The ring $S/pS$ is a finite 
ring since $S$ is a finitely generated $\bZ$-module.  Furthermore,
$J(\sigma; {\bf s})$ has inverse mod $p$ given by $J(\sigma^{-1}; {\bf s})$ and hence it is invertible mod $p$.  Take 
 $m$ to be the order of ${\rm GL}_d(S/pS)$. 
We let
$$
J(\sigma; {\bf s}) := \ev_{\bf s}(J(\sigma; \bx)) \in M_{d \times d}(S)
$$ 
denote  the Jacobian matrix of a 
polynomial map  $\Es$ with its entries evaluated at the point 
${\bf s}\in S^d$.  
Since $\Es$ is a $\bZ$-algebra automorphism,
we have for $\bs_1, \bs_2 \in S^d$
that 
\[
\bs_1 \equiv \bs_2 ~(\bmod~pS) \Rightarrow
f_{\sigma}(\bs_1) \equiv f_{\sigma}(\bs_2)~~(\bmod~pS).
\]
It follows  that the quotient map $\bar{f}_{\sigma}: (S/pS)^d \to (S/pS)^d$
is well-defined.
Similarly, since  $J(\sigma; \bx)$ has entries given by 
polynomials with coefficients in $\bZ$, then
\[ 
\bs_1 \equiv \bs_2 ~(\bmod~ pS) \Rightarrow
J(\sigma; \bs_1) \equiv J(\sigma; \bs_2) ~(\bmod~pS),
\]
where $\equiv$ is taken entry-wise.

Then for any point 
${\bf s}=(s_1,\ldots ,s_d)\in S^d$ we
have 
\begin{eqnarray*}
J(\sigma^m;{\bf s}) & = & 
J(\sigma;f_{\sigma^{m-1}}({\bf s}))J(\sigma;f_{\sigma^{m-2}}({\bf s}))\cdots
J(\sigma;{\bf s}) \\
& = & 
J(\sigma;(f_{\sigma})^{m-1}({\bf s}))
J(\sigma;(f_{\sigma})^{m-2}({\bf s}))\cdots
J(\sigma;{\bf s}).
\end{eqnarray*}
Hence if $\bs_0$ is a fixed point of the quotient map $\bar{f}_{\sigma}$,
then
\[
(f_{\sigma})^j (\bs_0) \equiv \bs_0 ~(\bmod ~pS),
\]
for all $j \ge 1$, so that 
\[
J(\sigma;(f_{\sigma})^j({\bf s_0}))\equiv J(\sigma;{\bf s_0})~(\bmod ~pS).
\]
Substituting these in the formula above yields
\[
J(\sigma^m;{\bf s_0})\equiv J(\sigma;{\bf s_0})^m ~(\bmod~ pS).
\]  
By our choice of $m$,
$M :=J(\sigma^m;{\bf s}_0)$ is congruent to the identity mod $pS$,
as required. $~~~$
 \end{proof} 
\vskip 1mm
\noindent


\begin{rem} \label{rem22}
{\em 
The argument  of Lemma\, \ref{lem: j} 
extends to $\ZZ_p$-algebra endomorphisms $\tau$, but  yields only the weaker conclusion:}
  there exists  a fixed point $\bs_0$ of the evaluation function $f_{\tau} ~(\bmod~ pS)$
at which the Jacobian $\ev_{\bs_{0}}(J(\tau^m; \bx))\equiv M (\bmod~pS)$
where $M^2=M$  is an idempotent  in $M_{d \times d}(S/pS)$.
\end{rem}

%
%
%
%
\subsection{$p$-adic reduction lemmas}\label{sec23}

 We start with an embedding theorem 
due to Lech \cite[\S 4--5]{Le53}.
His result can be regarded as a
$p$-adic analogue of the Lefschetz principle.\medskip

\begin{lem}~\label{le31}  
Let $K$
be a finitely generated
extension of $\mathbb{Q}$ 
and
let $\mathcal{S}$ be a finite subset of
$K$.  Then there exist infinitely many primes $p$ such that 
$K$ embeds in $\mathbb{Q}_p$; moreover, for all but
finitely many of these primes 
every nonzero element of $\mathcal{S}$ 
is sent to a unit in $\mathbb{Z}_p$.
\end{lem}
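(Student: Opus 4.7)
My plan is to build, for infinitely many primes $p$, an embedding $\phi\colon K\hookrightarrow \mathbb{Q}_p$ that sends each nonzero $s\in\mathcal{S}$ to a $\mathbb{Z}_p$-unit. The argument has three layers: use Chebotarev to embed the algebraic core of $K$, use uncountability of $\mathbb{Z}_p$ to specialize a transcendence basis to algebraically independent $p$-adic units, and use Hensel's lemma to lift an algebraic generator.

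Let $F$ be the algebraic closure of $\mathbb{Q}$ in $K$, a number field. Fix a transcendence basis $t_1,\ldots,t_n$ of $K$ over $F$ and a primitive element $\alpha$, giving $K=F(t_1,\ldots,t_n)(\alpha)$, and take a polynomial $f(T,X)\in\mathcal{O}_F[T,X]$ irreducible over $F(T)$ with $f(t,\alpha)=0$. Regularity of $K/F$, which follows from $F$ being algebraically closed in $K$, upgrades irreducibility of $f$ to absolute irreducibility over $F$. For each nonzero $s\in\mathcal{S}$ write $s=P_s(t,\alpha)/Q_s(t)$ with $P_s\in\mathcal{O}_F[T,X]$ and $Q_s\in\mathcal{O}_F[T]\setminus\{0\}$, and let $\widetilde N_s(T)\in\mathcal{O}_F[T]\setminus\{0\}$ be obtained from the nonzero field norm $N_{K/F(T)}(P_s(T,\alpha))$ by clearing denominators with a power of the leading coefficient of $f$ in $X$. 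Let $\Delta(T)\in\mathcal{O}_F[T]$ be the product of $T_1\cdots T_n$, the leading coefficient of $f$ in $X$, $\mathrm{disc}_X(f)$, all $Q_s$, and all $\widetilde N_s$; this is nonzero and encodes the ``good'' specialization locus.

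By Chebotarev, infinitely many rational primes $p$ split completely in $F$, yielding embeddings $F\hookrightarrow\mathbb{Q}_p$ with residue field $\mathbb{F}_p$. The scheme $Y=\{f=0\}\cap\{\Delta\ne 0\}\subset\mathbb{A}^{n+1}_{\mathcal{O}_F}$ has a geometrically integral generic fiber of dimension $n$; spreading-out then makes its closed fiber over all but finitely many such $p$ also geometrically integral, so by the Lang--Weil estimate $Y$ acquires an $\mathbb{F}_p$-point $(\bar\tau_1,\ldots,\bar\tau_n,\bar\beta)$ for every sufficiently large split $p$. Because $\bar\Delta(\bar\tau)\ne 0$, each $\bar\tau_i$ is a unit mod $p$ and $\bar\beta$ is a simple root of $\bar f(\bar\tau,X)$. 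Inductively pick $\tau_i\in\bar\tau_i+p\mathbb{Z}_p\subset\mathbb{Z}_p^\times$ algebraically independent over $\mathbb{Q}$ (feasible because each coset is uncountable while the algebraic locus over the previously chosen $\tau_j$'s is countable), and use Hensel's lemma to produce $\beta\in\mathbb{Z}_p$ with $f(\tau,\beta)=0$ and $\beta\equiv\bar\beta\pmod p$. Define $\phi$ by $\phi|_F$ the Chebotarev embedding, $t_i\mapsto\tau_i$, and $\alpha\mapsto\beta$: algebraic independence of the $\tau_i$ gives an $F$-isomorphism $F(T)\cong F(\tau)$ carrying $f(T,X)$ to $f(\tau,X)$, so the latter remains irreducible and $\phi$ is a well-defined field embedding. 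For nonzero $s\in\mathcal{S}$ the values $Q_s(\tau)$ and $\widetilde N_s(\tau)$ are units since $Q_s,\widetilde N_s\mid\Delta$; the product $\prod_{\beta'}P_s(\tau,\beta')$ over the $\overline{\mathbb{Q}_p}$-roots $\beta'$ of $f(\tau,X)$ equals $\widetilde N_s(\tau)$ up to a unit factor, and since each $\beta'$ is integral over $\mathbb{Z}_p$, each $P_s(\tau,\beta')$ is an algebraic integer; a product of algebraic integers that equals a unit has every factor a unit, so $P_s(\tau,\beta)\in\mathbb{Z}_p^\times$ and hence $\phi(s)\in\mathbb{Z}_p^\times$.

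The main obstacle I anticipate is the combination of spreading-out and Lang--Weil needed to guarantee $Y(\mathbb{F}_p)\ne\emptyset$: one must verify that the absolute irreducibility of $f$ on the generic fiber persists on almost all closed fibers, which is standard but not trivial, relying either on Grothendieck's constructibility of irreducible components in a flat family or on a concrete resultant argument. The remaining steps are routine $p$-adic and commutative-algebra bookkeeping, with the subtlest ancillary ingredient being the inductive transcendence choice of $\tau_i$ inside prescribed cosets, which rests essentially on the uncountability of $\mathbb{Z}_p$.
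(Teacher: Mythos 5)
Your argument is correct, and it is a genuine proof. Note, however, that the paper offers no proof of Lemma~\ref{le31}: it simply cites [Be06, Lemma~3.1], which in turn goes back to Lech's 1953 paper (the statement is essentially the ``Embedding Theorem'' in Cassels' \emph{Local Fields}), so there is no in-paper argument to compare against line by line. Your route differs from the classical one in a meaningful way: you first isolate the relative algebraic closure $F$ of $\mathbb{Q}$ in $K$ and apply Chebotarev only to split $F$; you then exploit regularity of $K/F$ to make $f$ absolutely irreducible, spread this out over $\mathcal{O}_F$, and invoke Lang--Weil to produce an $\mathbb{F}_p$-point of the good open locus $Y$; finally you use uncountability of $\mathbb{Z}_p$ to push the $\tau_i$ into prescribed residue cosets while keeping them algebraically independent, and Hensel's lemma to lift. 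The classical proof instead uses Noether normalization over $\mathbb{Z}[1/m]$ and a more elementary specialization count, entirely avoiding Lang--Weil; your version is heavier but has the virtue of making the unit condition on $\mathcal{S}$ fall out cleanly by building the norm polynomials $\widetilde N_s$ (and the leading coefficient of $f$ in $X$) into $\Delta$, so that the ``all-but-finitely-many'' clause is handled in one stroke. One base case you should flag explicitly: when the transcendence degree $n$ is $0$, your $Y$ is zero-dimensional and the Lang--Weil step is vacuous; but then $K=F$ is a number field and Chebotarev alone finishes, so the argument degenerates gracefully but the reader should be told.
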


\begin{proof} This is shown  in  \cite[Lemma 3.1]{Be06}. 
The  Chebotarev density theorem is used
to show there exists  a positive density set of primes $p$ having the
required  property. $~~~$
\end{proof}
\vskip 2mm

Strassman's theorem \cite{St28}  asserts that  if a power series 
$f\in \mathbb{Q}_p[[z]]$  converges in the closed $p$-adic unit disc
$$
B_{\QQ_p}(0; 1) := \{ z \in \QQ_p:~ |z|_p \le 1 \} = \ZZ_p.
$$
and has infinitely many zeros in this disc, then it is  identically zero.
We will use the following variant of Strassman's theorem.\\
\begin{thm}~\label{thm: Strassman} {\rm (Extended Strassman's Theorem)}
Let $p$ be a prime and let $R$ be a finite-dimensional $\bQ$-algebra.  
Suppose that the formal power series $f(z)\in R[[z]]$ is absolutely convergent for 
all $z\in \ZZ_p$ and has 
infinitely many zeros in $\ZZ_p$.  Then $f(z)$ is identically zero.
\end{thm}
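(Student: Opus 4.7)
The plan is to reduce the statement to the classical Strassman theorem for $\mathbb{Q}_p$-valued power series by decomposing $f$ with respect to a $\mathbb{Q}_p$-basis of $R$. Fix a $\mathbb{Q}_p$-basis $e_1,\dots,e_r$ of $R$, and write each coefficient of $f(z)=\sum_{n\ge 0}a_n z^n$ uniquely as $a_n=\sum_{i=1}^{r}a_{n,i}\,e_i$ with $a_{n,i}\in\mathbb{Q}_p$. This gives a decomposition
$$
f(z)\ =\ \sum_{i=1}^{r}f_i(z)\,e_i,\qquad f_i(z):=\sum_{n\ge 0}a_{n,i}\,z^n\in\mathbb{Q}_p[[z]].
$$

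First I would verify that each scalar series $f_i$ converges on $\mathbb{Z}_p$. Since $R$ is finite-dimensional over the complete field $\mathbb{Q}_p$, all norms on $R$ are equivalent and the coordinate projections $\pi_i:R\to\mathbb{Q}_p$, $\sum_j c_j e_j\mapsto c_i$, are continuous $\mathbb{Q}_p$-linear maps. Thus absolute convergence of $\sum a_n z^n$ in $R$ at a given $z\in\mathbb{Z}_p$ implies convergence of each $\sum \pi_i(a_n)\,z^n=\sum a_{n,i}\,z^n$ in $\mathbb{Q}_p$. Hence every $f_i$ is a $p$-adic power series converging on the closed unit disk $\mathbb{Z}_p$.

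Next I would use the hypothesis on zeros: if $z_0\in\mathbb{Z}_p$ satisfies $f(z_0)=0$, then $\sum_{i=1}^r f_i(z_0)\,e_i=0$ in $R$, and by $\mathbb{Q}_p$-linear independence of the $e_i$ we conclude $f_i(z_0)=0$ for every $i$. Since $f$ is assumed to have infinitely many zeros in $\mathbb{Z}_p$, each $f_i\in\mathbb{Q}_p[[z]]$ is a power series converging on $\mathbb{Z}_p$ with infinitely many zeros in $\mathbb{Z}_p$. The classical Strassman theorem applied to each $f_i$ forces $f_i\equiv 0$, and therefore $f\equiv 0$ in $R[[z]]$, as required.

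I do not expect any serious obstacle; the result is essentially a coordinatewise application of Strassman after a finite-basis expansion. The only subtle point is the transfer of absolute convergence from the $R$-valued series to each $\mathbb{Q}_p$-coordinate series, which is handled by equivalence of norms on the finite-dimensional $\mathbb{Q}_p$-vector space $R$. Note that the multiplicative structure on $R$ plays no role in the argument; the same proof gives the statement with $R$ replaced by any finite-dimensional $\mathbb{Q}_p$-vector space.
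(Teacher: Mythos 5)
Your proof is correct and follows essentially the same route as the paper: decompose $f$ coordinatewise with respect to a $\QQ_p$-basis of $R$, use linear independence to transfer the infinitude of zeros to each scalar series, and apply the classical Strassman theorem to each coordinate. Your remark on the equivalence of norms justifying convergence of the coordinate series is a detail the paper leaves implicit, but the argument is the same.
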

\begin{proof}

Let $n$ denote the dimension of $R$ as a $\bQ$-vector space.  We write $$R= \bQ v_1+\bQ v_2+\cdots +\bQ v_n,$$
with $v_1=1$.
Then 
$$
f(z) =v_1f_1(z) + \cdots+ v_n f_n(z) 
$$
with each $f_i(z) = \sum_{k=0}^{\infty} c_k z^k \in \bQ[[z]]$,
converging for all $z \in \ZZ_p$. Linear independence of
the $v_i$ over $\QQ_p$ implies that  $f(z)=0$ requires
that each $f_i(z)=0$ separately. Since $f(z)$
has infinitely many zeros on $z \in \ZZ_p$, 
by Strassman's theorem (\cite{St28}, cf. Cassels \cite[Theorem 4.1]{Ca86})
applied to $f_i(z)$, 
each $f_i(z)$ is identically zero.   
Thus $f(z)$ is identically zero. $~~~$
\end{proof}

Finally, we will need   a finiteness result for reduction (mod $p$) in
a $\ZZ_p$-module.\\

\begin{lem}~\label{le22} 
Let $p$ be prime and let $\mathcal{M}$ be a $\bZ$-module that is isomorphic to a submodule of $\bQ^d$ for some natural number $d$, 
then $\mathcal{M}/p\mathcal{M}$ is a finite-dimensional $\mathbb{Z}/p\mathbb{Z}$-vector space.
\end{lem}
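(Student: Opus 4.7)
The plan is to exploit the fact that sitting inside a finite-dimensional $\bQ$-vector space severely constrains the $\mathbb{F}_p$-dimension of a reduction mod $p$. First I would observe that $\mathcal{M}$ is automatically torsion-free as a $\bZ$-module, since $\bQ^d$ contains no $\bZ$-torsion, and therefore the natural map $\mathcal{M}\to V:=\mathcal{M}\otimes_{\bZ}\bQ$ is injective. The $\bQ$-vector space $V$ embeds into $\bQ^d$ (extend the inclusion $\mathcal{M}\hookrightarrow\bQ^d$ $\bQ$-linearly), so $e:=\dim_{\bQ}V\le d$.

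Next I would show directly that $\dim_{\mathbb{F}_p}(\mathcal{M}/p\mathcal{M})\le e$. Given any $e+1$ elements $m_1,\dots,m_{e+1}\in\mathcal{M}$, a dimension count in $V$ produces a nontrivial $\bQ$-linear relation $\sum_{i=1}^{e+1} c_i m_i=0$. Multiplying through by a suitable power of $p$, one may assume all $c_i\in\bZ$ with $\min_i v_p(c_i)=0$, so at least one $c_i$ is a unit in $\bZ$. Since $\mathcal{M}\hookrightarrow V$ and the combination $\sum c_i m_i$ already lies in $\mathcal{M}$, the relation holds in $\mathcal{M}$ itself. Reducing modulo $p\mathcal{M}$ gives a nontrivial $\mathbb{F}_p$-linear dependence among the classes $\bar m_1,\dots,\bar m_{e+1}$ in $\mathcal{M}/p\mathcal{M}$. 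Hence any $e+1$ elements of $\mathcal{M}/p\mathcal{M}$ are $\mathbb{F}_p$-dependent, proving $\dim_{\mathbb{F}_p}(\mathcal{M}/p\mathcal{M})\le e\le d$.

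There is no real obstacle here; the only point that requires a moment's care is ensuring that the coefficients in the dependence relation can be normalized to lie in $\bZ$ with at least one a unit, so that the reduction mod $p$ yields a genuinely nonzero $\mathbb{F}_p$-relation. This is exactly what the valuation-theoretic scaling $\min_i v_p(c_i)=0$ accomplishes, and it relies crucially on $\bZ$ being a discrete valuation ring. Once this is in place, the argument is complete.
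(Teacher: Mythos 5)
Your proof is correct and follows essentially the same route as the paper: take $d+1$ (or $e+1$) elements, extract a nontrivial $\bQ$-linear relation, normalize the coefficients to lie in $\bZ$ with at least one unit (the paper's ``clearing denominators''), and reduce mod $p$ to obtain an $\mathbb{F}_p$-dependence. The minor refinement of passing through $V=\mathcal{M}\otimes_{\bZ}\bQ$ to get the sharper bound by $e=\dim_{\bQ}V$ rather than $d$ is inessential for the lemma's conclusion.
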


\noindent \begin{proof}  
Note that $\mathcal{M}/p\mathcal{M}$ is a $\mathbb{Z}/p\mathbb{Z}$-vector space.  We claim that its dimension is at most $d$.  To see this, let $\theta_1,\ldots ,\theta_{d+1}$ be elements of $\mathcal{M}$.   
If we regard $\mathcal{M}$ as a submodule of
$\bQ^d$, then we see that $0$ is a non-trivial $\bQ$-linear 
combination of the images of $\theta_1,\ldots ,\theta_{d+1}$ in $\bQ^d$.   
Clearing denomiators, we see that there exist 
$a_1,\ldots ,a_{d+1}\in \bZ$, not all of which are in $p\bZ$, such that 
$\sum_{i=1}^{d+1} a_i \theta_i=0$.  Reducing mod $p\mathcal{M}$, 
we see that the images of
$\theta_1,\ldots ,\theta_{d+1}$ in $\mathcal{M}/p\mathcal{M}$ are linearly dependent 
over $\mathbb{Z}/p\mathbb{Z}$ and thus any set of size $d+1$ in $\mathcal{M}/p\mathcal{M}$ is 
linearly dependent.  Hence $\mathcal{M}/p\mathcal{M}$ is at most $d$-dimensional and in particular it is a finite-dimensional $\ZZ/p\ZZ$-vector space.\end{proof}

%
%
%
%
\section{Generalized $p$-adic Analytic Arc Theorem}\label{sec3}
\setcounter{equation}{0}

In this section we prove a generalization of
the $p$-adic analytic arc theorem given in \cite[Theorem 1.1]{Be06b}
that applies to a larger class of rings.
The generalization applies
to any  $\ZZ_p$-algebra $S$ that is finitely generated and torsion-free as a $\ZZ_p$-module with the added hypothesis that $p \ge 5$.
Examples given in \cite{Be06} show this theorem does  not hold for $p=2$;  
the case  $p=3$ of the result below is currently open.

%
\begin{thm} {\em (Generalized $p$-adic analytic arc theorem)} \label{thAA}
Let $p\ge 5$ be prime, and suppose \\
$\sigma=(H_1, H_2, \ldots, H_d)$ 
 is a polynomial  automorphism of ${\mathbb Z}_p[x_1,\ldots ,x_d]$.
Let  $S$ be 
a $\bZ$-algebra that is finitely generated and torsion-free as a $\bZ$-module,
and let $f_{\sigma}: S^d \to S^d$ denote the induced 
dynamical evaluation map.
Suppose that an initial value
$\bs_0 =(s_1,\ldots ,s_d)\in S^{d}$ satisfies the following two conditions: 
\begin{enumerate}
\item[(i)]  $H_i(s_1,\ldots ,s_d)\equiv s_i~(\bmod ~ pS)$ for $0\le i\le d$;
\item[(ii)] The Jacobian matrix $M=J(\sigma; \bx) |_{\bx=\bs_0}$ 
evaluated at $\bx=\bs_0$ is the identity matrix $(\bmod ~pS)$.
\end{enumerate}
Then there exist power series 
$f_1(z),\ldots ,f_d(z) \in (S\otimes_{\bZ} \bQ)[[z]]$ 
which converge for all  $z \in \ZZ_p$  and
which satisfy: 
\begin{enumerate}
\item $f_i(z+1) \ = \
H_i(f_1(z),\ldots ,f_d(z))$, \,for $1\le i\le d$;\\
\item $f_i(0)= s_i$, \,for $1\le i\le d$.
\end{enumerate}
\end{thm}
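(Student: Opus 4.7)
The plan is to adapt the successive-approximation proof of the original $p$-adic analytic arc theorem \cite{Be06b}, replacing scalar coefficients by elements of $S$. First I would fix a $\bZ$-module basis $v_1 = 1, v_2,\ldots,v_r$ of $S$ (which exists because $S$ is finitely generated and torsion-free as a $\bZ$-module), giving $S \cong \bZ^{\,r}$ as a $\bZ$-module but with a multiplication whose structure constants mix the coordinates. This mixing is precisely the obstruction that prevents a coordinatewise application of \cite{Be06}. To work around it, I would carry out the entire construction inside the ring $\sRS \subset (S \otimes_{\bZ}\bQ)[z]$ of polynomials $g(z)$ with $g(\bZ) \subseteq S$. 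By the Mahler basis theorem, $\sRS$ is the free $S$-module on $\{\binom{z}{k}\}_{k\ge 0}$, and in particular it is closed under the discrete difference $\Delta g(z) := g(z+1) - g(z)$ and under a discrete ``integration'' operator $\binom{z}{k} \mapsto \binom{z}{k+1}$ that provides antiderivatives staying inside $\sRS$.

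I would then build polynomial approximations $\mathbf{g}^{(n)} = (g_1^{(n)},\ldots,g_d^{(n)}) \in \sRS^{\,d}$ to the desired analytic arc by induction on $n$, maintaining the conditions $g_i^{(n)}(0) = s_i$, $\mathbf{g}^{(n)} \equiv \mathbf{g}^{(n-1)} \pmod{p^n \sRS^{\,d}}$, and
\[
g_i^{(n)}(z+1) \ \equiv \ H_i\bigl(g_1^{(n)}(z),\ldots,g_d^{(n)}(z)\bigr) \pmod{p^{n+1}\sRS} \qquad (1 \le i \le d).
\]
The base case $g_i^{(0)}(z) := s_i$ is supplied by hypothesis (i). For the induction step, I would set $g_i^{(n+1)} := g_i^{(n)} + p^{n+1} h_i$, Taylor-expand $H_i(\mathbf{g}^{(n)} + p^{n+1}\mathbf{h})$ around $\mathbf{g}^{(n)}(z)$, and use hypothesis (ii) together with the fact that $\mathbf{g}^{(n)}(z) \equiv \bs_0 \pmod{p\sRS}$ to conclude that the Jacobian $M(\mathbf{g}^{(n)}(z))$ is the identity modulo $p$. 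The problem then reduces to solving a difference equation of the form $h_i(z+1) - h_i(z) \equiv -E_i(z) \pmod{p\sRS}$ in $\sRS$, which I would solve by discrete integration in the Mahler basis.

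The main obstacle will be to guarantee that the $p$-adic decay of the Mahler coefficients of the $g_i^{(n)}$ is strong enough for the limit $f_i(z) := \lim_n g_i^{(n)}(z)$ to converge absolutely at every $z \in \bZ$, and not merely to exist as a formal power series in $(S\otimes_{\bZ}\bQ)[[z]]$. This is precisely the purpose of Lemma~\ref{le34} on subalgebras of $\sRS$: it must be used to control how the Mahler degrees and $p$-adic valuations of the successive corrections propagate from one iteration to the next while staying inside a well-behaved subalgebra. The restriction $p \ge 5$ enters at the Taylor step, where higher-order Taylor remainders introduce denominators $k!$ with $v_p(k!) = (k-s_p(k))/(p-1)$, and one needs this growth rate to be dominated by the $p^{n+1}$ gain coming from the correction $\mathbf{h}$, in parallel with the mechanism in \cite{Be06}. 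Once uniform convergence of $g_i^{(n)}(z)$ on $\bZ$ is in hand, passing to the limit in the functional equation and the initial condition yields the required power series $f_i(z) \in (S \otimes_{\bZ}\bQ)[[z]]$ satisfying both conclusions of the theorem.
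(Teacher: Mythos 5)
Your proposal follows the same overall strategy as the paper's proof: work inside the ring $\sRS$ of polynomials carrying $\ZZ_p$ into $S$, construct successive polynomial approximations $g_i^{(n)}(z)$ satisfying the functional equation modulo increasing powers of $p$, solve the linearized difference equation in the Mahler basis (the paper's Lemma~\ref{lem: matrix}), and use Lemma~\ref{le34} to control the degrees of the corrections so that the limit converges. Your diagnosis of why one cannot simply apply the scalar analytic-arc theorem coordinatewise (the $S$-multiplication mixes coordinates) is also exactly the paper's motivating observation.

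One point in your sketch is misplaced, however: you attribute the hypothesis $p\ge 5$ to ``higher-order Taylor remainders'' at the expansion step. Since the $H_i$ are polynomials with $\ZZ_p$-coefficients, the multivariate Taylor expansion of $H_i(\mathbf{g}^{(n)} + p^{n+1}\mathbf h)$ is exact with \emph{integral} coefficients, so no $k!$ denominators appear there. In the paper's proof the factorials enter only at the very end, when the function written in the Mahler basis $f_i(z) = s_i + \sum_k b_{i,k}\binom{z}{k}$ is reinterpreted as a power series in $z$: the coefficient of $z^k$ inherits a $1/k!$. What makes this converge on all of $\ZZ_p$ is the precise degree bound $\deg h_{i,j} \le 2j-1$ (the content of Lemma~\ref{le34}), which forces $c_{i,j,k} = 0$ for $j < (k+1)/2$, hence $|b_{i,k}|_p < p^{-(k+1)/2}$; comparing with $1/|k!|_p < p^{k/(p-1)}$ gives convergence iff $\tfrac12 > \tfrac{1}{p-1}$, i.e.\ $p\ge 5$. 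Your sketch gestures at ``controlling Mahler degrees and valuations'' but does not state this specific degree bound, and without it (or something equivalent) the final convergence estimate does not close. So the architecture is right, but you should relocate the $p\ge 5$ mechanism to the Mahler-to-power-series conversion and make the $\deg \le 2j-1$ bound the explicit engine of the estimate.
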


\begin{rem}\label{rem36a}
{\em The  associated {\em analytic arc}  in $S^d$ constructed in Theorem ~\ref{thAA}  is 
$$
\sC= \sC( \sigma, \bs_0) := \{ (f_1(z), \ldots , f_d(z)): z \in \ZZ_p\} \subset S^d.
$$
The result implies that the  arc  $\sC$ contains all the iterates $\{ f_{\sigma}^{(m)} (\bs_0) : m \ge 0\}$ of the initial value $\bs_0$,
since (1) shows $ f_{\sigma}^{(m)} (\bs_0)= (f_1(m), \ldots , f_d(m))$ holds.
IT covers cases such as $S= \ZZ_5$ with $\sigma(x) = x+5$ on $\ZZ_5[x]$, 
where one may take $\bs_0=0$, and  nevertheless
 the induced dynamical evaluation map $f_{\sigma}$ has no fixed points.
 The proof  follows the  basic plan of the result in \cite{Be06},  but involves more complications.
It shows  that  conditions (i), (ii)
  imply that $f_{\sigma}^{(p^m)}(\bs_0)$ converges rapidly to $s_0$ as $m\to \infty$,
and uses this  to show analyticity of the constructed maps $f_1(z),\ldots ,f_d(z)$.} 
\end{rem}
\begin{rem}\label{rem36b}
{\em  Theorem \ref{thAA} cannot be extended to cover all general algebra endomorphisms of $\ZZ_p[x_1, ..., x_d]$.  
For general endomorphisms there may be no point where condition (b) holds, and  
Remark \ref{rem22} asserts  one can only guarantee there exists a point where 
the Jacobian, when evaluated  $(\bmod ~pS)$, is an idempotent matrix.    The worst case is where the
Jacobian vanishes identically $(\bmod ~pS)$. Consider 
 $d=1$ with $H_1(x) = x^{10}$, where we take $p=5$ and $s_0=5$, a case where
 the Jacobian vanishes identically.  
 In this case, we have that $\sigma^n(s_0)=5^{{10}^n}$ for $n\ge 0$.  Suppose that it were 
 possible to find an arithmetic progression $\{an+b\}_{n\in \mathbb{Z}}$ and a $5$-adic analytic map 
 $f:\mathbb{Z}_5\to \mathbb{Z}_5$ with the property that $f(n)=\sigma^{an+b}(s_0)$ for all integers $n$.  
 Then we would necessarily have $f(5^k)\to 0$ as $k\to \infty$.  But $f(0)=\sigma^b(5)\neq 0$ and so it is impossible
 to find even a  continuous map $f:\mathbb{Z}_5\to \mathbb{Z}_5$ that has the property (1).}
\end{rem}

To begin the proof, we  first  introduce a ring of polynomials which takes
``integral'' values when evaluated at
values in $\ZZ_p$.\smallskip

\begin{defi}~\label{de44}
{\em 
Given a prime $p$ and a $\bZ$-algebra $S$ that is 
finitely generated and torsion-free as a $\bZ$-module,  let
 $\sRS$ denote
the subring of the polynomial ring
$(S\otimes_{\bZ} {\mathbb Q}_p)[z]$ that consists
of those polynomials which under evaluation at each $z \in \ZZ_p$
belong to $S$.
}
\end{defi}
\smallskip

The $\ZZ_p$-algebra $\sRS$ offers a crucial advantage over the polynomial ring
$S\otimes_{\bZ}
{\mathbb Q}[z]$, which is 
that the following closure 
property holds: if $Q(z)\in \sRS$ then the equation 
\begin{equation}\label{301}
F(z+1)-F(z)=Q(z)
\end{equation}
 has a solution with $F(z)\in \sRS$.  We use this in proving our general analytic arc theorem (Theorem \ref{thAA}).  
 We note however that the  ring $\sRS$ has some pathological properties.
 It  is a non-Noetherian ring,
 by a general criterion of  Cahen and Chabert \cite[Prop. VI.2.4]{CC97}.
 (In fact, for  $S= \ZZ_p$, the ring $\sRS = \ZZ_p[ f_n(z) = \frac{z(z-1)\cdot(z-(n-1))}{n!}: n \ge 0]$
 as a linear space over $\ZZ_p$---see Lemma \ref{le44} below. An example of an infinite ascending chain of ideals in $\sRS$
 that does not stabilize  is:
 $(z), (z, f_p(z)), (z, f_p(z), f_{p^2}(z)), \ldots$.)
 Secondly,  it can be shown that $\sRS$  is not of finite type over $\ZZ_p$, i.e., it
 is not finitely generated as a $\ZZ_p$-algebra.
  Finally, $\sRS$ is not a UFD, in general: 
 for $S=\ZZ_2$, then $u:=z(z-1)/2$, $v:=(z-2)(z-3)/2$, $u':=z(z-3)/2$, $v':=(z-1)(z-2)/2$ are irreducible elements of $\sRS$ and $uv=u'v'$; there are similar examples for $S=\ZZ_p$, $p \ge 3$.\smallskip

\begin{lem}~\label{le44}
Given a prime $p$ and a 
$\bZ$-algebra $S$ that is finitely generated and 
torsion-free as a $\bZ$-module, the ring $\sRS$
is given by 
\begin{equation}
\sRS \ = \ 
\Bigg\{ f(z)=
\sum_{i=0}^m b_i {z\choose i}~:~ m\ge 0~{\rm and}~b_i\in S\Bigg\}.
\end{equation}
\end{lem}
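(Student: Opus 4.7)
The plan is to prove the two inclusions separately, with the reverse inclusion being the substantive direction. The key tool throughout is the finite difference operator $\Delta f(z) := f(z+1) - f(z)$ together with the classical identity
\[
\Delta \binom{z}{i} \ = \ \binom{z}{i-1}, \qquad i \ge 1,
\]
which iterates to $\Delta^k \binom{z}{i}\big|_{z=0} = \binom{0}{i-k} = \delta_{i,k}$.

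For the easy inclusion $(\supseteq)$, I would first observe that each $\binom{z}{i}$ is a polynomial in $\mathbb{Q}[z]$ which takes values in $\mathbb{Z}$ on the integers, hence by continuity and the density of $\mathbb{Z}$ in $\mathbb{Z}_p$, it maps $\mathbb{Z}_p$ into $\mathbb{Z}_p$. Since $S$ is a $\mathbb{Z}_p$-module (as a $\mathbb{Z}_p$-algebra), any finite sum $\sum_{i=0}^m b_i \binom{z}{i}$ with $b_i \in S$ takes values in $S$ for every $z \in \mathbb{Z}_p$, so it lies in $\mathcal{R}(S)$.

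For the main inclusion $(\subseteq)$, suppose $f(z) \in \mathcal{R}(S)$ has degree $m$. The polynomials $\binom{z}{0}, \binom{z}{1}, \ldots, \binom{z}{m}$ form a basis for the degree-$\le m$ subspace of $(S \otimes_{\mathbb{Z}_p} \mathbb{Q}_p)[z]$ as a free $(S \otimes_{\mathbb{Z}_p} \mathbb{Q}_p)$-module, so we may expand uniquely
\[
f(z) \ = \ \sum_{i=0}^m b_i \binom{z}{i}, \qquad b_i \in S \otimes_{\mathbb{Z}_p} \mathbb{Q}_p.
\]
Applying the identity above, I obtain $b_k = \Delta^k f(0)$ for each $k$. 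Expanding the $k$-th finite difference yields
\[
b_k \ = \ \sum_{j=0}^k (-1)^{k-j} \binom{k}{j} f(j).
\]
Since every nonnegative integer $j$ lies in $\mathbb{Z}_p$, the hypothesis $f \in \mathcal{R}(S)$ gives $f(j) \in S$, and since $S$ is closed under $\mathbb{Z}$-linear combinations, we conclude $b_k \in S$. This produces the required representation.

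The main obstacle here is essentially just keeping the algebraic setup straight—realizing that the coefficients $b_i$ live a priori in $S \otimes_{\mathbb{Z}_p} \mathbb{Q}_p$ but can be recovered from finitely many integer-argument values of $f$ via an explicit $\mathbb{Z}$-linear formula, which forces them back into $S$. No convergence or $p$-adic analysis is needed; the result is purely a consequence of the integer-valued polynomial theory of Pólya, adapted to coefficients in $S$.
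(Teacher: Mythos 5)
Your proof is correct, and it takes a genuinely different route from the paper's. The paper simply invokes Mahler's theorem (that $\bigl\{\binom{z}{k}\bigr\}_{k\ge 0}$ is a $\ZZ_p$-basis of the integer-valued polynomials in $\QQ_p[z]$) for the special case $S=\ZZ_p$, and then reduces the general case to that one by observing that $S\otimes_{\ZZ_p}\QQ_p[z]$ decomposes as a finite direct sum of copies of $\QQ_p[z]$ (using that $S$ is finitely generated and torsion-free, hence free of finite rank), applying Mahler's result coordinatewise. You instead give a direct, self-contained proof: the $(\supseteq)$ inclusion via density of $\ZZ$ in $\ZZ_p$ and the $\ZZ_p$-module structure of $S$, and the $(\subseteq)$ inclusion by recovering the coefficients through the finite-difference formula $b_k=\Delta^k f(0)=\sum_{j=0}^k(-1)^{k-j}\binom{k}{j}f(j)$, which is a $\ZZ$-linear combination of values $f(j)\in S$. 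Your argument amounts to reproving Mahler's theorem in the coefficient ring $S$; it buys you a more elementary and uniform proof that does not need the free-module decomposition (indeed, it does not genuinely use the finite-generation hypothesis at all, only torsion-freeness to embed $S$ into $S\otimes_{\ZZ_p}\QQ_p$). The paper's version is shorter at the cost of citing an external result and a structure-theoretic reduction step. Both are valid.
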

\begin{proof} In the special case $S=\ZZ_p$ the lemma asserts that  
the polynomials ${z\choose k}= \frac{z(z-1) \cdots (z-k+1)}{k!}$
for $k \ge 0$ are a basis  of the polynomials in $\QQ_p[z]$ that map $\ZZ_p$
into itself. This is a theorem of Mahler \cite[p. 49--50]{Mah81}.
For the general case, since  $S$ is finitely generated and torsion-free,
$S \otimes_{\bZ} \QQ_p [z]$ is a finite direct sum 
of copies of $\QQ_p[z]$. The result then follows from the special case
applied to each copy separately.
\end{proof}


\begin{lem} \label{lem: matrix}
Let $p$ be prime and let $S$ be a  $\bZ$-algebra 
that is finitely generated and torsion-free as a $\bZ$-module.
Suppose 
$Q_1(z), \ldots, Q_{d}(z) \in \sRS$  are given polynomials of degree at most $N$.
Then there exists a solution 
$[h_1(x),\ldots ,h_d(x)]^{\rm T}\in \sRS^n$ 
to the equation
\begin{equation} \label{eq: mateq} 
\left[\begin{array}{c} h_{1}(z+1) \\ \vdots \\ h_{n}(z+1)\end{array}\right]
\ \equiv \ \left[\begin{array}{c} h_{1}(z) \\ \vdots \\ h_{d}(z)\end{array}
\right] -
\left[\begin{array}{c} Q_{1}(z) \\ \vdots \\ Q_{d}(z)\end{array}\right]
\ ({\rm mod}~p\sRS)\end{equation}
such that $h_1(0)=\cdots =h_d(0)=0$ and $h_1,\ldots ,h_d$ have degree at most $N+1$.
\end{lem}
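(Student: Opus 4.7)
The plan is to observe that the matrix equation \eqref{eq: mateq} decouples completely into $d$ independent scalar equations of the form $h_i(z+1) \equiv h_i(z) - Q_i(z) \pmod{p\sRS}$, since no row mixes coordinates. Hence it suffices to prove the scalar statement: given $Q(z) \in \sRS$ of degree at most $N$, there exists $h(z) \in \sRS$ of degree at most $N+1$, with $h(0) = 0$, satisfying $h(z+1) - h(z) \equiv -Q(z) \pmod{p\sRS}$. In fact I will produce an exact solution, which is strictly stronger than the stated congruence.

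First I would invoke Lemma~\ref{le44} to expand
\[
Q(z) \ = \ \sum_{i=0}^{N} b_i \binom{z}{i}, \qquad b_i \in S,
\]
in the Mahler basis for $\sRS$. The engine of the construction is the classical Pascal-type identity
\[
\binom{z+1}{i+1} - \binom{z}{i+1} \ = \ \binom{z}{i},
\]
valid as an identity of polynomials, which turns the shift operator $h(z) \mapsto h(z+1) - h(z)$ into an index shift on Mahler coefficients.

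Given this, I would simply define
\[
h(z) \ := \ -\sum_{i=0}^{N} b_i \binom{z}{i+1}.
\]
Applying Lemma~\ref{le44} in the reverse direction, $h \in \sRS$ because all of its Mahler coefficients lie in $S$. Evidently $\deg h \le N+1$ and $h(0) = 0$, since $\binom{0}{i+1} = 0$ for $i \ge 0$. A one-line computation using the Pascal identity gives
\[
h(z+1) - h(z) \ = \ -\sum_{i=0}^{N} b_i \left(\binom{z+1}{i+1} - \binom{z}{i+1}\right) \ = \ -\sum_{i=0}^{N} b_i \binom{z}{i} \ = \ -Q(z),
\]
which is an exact equality in $\sRS$, and a fortiori a congruence modulo $p\sRS$. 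Applying this component-by-component yields the desired solution vector $(h_1, \ldots, h_d)^{\rm T}$.

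There is essentially no obstacle: the crucial content is already packaged in Lemma~\ref{le44}, which identifies $\sRS$ with the $S$-span of the Mahler basis $\{\binom{z}{i}\}_{i \ge 0}$, and the Pascal identity makes the difference equation solvable termwise. It is precisely this closure property of $\sRS$ under the discrete antiderivative that motivates working with $\sRS$ rather than with the naive polynomial ring $(S \otimes_{\bZ} \bQ_p)[z]$, where such Mahler coefficients need not lie in $S$.
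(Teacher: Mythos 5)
Your proof is correct and matches the paper's own argument essentially verbatim: both expand each $Q_i$ in the Mahler basis via Lemma~\ref{le44}, define $h_i$ by shifting the index to $\binom{z}{k+1}$, and invoke the Pascal identity to verify the difference equation exactly (hence a fortiori mod $p\sRS$). The only cosmetic difference is that you make the decoupling into $d$ scalar problems and the exactness of the solution explicit, whereas the paper leaves these implicit.
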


\noindent \begin{proof} By assumption, each $Q_i (z) \in \sRS$ is
of degree at most $N$, and so 
\[ 
Q_{i}(z) \ = \ \sum_{k=0}^{N} c_{i,k} {z\choose k}
\]
with each $c_{i,k} \in S$.
We define 
\begin{equation} \label{eq: hi0}
h_{i}(z) \ := \ - \sum_{k=0}^N c_{j,k} {z\choose k+1},
\end{equation}
which implies $h_{i} \in \sRS$ and is of degree at most $N+1$.
It is easy to check that this gives a solution 
to equation (\ref{eq: mateq}), using the identity
\[
 {z+1\choose k+1}-{z \choose k+1} = {z \choose k}.
\]
Furthermore, $h_i(0)=0$ for $1\le i\le d$. 
\end{proof} 

\vskip 2mm
To create analytic maps in the modified version of the $p$-adic analytic arc lemma, we will use
the following lemma about subalgebras of $\sRS$.\smallskip


%

\begin{lem}\label{le34}
 Let $p$ be a prime and let $S$ be a  $\bZ$-algebra 
that is finitely generated and torsion-free as a $\bZ$-module.  Let $N$ be a natural number, and let
\[ S_N \ = \ \Bigg\{ c + \sum_{i=1}^N p^i h_i(z)~|~c\in S, h_i(z)\in \sRS, {\rm deg}(h_i)\le 2i-1\Bigg\}\]
and
\[ T_N \ = \ S_N+\Bigg\{ c + \sum_{i=1}^M p^i h_i(z)~|~M\ge 1, c\in S, h_i(z)\in \sRS, {\rm deg}(h_i)\le 2i-2\Bigg\}.\]
Then the $\bZ$-subalgebra of $\sRS$ generated by $S_N$ is contained in $T_N$.
\label{lem: SN}
\end{lem}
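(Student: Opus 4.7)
The plan is to show that $T_N$ is itself a unital $\bZ$-subalgebra of $\sRS$ containing $S_N$; once this is verified, the $\bZ$-subalgebra generated by $S_N$---being the smallest such object---must lie inside $T_N$. That $T_N$ is a $\bZ$-submodule is immediate from its definition as a sum of two $\bZ$-modules, and $1 \in T_N$ comes from taking $c = 1$ and all $h_i = 0$ in $S_N$. So the only nontrivial point is closure under multiplication.

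I would begin by writing elements of $T_N$ in a uniform normal form. Unpacking the defining sum $T_N = S_N + (\cdots)$, every $a \in T_N$ can be expressed as
\begin{equation*}
a \ = \ c + \sum_{i \ge 1} p^{i}\,h_{i}(z),
\end{equation*}
where $c \in S$, only finitely many $h_{i} \in \sRS$ are nonzero, $\deg h_{i} \le 2i-1$ for $1 \le i \le N$, and $\deg h_{i} \le 2i-2$ for $i > N$. Multiplying two such elements $a$ and $b = c' + \sum_{j \ge 1} p^{j} h'_{j}(z)$ yields
\begin{equation*}
ab \ = \ cc' \ + \ \sum_{i \ge 1} p^{i}\bigl(c\,h'_{i} + c'\,h_{i}\bigr) \ + \ \sum_{k \ge 2} p^{k}\, g_k(z), \qquad g_k(z) := \sum_{\substack{i+j=k \\ i,j \ge 1}} h_{i}\,h'_{j}.
\end{equation*}
The first two groups inherit the degree bounds of the $h$'s because scalar multiplication by $c$ or $c'$ does not raise degree, so they sit in the normal form of $T_N$ verbatim. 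For the cross term, a case check (on whether each of $i, j$ lies in $\{1,\dots,N\}$ or beyond) shows uniformly that $\deg(h_{i}\,h'_{j}) \le 2(i+j) - 2 = 2k - 2$; this is comfortably inside the $2k-1$ budget when $k \le N$ and exactly meets the $2k-2$ budget when $k > N$, so each $g_k$ respects the normal form and $ab \in T_N$.

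The crux, and the only observation with any content, is the one-unit \emph{degree drop}: the bound $\deg h_i \le 2i-1$ in the definition of $S_N$ is calibrated so that multiplying two such coefficients produces a polynomial of degree $\le 2(i+j) - 2$, strictly one less than the naive bound $2(i+j) - 1$. The auxiliary second summand in $T_N$, with its weaker degree bound $\le 2i - 2$, is designed precisely to absorb this leftover, which is why passing to the subalgebra stays inside $T_N$ though it typically escapes $S_N$. Beyond tracking this drop, the argument is bookkeeping; I do not anticipate a genuine obstacle.
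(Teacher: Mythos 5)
Your proof is correct and follows essentially the same route as the paper: both hinge on the same degree-drop observation that $(2i-1)+(2j-1)=2(i+j)-2$, so cross terms stay within the $T_N$ budget. The only (cosmetic) difference is that you show $T_N$ is itself closed under multiplication via a uniform normal form, whereas the paper first notes $S_N, T_N$ are closed under addition with $S_N\subseteq T_N$ and then reduces to checking $S_N T_N\subseteq T_N$; both reductions are valid and the core computation is identical.
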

\noindent \begin{proof} Since $S_N$ and $T_N$ are both closed under addition and $S_N\subseteq T_N$, it is sufficient to show that $S_NT_N\subseteq T_N$.
To do this, suppose
\[H(z) \ = \ c + \sum_{i=1}^N p^i h_i(z) \ \in \ S_N\] and
\[G(z) \ = \ d + \sum_{i=1}^M p^i g_i(z) \ \in \ T_N,\]
where $c,d\in S$ and $h_i(z),g_i(z)\in \sRS$ with ${\rm deg}(g_i)\le 2i-2$ for $i>N$ and ${\rm deg}(g_i),{\rm deg}(h_i)\le 2i-1$ for $i\le N$.
We must show that $H(z)G(z)\in T_N$.  Notice that
\[
H(z)G(z) \ = \ cG(z)+dH(z) -cd + \sum_{i=1}^N p^i h_i(z)\sum_{j=1}^M p^j g_j(z). \]
Then $cG(z), dH(z)$, and $cd$ are all in $T_N$ and since $T_N$ is closed under addition, it is sufficient to show that
\[\sum_{i=1}^N p^i h_i(z)\sum_{j=1}^M p^j g_j(z)  \ = \ \sum_{k=2}^{N+M} p^k \sum_{i=1}^{k-1}g_i(z)h_{k-i}(z)\]
is in $T_N$.  But since $g_i(z)$ has degree at most $2i-1$ and $h_{k-i}(z)$ has degree at most $2(k-i)-1$, we see that 
$$\sum_{i=1}^{k-1} g_i(z)h_{k-i}(z)$$ has degree at most $2k-2$.  It follows that
\[\sum_{i=1}^N p^i h_i(z)\sum_{j=1}^M p^j g_j(z)  \ \in \ T_N.\]
The result follows. $~~~$ \end{proof}

\noindent \begin{proof}[Proof of Theorem \ref{thAA}] 
We  construct $(f_1(z), \ldots, f_{d}(z))$ by successive approximation 
$(\bmod~ p^{j} \sRS)$.  The approximations will be denoted $g_{i, j}(z)$ for
$1 \le i \le d$. We initialize with 
$$
g_{i,0}(z) := s_i   ~~~~~\mbox{for}~~ 1\le i \le d.
$$

We prove by induction on $j$ that one can recursively pick 
$$
g_{i,j}(z) :=s_i+\sum_{k=1}^{j} p^{k}h_{i,k}(z),
$$
such  polynomials $h_{i,j}(z) \in \sRS$  ($1\le i\le d$)  satisfy
the three conditions:
\begin{enumerate}
\item $h_{i,j}(0)=0$ for $1\le i\le d$;
\item $h_{i,j}(z)$ has degree at most $2j-1$ for  $1\le i\le d$;
\item there holds 
$$ 
g_{i,j}(z+1)\equiv H_i(g_{1,j}(z),\ldots ,g_{d,j}(z)) ~(\bmod ~
p^{j+1}\sRS ).
$$
\end{enumerate}
  The base case of the induction is $j=0.$ Conditions (1) and (2) are
  vacuous, and (3) holds using hypothesis (i), observing that  $\sRS \cap S=S$.

  Let $j \ge 1$ and assume that we have defined $h_{i,k}$ for $0\le i\le d$ 
and $k <j $
so that conditions (1)--(3) hold.  Our object is now to construct
\[
g_{i, j+1}(z) := g_{i,j}(z) + p^j h_{i, j}(z),
\]
in which polynomials $h_{i, j}(z) \in \sRS$ are to
be determined, so that conditions (1)--(3) hold.
By assumption
\begin{equation} \label{eq: gH}
g_{i,j-1}(z+1)-H_i(g_{1,j-1}(z),\ldots ,g_{d,j-1}(z))=p^jQ_{i,j}(z),
\end{equation}
with
$Q_{i,j}\in \sRS$
for $1\le i\le d$.
Using the notation of the statement of Lemma \ref{lem: SN}, we see that conditions (2) and (3) show that
$g_{1,j-1}(z),\ldots ,g_{d,j-1}(z)$ are in $S_{j-1}$.
Thus by Lemma \ref{lem: SN} we see that
\[ p^j Q_{i,j}(z)\ = \ g_{i,j-1}(z+1)-H_i(g_{1,j-1}(z),\ldots ,g_{d,j-1}(z))\]
is in the algebra generated by $S_{j-1}$ and hence is in $T_{j-1}$.  It follows that we can write
$p^j Q_{i,j}(z)= c_{i,j}+ \sum_{k=1}^M p^k q_{i,j,k}(z)$ for some $c_{i,j}\in  S$ and polynomials $q_{i,j,k}(z)\in \sRS$ such that ${\rm deg}(q_{i,j,k})\le 2k-1$ for $k\le j-1$ and ${\rm deg}(q_{i,j,k})\le 2k-2$ for $k\ge j$.  Consequently,
$p^jQ_{i,j}(z)$ is equivalent modulo $p^{j+1}R$ to the polynomial
$$ c_{i,j}+ \sum_{k=1}^{j} p^k q_{i,j,k}(z),$$
a polynomial of degree at most $2j-2$.   Hence $Q_{i,j}(z)$ is congruent to a polynomial in $\sRS$ of degree at most $2j-2$ mod $p\sRS$.
To satisfy property
(3) for $j$ it  is sufficient to find 
$\{h_{i, j}(z) \in \sRS : 1 \le i \le d\}$ such
that 
\[
g_{i,j-1}(z+1)+p^j h_{i,j}(z+1)  - H_i(g_{1,j-1}(z)+p^jh_{1,j}(z),\ldots ,
g_{d,j-1}(z)+p^jh_{d,j}(z))\]
is in  $p^{j+1}\sRS$
for $1\le i\le d$.
This expression becomes
\[
p^jQ_{i,j}(z) + p^j h_{i,j}(z+1)
 - p^j \sum_{\ell=1}^d h_{\ell,j}(z) \frac{\partial H_{i}}
{\partial x_{\ell}} (x_1, \ldots, x_d)
\Big|_{x_{1}=g_{1,j}(z),\ldots, x_d=
g_{d,j}(z)} \]
modulo $p^{j+1}\sRS$. 
It therefore suffices  to solve 
for $1 \le i \le d$  the system
\begin{equation} \label{eq: Q}
Q_{i,j}(z) + h_{i,j}(z+1)
 - \sum_{\ell=1}^d h_{\ell,j}(z) \frac{\partial H_{i}}
{\partial x_{\ell}}(x_1, \ldots, x_d) \Big|_{x_{1}=g_{1,j}(z),\ldots, x_d=
g_{d,j}(z)} 
\end{equation}
$ (\bmod ~p\sRS)$, where we may assume that $Q_{i,j}$ is of degree at most $2j-2$.
Now consider  the Jacobian matrix $M^{(j)}(z) \in M_{d \times d}(\sRS)$
with polynomial entries
\[
M^{(j)}(z)_{i\ell} := \frac{\partial H_{i}}
{\partial x_{\ell}}(x_1, \ldots, x_d) \Big|_{x_{1}=g_{1,j}(z),\ldots, x_d=
g_{d,j}(z)}.
\]
Property (1) for $j$ yields for $1 \le i \le d$ that 
\[ 
g_{i, j}(z) \equiv s_i\, (\bmod~ p \sRS).
\]
It follows that
\[
M^{(j)}(z) \equiv J(\sigma; \bs_0) (\bmod~ p \sRS).
\]
By hypothesis (ii) the matrix $M=  J(\sigma; \bs_0) \in M_{d\times d}(S)$
is congruent to the identity $(\bmod~pS)$, and we have
\[
M^{(j)}(z) \equiv M (\bmod~ p \sRS).
\]
Now equation (\ref{eq: Q}) can be rewritten in the form 
\[ 
\left[\begin{array}{c} h_{1,j}(z+1) \\ \vdots \\ h_{d,j}(z+1)\end{array}
\right]
\equiv M\left[\begin{array}{c} h_{1,j}(z) \\ \vdots \\ h_{d,j}(z)\end{array}
\right] - 
\left[\begin{array}{c} Q_{1,j}(z) \\ \vdots \\ Q_{d,j}(z)\end{array}
\right]
(\bmod~ p\sRS).
\]
The hypotheses of Lemma \ref{lem: matrix} are satisfied, so
we conclude that 
there exists a  solution $[h_{1,j}(z), \ldots, h_{d, j}(z)] \in \sRS^d$ with $h_{i,j}(0)=0$ for $1\le i\le d$ and
$h_i(z)$ of degree at most $2j-1$.
Thus conditions (1)--(3) are satisfied for $j$, completing the induction
step.

We now set
\[
f_i(z) := s_i + \sum_{j=1}^{\infty} p^j h_{i, j}(z).
\]
Now each $h_{i,j}(z) \in \sRS$ is of degree at most $2j-1$ and hence
\[
h_{i,j}(z) = \sum_{k=0}^{2j-1}  c_{i,j,k} {z \choose k},
\]
with $c_{i,j,k} \in S$.
We find that
\begin{eqnarray}
f_i(z) &=& s_i + \sum_{j=1}^{\infty} p^j 
\left( \sum_{k=0}^{N_j} c_{i,j,k} {z \choose k}\right) \nonumber\\
&=& s_0 + \sum_{k=0}^{\infty} b_{i,k}  {z \choose k} \label{eq475}
\end{eqnarray}
in which
\[
b_{i,k} := \sum_{j=1}^{\infty} p^j c_{i,j,k}
\]
is absolutely convergent $p$-adically, since each $c_{i,j,k} \in S$.
To show the series (\ref{eq475}) converges to an analytic map on
 $z \in \ZZ_p$, we must establish that $|b_{i,k}|_{p}/|k!|_p \to 0$
as $k \to \infty$, i.e. that for any $j >0$ one has $b_{i,k}/k! \in p^j \bZ$
for all sufficiently large $k$.
To do this, we note that $c_{i,j,k}=0$ if $j<(k+1)/2$.  Hence
\[ b_{i,k} := \sum_{j\ge (k+1)/2} p^j c_{i,j,k}.\]
It follows that $|b_{i,k}|_p<p^{-(k+1)/2}$. Since $1/|k!|_p<p^{k/(p-1)}$, we see 
that 
$b_{i,k}/k!\rightarrow 0$ since $p>3$.  Hence $f_1,\ldots ,f_d$ are analytic maps on $\bZ$.
 
The argument above also showed that
\[ 
f_i(z) \equiv g_{i,j}(z)~(\bmod~p^{j}\sRS).
\]
It then follows from property (3) above that
\[
f_i(z+1) \equiv H_i( f_1(z), \ldots, f_d(z)) (\bmod~ p^j \sRS)
\]
Since this holds for all $j \ge 1$, we conclude
that
\[
f_i(z+1) = H_i( f_1(z), \ldots, f_d(z)).
\]
This establishes (1).
Finally, we have
\[ 
f_{i}(0) = s_i + \sum_{j=1}^{\infty} p^j h_{i,j}(0) = s_i,
\]
which establishes (2). $~~~$ \end{proof} 
\smallskip

\section{Polynomial Ring Case}\label{sec4}
\setcounter{equation}{0}

In this section, we consider automorphisms of a polynomial ring over a field.
We prove the following theorem.  

\begin{thm}
\label{thm: polycase}
Let $K$ be a field of characteristic zero and let $A=K[x_1,\ldots ,x_d]$.  If $\sigma: A\to A$ is a $K$-algebra automorphism 
and $I$ and $J$ are ideals of $A$, then $$\{n\in \mathbb{Z}~:~\sigma^n(I)\supseteq J\}$$ 
has the two-sided SML property.
\end{thm}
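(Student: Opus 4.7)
The plan is to reduce the statement to Theorem~\ref{th41}, which is the $p$-adic polynomial ring case for $p$-reduced ideals with $p\ge 5$, via a chain of base-change steps. Throughout I will use the fact that the two-sided SML property is preserved under adding or removing finitely many integers, so I will feel free to discard finite exceptional sets.

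First, I would reduce to $K$ finitely generated over $\mathbb{Q}$. Because $A=K[x_1,\dots,x_d]$ is Noetherian, $I$ and $J$ have finitely many generators. The automorphism $\sigma$ and its inverse are each determined by the $d$ polynomials $\sigma(x_i)$ and $\sigma^{-1}(x_i)$. Let $K_0\subset K$ be the subfield of $K$ generated over $\mathbb{Q}$ by the finitely many coefficients appearing in all of this data. Then $\sigma$ restricts to a $K_0$-algebra automorphism $\sigma_0$ of $A_0:=K_0[x_1,\dots,x_d]$, with ideals $I_0,J_0$ whose generators coincide with those of $I,J$. Since $A=A_0\otimes_{K_0} K$ is faithfully flat over $A_0$, we have $\sigma^n(I)\supseteq J$ if and only if $\sigma_0^n(I_0)\supseteq J_0$. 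Thus $S(I,J)=S(I_0,J_0)$, and we may replace $K$ by the finitely generated field $K_0$.

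Next, I would apply Lemma~\ref{le31} to the finite set $\mathcal{S}$ consisting of all nonzero coefficients of the generators of $I,J$ and of $\sigma(x_i),\sigma^{-1}(x_i)$. This gives infinitely many primes $p\ge 5$ and embeddings $K\hookrightarrow \mathbb{Q}_p$ such that every element of $\mathcal{S}$ is sent to a $p$-adic unit. Fix one such $p$ and embedding. Base-changing along $K\hookrightarrow \mathbb{Q}_p$ yields an automorphism $\hat\sigma$ of $B=\mathbb{Q}_p[x_1,\dots,x_d]$ and ideals $\hat I,\hat J$ of $B$; again by flatness (now of $B/A_0$), $S(I,J)=S(\hat I,\hat J)$. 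A uniform rescaling $x_i\mapsto p^Nx_i$ for $N$ sufficiently large transports $\hat\sigma$ and $\hat\sigma^{-1}$ to polynomials with $\mathbb{Z}_p$-coefficients, so $\hat\sigma$ restricts to a $\mathbb{Z}_p$-algebra automorphism $\tilde\sigma$ of $C=\mathbb{Z}_p[x_1,\dots,x_d]$; one can also arrange that the generators of $\hat I,\hat J$ lie in $C$, producing ideals $\tilde I,\tilde J$ of $C$ with $\tilde I\otimes_{\mathbb{Z}_p}\mathbb{Q}_p=\hat I$ and similarly for $J$.

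Finally, I would replace $\tilde I$ and $\tilde J$ by $p$-reduced ideals $\tilde I^{(p)}$ and $\tilde J^{(p)}$ in the sense of Definition~\ref{def42}, in such a way that the containment $\tilde\sigma^n(\tilde I)\supseteq \tilde J$ in $C$ is equivalent, after extending scalars back to $\mathbb{Q}_p$, to $\tilde\sigma^n(\tilde I^{(p)})\supseteq \tilde J^{(p)}$. Since both $\tilde I^{(p)}$ and $\tilde J^{(p)}$ are $\tilde\sigma$-stable under this equivalence and lose no information after tensoring with $\mathbb{Q}_p$, the set $S(\tilde I,\tilde J)$ agrees, possibly up to a finite symmetric difference, with $S(\tilde I^{(p)},\tilde J^{(p)})$. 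Theorem~\ref{th41} then says the latter is a two-sided SML set, and absorbing the finite discrepancy finishes the proof. The main obstacle I expect is the last step: carefully defining the $p$-reduction of an ideal of $C$ so that ideal containments in $C\otimes_{\mathbb{Z}_p}\mathbb{Q}_p$ are faithfully detected on the $p$-reduced level, while preserving $\tilde\sigma$-equivariance. The infinite-dimensional case, where $(C/\tilde I^{(p)})\otimes\mathbb{Q}_p$ is infinite-dimensional over $\mathbb{Q}_p$, is expected to require an Amitsur-style argument (Lemma~\ref{lem: amitsur}) inside the proof of Theorem~\ref{th41}, but at the level of the reduction carried out here the hard technical content is already packaged into Theorems~\ref{thAA} and~\ref{th41}.
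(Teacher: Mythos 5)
Your plan follows the same route as the paper's proof: reduce to a finitely generated subfield $K_0$, embed into $\QQ_p$ via Lemma~\ref{le31}, pass to $\ZZ_p$-coefficients and $p$-reduced ideals, then invoke Theorem~\ref{th41}. That is the right skeleton, and the first reduction (faithful flatness of $K[\bx]$ over $K_0[\bx]$) is a perfectly good alternative to the paper's generator-chasing argument. However, two points deserve attention.

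First, the rescaling $x_i\mapsto p^N x_i$ does not do what you claim, and in fact is not needed. Conjugating $\hat\sigma$ by $x_i\mapsto p^N x_i$ multiplies the coefficient of a degree-$\alpha$ monomial in $\hat\sigma(x_i)$ by $p^{N(|\alpha|-1)}$, which clears denominators in degrees $\geq 1$ but makes the constant term worse; the opposite conjugation has the opposite problem. There is no single scaling that integralizes both $\hat\sigma$ and $\hat\sigma^{-1}$ in general. The correct observation is that the rescaling is unnecessary: by choosing $\sS$ in Lemma~\ref{le31} to include every nonzero coefficient of $\sigma(x_i)$, $\sigma^{-1}(x_i)$, and the chosen generators of $I$ and $J$, the embedding already lands all these coefficients in $\ZZ_p^\times\subset\ZZ_p$, so $\hat\sigma$ and $\hat\sigma^{-1}$ restrict to $\ZZ_p$-algebra endomorphisms of $\ZZ_p[x_1,\ldots,x_d]$ whose composite is the identity, hence $\hat\sigma$ restricts to an automorphism of $\ZZ_p[\bx]$ on the nose.

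Second, the ``hard technical content'' you anticipate in the final $p$-reduction step is actually handled in one line in the paper: set $\widetilde I=I'\cap\ZZ_p[x_1,\ldots,x_d]$ and $\widetilde J=J'\cap\ZZ_p[x_1,\ldots,x_d]$, where $I',J'$ are the $\QQ_p$-extended ideals. These are automatically $p$-reduced (if $px\in\widetilde I$ then $x=p^{-1}(px)\in I'$ and $x\in\ZZ_p[\bx]$, so $x\in\widetilde I$), and one has $\{m:\taux^m(I')\subseteq J'\}=\{m:\taux_0^m(\widetilde I)\subseteq\widetilde J\}$ \emph{exactly}, not merely up to a finite symmetric difference: one containment follows from $\widetilde I\otimes_{\ZZ_p}\QQ_p=I'$, and the other from intersecting the $\QQ_p$-containment back with $\ZZ_p[\bx]$. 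The $\sigma$-equivariance you worry about is automatic since $\sigma$ preserves both $\ZZ_p[\bx]$ and $\QQ_p[\bx]$. One should also note the hypothesis $\widetilde I\cap\ZZ_p=\widetilde J\cap\ZZ_p=(0)$ in Theorem~\ref{th41}: it holds because one may assume $I$ and $J$ are proper (the cases $I=A$ or $J=A$ give $S=\ZZ$ or $S\in\{\ZZ,\emptyset\}$ directly), so $I',J'$ are proper in $\QQ_p[\bx]$ and cannot contain a nonzero scalar.
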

In Section \ref{sec5} we  will deduce the main   Theorem \ref{thm:main} from this result.  
We establish Theorem \ref{thm: polycase} using a reduction to  the ring $\ZZ_p[x_1, \ldots, x_d]$, given 
below (Theorem  \ref{th41}). 
A key ingredient in the proof of Theorem \ref{th41} will be the $p$-adic analytic arc theorem proved earlier, see
Proposition \ref{pr47}.

In restricting ideals from a ring with $\QQ_p$-coefficients to one with $\ZZ_p$-coefficients, we only need to
consider the following special subclass of ideals in $\ZZ_p[x_1, x_2, \ldots, x_d]$, as follows.


\begin{defi}\label{def42}
{\em
Let $p$ be a prime and let $A$ be a $\mathbb{Z}_p$-algebra.  We say that a proper ideal $I$ of $A$ is $p$-\emph{reduced}
 if whenever $x\in A$ has the property that $px\in I$, we necessarily have $x\in I$.  }
\end{defi}

 The condition that an ideal $I$ of a $\mathbb{Z}_p$-algebra $A$ be $p$-reduced is
imposed to ensure that the quotient ring $A/I$ is torsion-free as a $\mathbb{Z}_p$-module.  

\begin{prop} \label{pr47}
Let $p$ be prime, and let
$I$ and $J$ be two $p$-reduced ideals in $\bZ[x_1,\ldots ,x_d]$.
Suppose that 
$(\bZ[x_1,\ldots ,x_d]/I)\otimes_{\bZ} \bQ$ is 
finite dimensional as a $\bQ$-vector space.
If $\sigma$ is a $\bZ$-algebra automorphism  of
$\bZ[x_1,\ldots ,x_d]$, then
 the set
$\{ n \in \mathbb{Z}~|~
\sigma^{n}(I)\supseteq J\}$
has the two-sided SML property.
\end{prop}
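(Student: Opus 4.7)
The plan is to reduce Proposition~\ref{pr47} to a single-polynomial statement, stratify $\mathbb{Z}$ into finitely many arithmetic progressions on which the generalized $p$-adic analytic arc theorem applies, and then invoke Extended Strassman on each. Throughout I tacitly assume $p\ge 5$, since Theorem~\ref{thAA} requires this. Write $A := \bZ[x_1,\ldots,x_d]$, $B := A/I$, and $R := B \otimes_{\bZ} \bQ$; the $p$-reducedness of $I$ guarantees that $B$ is $\bZ$-torsion-free, so $B \hookrightarrow R$ and $R$ is a finite-dimensional $\bQ$-algebra by hypothesis. Using Noetherianity of $A$ to write $J = (g_1,\ldots,g_r)$, one has
\[
\{n \in \mathbb{Z} : \sigma^{n}(I) \supseteq J\} \;=\; \bigcap_{j=1}^{r}\{n \in \mathbb{Z} : \sigma^{-n}(g_j)\in I\},
\]
and since two-sided SML sets are closed under negation and under finite intersections, it suffices to show that $T_g := \{m \in \mathbb{Z} : \sigma^{m}(g) \in I\}$ is two-sided SML for each single $g \in A$. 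Setting $\bs_0 := (\bar{x}_1,\ldots,\bar{x}_d) \in B^d$ and noting that $(f_\sigma)^{m}(\bs_0) = (\overline{\sigma^{m}(x_1)},\ldots,\overline{\sigma^{m}(x_d)})$, the condition $\sigma^{m}(g)\in I$ becomes $g\bigl((f_\sigma)^{m}(\bs_0)\bigr) = 0$ in $B$.

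Next I would stratify $T_g$ using the finite dynamics on $(B/pB)^d$. By Lemma~\ref{le22}, $B/pB$ is a finite-dimensional $\mathbb{F}_p$-vector space and in particular a finite set, so since $\bar{f}_\sigma$ acts bijectively the orbit of $\bs_0$ is a cycle of some length $N$. For each $r \in \{0,\ldots,N-1\}$ the shifted point $\bs_0^{(r)} := (f_\sigma)^{r}(\bs_0)$ is fixed by $\bar{f}_{\sigma^N}$. Applying Lemma~\ref{lem: j} to the automorphism $\sigma^{N}$ then supplies an integer $M \ge 1$ such that the Jacobian $J(\sigma^{NM};\bs_0^{(r)})$ is the identity matrix modulo $pB$ for every such $r$ simultaneously. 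Refining once more by $\bs_0^{(r,s)} := (f_{\sigma^{N}})^{s}(\bs_0^{(r)})$ for $0 \le s < M$, each $\bs_0^{(r,s)}$ satisfies $\bs_0^{(r,s)} \equiv \bs_0^{(r)} \pmod{pB}$, hence is a fixed point of $\bar{f}_{\sigma^{NM}}$ with identity Jacobian mod $pB$. Thus hypotheses (i) and (ii) of Theorem~\ref{thAA} are met for the pair $(\sigma^{NM}, \bs_0^{(r,s)})$, and the arc theorem produces power series $f_1(z),\ldots,f_d(z) \in R[[z]]$, absolutely convergent on $\bZ$, that interpolate the orbit in the sense $(f_1(k),\ldots,f_d(k)) = (f_{\sigma^{NM}})^{k}(\bs_0^{(r,s)})$ for every $k \in \mathbb{Z}$.

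Now form $G(z) := g\bigl(f_1(z),\ldots,f_d(z)\bigr) \in R[[z]]$, an absolutely convergent series on $\bZ$ whose coefficient ring $R$ is a finite-dimensional $\bQ$-algebra. Because $B$ embeds in $R$, the equation $G(k) = 0$ is equivalent to $\sigma^{r + Ns + NMk}(g) \in I$. The Extended Strassman Theorem (Theorem~\ref{thm: Strassman}) then forces the dichotomy: either $G \equiv 0$, in which case the complete arithmetic progression $r + Ns + NM\mathbb{Z}$ lies entirely in $T_g$, or $G$ has only finitely many zeros in $\bZ$, in which case $T_g$ meets this progression in only finitely many elements. Taking the union over the $NM$ pairs $(r,s)$ exhibits $T_g$ as a two-sided SML set, and intersecting over the generators of $J$ completes the proof.

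The main technical obstacle I anticipate is the simultaneous engineering of both hypotheses of Theorem~\ref{thAA} at a single base point: generic $\bs_0$ is neither a fixed point of $\bar{f}_\sigma$ nor does it have trivial Jacobian mod $p$, and it is precisely the double passage through the powers $N$ (to achieve periodicity) and $M$ (from Lemma~\ref{lem: j}, to trivialize the Jacobian) that yields the arithmetic-progression structure characteristic of two-sided SML sets. The finite-dimensionality of $R$ then enters at exactly one essential place, namely as the coefficient ring to which Extended Strassman may be applied, which is why the hypothesis is imposed. Finally, the $p$-reduced condition on $I$ plays a dual role: it furnishes the torsion-freeness of $B$ required to embed $B$ into $R$, and it ensures that vanishing in $R$ faithfully detects vanishing in $B$, so Strassman's conclusion translates back to the desired ideal membership.
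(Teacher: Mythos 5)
Your proposal is correct and follows essentially the same route as the paper's: pass to $S := \bZ[x_1,\ldots,x_d]/I$ (torsion-free by $p$-reducedness, with $S/pS$ finite), stratify $\mathbb{Z}$ into residue classes using a mod-$p$ period together with the Jacobian-trivializing period from Lemma~\ref{lem: j}, interpolate each class with Theorem~\ref{thAA}, and conclude via the Extended Strassman Theorem. The only immaterial differences are that you reduce to a single generator of $J$ before stratifying (the paper stratifies by residue class first and intersects over the generators inside that loop) and that you use the orbit period of $\bs_0$ on $(S/pS)^d$ rather than a universal period for all points.
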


\noindent \begin{proof} 
For a polynomial automorphism  $\sigma^n(I)\supseteq J$ if and only if $\sigma^{-n}(J)\subseteq I$.  Thus,
replacing $\sigma$ with $\sigma^{-1}$,  it is sufficient to show that 
$$\{ n\in \mathbb{Z}~:~\sigma^n(J)\subseteq I\}$$ is a finite union of  complete
arithmetic
progressions and a finite set.
Let $S:=\bZ[x_1,\ldots ,x_d]/I$. 
The ring $S$ is a finitely generated and torsion-free $\ZZ$-algebra,  since $I$ is $p$-reduced.
Write 
$\sigma=(F_1,\ldots ,F_d)\in \bZ[x_1,\ldots ,x_d]^d$.
The  automorphism $\sigma$ induces the dynamical evaluation map  $f_{\sigma}:S^d \to S^d$, given by 
$(s_1,\ldots ,s_d)\mapsto 
(F_1(s_1,\ldots ,s_d),\ldots ,F_d(s_1,\ldots ,s_d))$, for $s_i \in S$,
which is a (nonlinear) bijection on $S^d$.
Since $S$ is a torsion-free $\bZ$-algebra and since, by hypothesis,
$S\otimes_{\bZ} \bQ$ is finite-dimensional,
we have  $S^d/pS^d$ is  a finite ring.  Although $f_{\sigma}$ is  nonlinear, we have 
$$
f_{\sigma}(s+ pS^d)\subseteq f_{\sigma}(s)+ pS^d,
$$ 
 hence $f_{\sigma}$ induces a well-defined bijective map of $S^d/pS^d$. 
 Since $S/pS$ is finite, 
there exists a positive integer $a$
such that $f_{\sigma}^a({\bf s})\equiv {\bf s}~(\bmod~ pS^d)$ 
for all ${\bf s}\in S^d$.

Now set $\tauu = \sigma^{ma}$, where $m$ is chosen as in the statement of Lemma \ref{lem: j}, 
and write
$\tauu=(H_1,\ldots ,H_d)$.  
Then for any ${\bf s}\in S^d$, 
we have $f_{\tauu}({\bf s})\equiv {\bf s} ~(\bmod ~pS^d)$ and by Lemma \ref{lem: j},
the 
Jacobian matrix of $\tauu$ is the identity matrix mod $pS$ 
when evaluated at any point in $S^d$.  
For  any polynomial $P(x_1,\ldots ,x_d)\in \bZ[x_1,\ldots ,x_d]$
and for any $k$ with $0 \le k \le ma-1$, the fact that $\sigma$ and $\tauu$ are endmorphisms gives
(via \eqref{201})
\[
\tauu^n(\sigma^{k}(P(x_1,\ldots ,x_d)))\in I~~
\mbox{if and only if}~~
P\circ f_{\tauu^n}\circ f_{\sigma^{k}}(s_1,\ldots ,s_d)=0,
\]
where $s_i :=x_i+I$. We now treat each $k$ separately.
By construction, 
\[
f_{\tauu}\circ f_{\sigma^{k}}(s_1,\ldots ,s_d)\equiv f_{\sigma^{k}}(s_1,\ldots ,s_d) ~(\bmod ~pS),
\]
 and the Jacobian matrix $J(\tauu,f_{\sigma^{k}}(s_1,\ldots ,s_d))$
 is congruent to the identity mod $pS$.
Now the generalized $p$-adic analytic arc theorem (Theorem \ref{thAA})
applies to show there exist power series 
$$f_1(z) ,\ldots ,f_d(z)\in (S\otimes_{\bZ} \bQ)[[z]]$$
which are analytic on $\ZZ_p$  and 
which satisfy $(f_1(0),\ldots ,f_d(0))=f_{\sigma^{k}}(s_1,\ldots ,s_d)$ and
$$
f_i(z+1) \ = \ H_i(f_1(z),\ldots ,f_d(z))\qquad {\rm for}~1\le i\le d.
$$  
By construction,
$f_{\tauu^n}\circ f_{\sigma^{k}}(s_1,\ldots ,s_d))=(f_1(n),\ldots ,f_d(n))$ 
for all $n\in {\mathbb N}$.
Next,  select a generating set for the ideal $J$, 
$$
J=\langle P_1(x_1,\ldots ,x_d),\ldots ,P_m(x_1,\ldots ,x_d)\rangle.
$$
Then for  $1\le i\le m$, define
$$
g_i(z)\ := \ P_i(f_1(z),\ldots ,f_d(z))\in 
(S\otimes_{\bZ} {\mathbb Q}_p)[[z]].
$$
Since $f_1,\ldots ,f_d$ are analytic on $\ZZ_p$ 
and $P_1,\ldots ,P_m \in \ZZ_p[x_1,\ldots , x_d]$,
each  $g_i(z)$ is
a power series which converges on $\ZZ_p$, for  $1\le i\le d$.
Moreover, 
$g_i(n)=0$ if and only if $\tauu^n\circ \sigma^{k}(P_i(x_1,\ldots ,x_d))\in I$.  
Notice if $g_i(n)=0$ for infinitely many 
integers $n$, then it is identically zero by
Theorem \ref{thm: Strassman}. Thus 
$$
\{ n\in \mathbb{Z}~|~ \tauu^n(\sigma^{k})(J) \subseteq I\}
$$
is either a finite set or is all of $\mathbb{Z}$.
Since this holds for $0 \le k  \le ma_1,$  result now follows. $~~~$ \end{proof}

\noindent 
We must also treat the case that $\left( \ZZ_p[x_1,\ldots ,x_d]/I\right) \otimes \QQ_p$   is
an infinite-dimensional $\QQ_p$-vector space.  This is handled
by a reduction to the preceding proposition. 
To do this reduction, we use the following lemma,
based on an idea of  Amitsur \cite[Lemma 4, p. 41]{Am56}.

\begin{lem} \label{lem: amitsur}
Let $K$ be an uncountable field and let $A$ be a finitely generated 
commutative $K$-algebra.  Suppose that there exists a countable set of ideals 
$\{I_i~|~i\ge 1\}$ of $A$ such that each ideal $L$ of $A$ 
of finite codimension contains one of the $I_i$.  
Then there exists a finite set $j_1,\ldots ,j_d$ such that $$I_{j_1}\cap \cdots\cap  I_{j_d}=(0).$$ 
\end{lem}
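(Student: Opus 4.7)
I would prove this lemma in three steps, with the uncountability playing its essential role in the third.

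\emph{Step 1: Show $\bigcap_{i\ge 1} I_i = (0)$.} The key general fact I would use is that in any finitely generated commutative $K$-algebra $A$, every nonzero element lies outside some finite-codimension ideal. Given $0\ne a\in A$, the proper ideal $\operatorname{Ann}(a)$ is contained in some maximal ideal $\mathfrak{m}$, so $a$ has nonzero image in the Noetherian local ring $A_\mathfrak{m}$; by Krull's intersection theorem $a\notin \mathfrak{m}^N A_\mathfrak{m}$ for some $N$. The pullback $L := \mathfrak{m}^N A_\mathfrak{m}\cap A$ is of finite codimension (since $A/L$ embeds into the Artinian ring $A_\mathfrak{m}/\mathfrak{m}^N A_\mathfrak{m}$, finite-dimensional over $K$ by Hilbert's Nullstellensatz) and does not contain $a$. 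The hypothesis then produces some $I_j\subseteq L$ with $a\notin I_j$, so no nonzero element of $A$ lies in $\bigcap_i I_i$.

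\emph{Step 2: Reduce to a descending chain.} Replacing $\{I_i\}$ by the sequence of finite intersections $J_n := I_1\cap\cdots\cap I_n$ preserves the hypothesis (any finite-codimension $L\supseteq I_k$ also contains $J_k$) and is equivalent for the conclusion sought. Hence I may assume $I_1\supseteq I_2\supseteq\cdots$, so that the task reduces to showing some $I_n = (0)$. Suppose for contradiction that every $I_n\neq (0)$.

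\emph{Step 3: Apply uncountability.} This is the main obstacle. The crucial observation is that whenever $A$ is not Artinian, $A$ has at least $|K|$ many maximal ideals: by Noether normalization there is a module-finite inclusion $K[y_1,\ldots,y_d]\hookrightarrow A$ with $d=\dim A\ge 1$, and for each $c\in K^d$ the maximal ideal $(y_1-c_1,\ldots,y_d-c_d)$ lifts by going-up to a maximal ideal $\mathfrak{M}_c$ of $A$, yielding at least $|K|$ distinct maximal ideals. The hypothesis restricted to these gives a map $\{\mathfrak{M}_c\}\to \mathbb{N}$, and pigeonhole against the countable index set produces some $i^*$ with $I_{i^*}\subseteq \mathfrak{M}_c$ for $c$ in an uncountable subset $S\subseteq K^d$. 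A further pigeonhole refinement of $S$ with all but one coordinate frozen makes the image of some $y_k$ in $A/I_{i^*}$ take uncountably many distinct values in the residue fields, forcing $y_k$ to be transcendental over $K$ in $A/I_{i^*}$; when $A$ is a domain this yields the strict drop $\dim A/I_{i^*} < \dim A$. Iterating this analysis inside successive quotients (using Noether normalization of each new quotient and, in the non-domain case, a preliminary reduction via the primary decomposition of $(0)$ in $A$ so that one may work inside each primary quotient $A/Q_j$ separately) produces in at most $d$ steps a successive quotient that is Artinian; in the Artinian case the hypothesis together with the DCC forces some member of the family to vanish in the quotient, and by carefully tracking back through the finitely many ideals added in the iteration, one extracts a finite subset of the original $\{I_i\}$ whose intersection is $(0)$, contradicting the assumption. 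The delicate bookkeeping in this iterative transcendence-and-dimension extraction is the heart of the argument.
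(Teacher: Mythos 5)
Your Steps 1 and 2 are sound, and Step 1 in particular is a nice observation that the paper doesn't need (since the paper never shows $\bigcap_i I_i = (0)$ directly). But Step 3, which you yourself flag as ``the heart of the argument,'' contains a genuine gap. The paper's own route is quite different: it observes that since $A$ is a finitely generated $K$-algebra it is Noetherian and Jacobson, so $J(A)^m = (0)$ for some $m$ and every $M^m$ (for $M$ maximal) has finite codimension, hence contains some $I_i$. It then passes to the auxiliary \emph{radical} ideals $P_i := \bigcap\{M : M^m \supseteq I_i\}$ and $Q_i := \bigcap\{M^m : M^m \supseteq I_i\}$, notes that every maximal ideal contains some $P_i$, and invokes an explicit result of Amitsur (\cite[Cor.~4, p.~40]{Am56}) which says: over an uncountable field, if countably many ideals $P_i$ of a finitely generated algebra are such that every maximal ideal contains one of them, then a finite subfamily $P_{j_1},\dots,P_{j_d}$ already has $P_{j_1}\cap\cdots\cap P_{j_d}\subseteq J(A)$. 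The conclusion $I_{j_1}\cap\cdots\cap I_{j_d}\subseteq Q_{j_1}\cap\cdots\cap Q_{j_d}=P_{j_1}^m\cap\cdots\cap P_{j_d}^m\subseteq J(A)^m=(0)$ follows.

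Your Step 3 is, in effect, an attempt to reprove Amitsur's covering lemma from scratch, and as written it does not go through. Three specific problems. First, the deduction ``$y_k+I_{i^*}$ is transcendental over $K$ in $A/I_{i^*}$, hence $\dim A/I_{i^*}<\dim A$'' is the wrong implication: transcendence of $y_k$ in the quotient gives a \emph{lower} bound on $\dim A/I_{i^*}$ (the quotient contains a polynomial ring), not an upper bound, and in no way forces the dimension to drop below $\dim A$. Second, even granting a dimension drop, it is unclear what the induction is aimed at: your target is to show some $I_n=(0)$, i.e.\ a statement about an \emph{intersection}, but the iteration passes to successive \emph{quotients} $A/I_{i^*}$, $\big(A/I_{i^*}\big)/\!\ldots$, and there is no mechanism explained for converting ``some image ideal vanishes in a deep quotient'' back into ``a finite intersection of the original $I_i$ is zero'' — the phrase ``by carefully tracking back through the finitely many ideals added in the iteration'' is exactly where the real content of Amitsur's lemma would have to live, and it is left unjustified. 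Third, the preliminary reduction via primary decomposition in the non-domain case is also only gestured at. Until these are filled in, Step 3 is a sketch of a hoped-for argument rather than a proof; citing and applying Amitsur's result directly, as the paper does, avoids all of this.
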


\noindent \begin{proof}
 Let $J(A)$ denote the Jacobson radical 
of $A$.  Since $A$ is a finitely generated $K$-algebra, 
$A$ is a Jacobson ring (that is, every prime ideal of $A$ is the intersection of the maximal ideals containing it)
 \cite[Theorem 4.19]{Eis95}.  Thus $J(A)$ is the intersection of the prime ideals of $A$ (that is, it is the nilradical of $A$).
    By the Hilbert basis theorem $A$ is Noetherian and hence some power of the nilradical  of $A$
     is zero \cite[Corollary 7.15]{AMc}.   Hence there is some $m\ge 1$ such that $J(A)^m=(0)$.
Given a maximal ideal $M$ of $A$, note that there is 
some $i$ such that $ I_i \subseteq M^m$.  Let $Q_i$ denote the intersection of all ideals of the form $M^m$, where $M$ is a maximal ideal of $A$ with the property that $M^m\supseteq I_i$.  Similarly, we let $P_i$ denote the intersection of all ideals maximal ideals $M$ with the property that $M^m\supseteq I_i$.  Then $P_i^m=Q_i\supseteq I_i$ for each $i\ge 1$.

Each maximal ideal $M$ of $A$ necessarily contains some $P_i$.  A result of Amitsur 
(cf. \cite[Corollary 4, p. 40]{Am56}) shows that there exist $j_1,\ldots ,j_d$ such that $P_{j_1}\cap P_{j_2}\cap \cdots\cap P_{j_d}$ is contained in
the Jacobson radical $J(A)$ of $A$.  Hence $$I_{j_1}\cap \cdots\cap I_{j_d}\subseteq Q_{j_1}\cap \cdots \cap Q_{j_d}=P_{j_1}^m\cap \cdots \cap P_{j_d}^m\subseteq J(A)^m=(0),$$ 
as required.   \end{proof} \smallskip

\begin{thm} ~\label{th41}
Let $p\ge 5$ be prime, let
$I$ and $J$ be two $p$-reduced ideals in 
$\bZ[x_1,\ldots ,x_d]$ with $I\cap \mathbb{Z}_p=J\cap \mathbb{Z}_p=(0).$
If $\Es$ is  a $\bZ$-algebra automorphism of
$\bZ[x_1,\ldots ,x_d]$,
then the set 
\[
\{ m \in \mathbb{Z}~|~
\Es^{m}(I)\supseteq J\}
\] 
is the union of a finite number of complete doubly-infinite arithmetic
progressions, up to the addition of a finite set.
\end{thm}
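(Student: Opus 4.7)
The strategy is to reduce Theorem~\ref{th41} to Proposition~\ref{pr47} using Amitsur's Lemma~\ref{lem: amitsur}. Proposition~\ref{pr47} already handles the case where $B_{\bQ} := (A/I) \otimes_{\bZ} \bQ$ is finite-dimensional over $\bQ$, with $A = \bZ[x_1, \ldots, x_d]$; when $B_{\bQ}$ is infinite-dimensional, I plan to produce $p$-reduced ideals $L_1, \ldots, L_n$ of $A$, each containing $I$ and each with $(A/L_i) \otimes_{\bZ} \bQ$ finite-dimensional over $\bQ$, such that $I = L_1 \cap \cdots \cap L_n$.

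These $L_i$ will be obtained as pull-backs of finite-codimensional ideals $\widetilde{L}_1, \ldots, \widetilde{L}_n$ of $B_{\bQ}$ whose intersection is $(0)$. Lemma~\ref{lem: amitsur}, applied to the finitely generated commutative $\bQ$-algebra $B_{\bQ}$ (exploiting the uncountability of $\bQ$), is intended to produce such a finite collection, provided one first exhibits a suitable countable family of ideals of $B_{\bQ}$ such that every finite-codimensional ideal contains one of them. Granted the $\widetilde{L}_i$, the $L_i$ arise as their preimages under the natural surjection $A \twoheadrightarrow B_{\bQ}$: $p$-reducedness of each $L_i$ is automatic because $p$ is a unit in $B_{\bQ}$, the quotient $(A/L_i) \otimes_{\bZ} \bQ \cong B_{\bQ}/\widetilde{L}_i$ is finite-dimensional by construction, and $I = L_1 \cap \cdots \cap L_n$ follows from $\widetilde{L}_1 \cap \cdots \cap \widetilde{L}_n = (0)$.

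Since $\sigma$ is an automorphism, $\sigma^m$ commutes with finite intersections of ideals, so $\sigma^m(I) = \bigcap_i \sigma^m(L_i)$, and hence
\[
\{m \in \ZZ : \sigma^m(I) \supseteq J\} \;=\; \bigcap_{i=1}^{n} \{m \in \ZZ : \sigma^m(L_i) \supseteq J\}.
\]
Each set on the right has the two-sided SML property by Proposition~\ref{pr47} applied to the pair $(L_i, J)$ of $p$-reduced ideals, noting that properness and $p$-reducedness of $L_i$ force $L_i \cap \bZ = (0)$. A finite intersection of two-sided SML sets is itself two-sided SML, since intersections of finitely many complete arithmetic progressions in $\ZZ$ are complete arithmetic progressions or empty (Chinese Remainder Theorem). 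The main obstacle is thus the application of Lemma~\ref{lem: amitsur}: one must identify a countable family of ideals of $B_{\bQ}$ verifying its hypothesis and arrange that the finite subfamily it delivers consists of finite-codimensional ideals whose intersection is $(0)$. This step exploits both the Noetherian structure of $B_{\bQ}$ (so that its Jacobson radical is nilpotent) and the uncountability of the base field $\bQ$.
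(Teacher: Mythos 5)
Your broad strategy---intersect ideals to reduce to the finite-codimensional case treated in Proposition~\ref{pr47}, and use Lemma~\ref{lem: amitsur} to produce the finite intersection---matches the paper's. However, the specific decomposition you ask for does not exist. You want $I = L_1 \cap \cdots \cap L_n$ with each $L_i$ $p$-reduced, containing $I$, and with $(A/L_i) \otimes_{\mathbb{Z}_p} \mathbb{Q}_p$ finite-dimensional. Take $A = \mathbb{Z}_p[x]$ and $I=(0)$, which is $p$-reduced (the ring is a domain) and satisfies $I\cap\mathbb{Z}_p=(0)$, so it is an allowable input to Theorem~\ref{th41}. Each $L_i$ would extend to a nonzero ideal $L_i\mathbb{Q}_p[x]$ of the domain $\mathbb{Q}_p[x]$, so $\bigcap_i L_i\mathbb{Q}_p[x] \neq (0)$ and hence $\bigcap_i L_i \neq (0)=I$. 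The same obstruction occurs whenever $V(I\mathbb{Q}_p[x_1,\ldots,x_d])$ has positive dimension, which is the whole point of needing Lemma~\ref{lem: amitsur} in the first place. Crucially, Lemma~\ref{lem: amitsur} does \emph{not} deliver finite-codimensional ideals: the $I_i$ in its hypothesis only need to sit inside every finite-codimensional ideal, and the finite subfamily it produces inherits no codimension bound. Demanding that the subfamily consist of finite-codimensional ideals is precisely what cannot be arranged.

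The paper's route avoids this by indexing the auxiliary ideals by \emph{candidate answer sets} rather than by finite-codimensional ideals. Let $\mathcal{T}$ be the countable family of two-sided SML subsets of $\ZZ$. For each $T\in\mathcal{T}$ one sets $I_T$ equal to the intersection of \emph{all} $p$-reduced ideals $L\supseteq I$ of $\mathbb{Z}_p[x_1,\ldots,x_d]$ with $(A/L)\otimes\mathbb{Q}_p$ finite-dimensional and $\{n:\sigma^n(L)\supseteq J\}=T$. These $I_T$ are typically of infinite codimension, but since $\sigma$ is an automorphism and so commutes with arbitrary intersections, one checks directly that $\{n:\sigma^n(I_T)\supseteq J\}=T$ whenever the defining intersection is nonempty. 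The countable family $\{I_T\}$ does satisfy the hypothesis of Lemma~\ref{lem: amitsur} over $\mathbb{Q}_p$: every finite-$\mathbb{Q}_p$-codimensional ideal $L\supseteq I$ has an answer set $T_L\in\mathcal{T}$ by Proposition~\ref{pr47} and therefore contains $I_{T_L}$. Amitsur then yields $T_1,\ldots,T_d$ with $I_{T_1}\cap\cdots\cap I_{T_d}=I$ after extending to $\mathbb{Q}_p$ and contracting back using $p$-reducedness, and the answer for $I$ is $T_1\cap\cdots\cap T_d\in\mathcal{T}$. The idea missing from your plan is to accept that the ideals in the finite intersection will not be finite-codimensional, and instead to organize them by the answer set $T$ they induce.
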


\begin{proof}
Let $\mathcal{T}$ denote the collection of subsets of 
$\mathbb{Z}$ which are finite union of complete doubly-infinite arithmetic progressions 
along with a finite set.  Then $\mathcal{T}$ is countable.  
Suppose first that $I\subseteq \bZ[x_1,\ldots ,x_d]$ is a $p$-reduced 
ideal with the property that
$(\bZ[x_1,\ldots ,x_d]/I)\otimes_{\bZ} \bQ$ is 
finite-dimensional as a $\bQ$-vector space.  
Then by Proposition \ref{pr47}, 
we have
\[
\{ n\in \mathbb{Z}~|~\sigma^n(I)\supseteq J \} \ \in \ \mathcal{T}.
\]  
We now treat the general case.  
Given $T\in \mathcal{T}$, let 
$I_T$ denote the intersection of all $p$-reduced ideals 
$L$ that contain $I$ and such that:
\begin{enumerate}
\item $L\cap \mathbb{Z}_p = (0)$;
\item $\left(\bZ[x_1,\ldots ,x_d]/L\right)\otimes_{\bZ} \bQ$ is a 
finite-dimensional $\bQ$-vector space;
\item $T= \{ n\in \mathbb{Z}~|~\sigma^n(L) \supseteq  J \}$.  
\end{enumerate}

Suppose first that $T\in \mathcal{T}$ is such that $I_T$ is a non-empty intersection.
We first note that if $n\in T$, 
then
$\sigma^n(I_T)\supseteq J$; moreover $I_T$ is an intersection of ideals which 
contain $I$ and hence it contains $I$.  If $n\not\in T$,
then $\sigma^n(I)$ cannot contain $J$, since by definition
$\sigma^n(I_T)\not \supseteq J$ and $I\subseteq I_T$.  Thus if $I_T$ is a non-empty intersection, then 
$\{n~|~   \sigma^n(I_T) \supseteq J\}=T$.  

We next note that the collection of ideals
\[\{I_T~|~T\in \mathcal{T}, I_T~{\rm is~a~nonempty~intersection}\}\]
in $\bZ[x_1,\ldots ,x_d]$ is countable.  
Consider the ideals $I_T' = I_T\bQ[x_1,\ldots ,x_d]$.  
By Lemma \ref{lem: amitsur} one of the following holds; either:
\begin{enumerate}
\item there exist $T_1,\ldots ,T_d\in \mathcal{T}$ such that $I_{T_1}'\cap \cdots 
\cap I_{T_d}' \subseteq I \bQ[x_1,\ldots ,x_d]$;  or 
\item there is an ideal $L\supset I\bQ[x_1,\ldots ,x_d]$ of 
$\bQ[x_1,\ldots ,x_d]$ of finite codimension which does not contain 
any of the non-empty $I_T'$. 
\end{enumerate}
In the first case, note that $T_0=T_1\cap \cdots \cap T_d\in \mathcal{T}$ and 
by definition, $I_{T_0}'\subseteq I_{T_1}'\cap \cdots \cap I_{T_d}' \subseteq I\bQ[x_1,\ldots ,x_d] $.
Since $I\bQ[x_1,\ldots ,x_d]\subseteq I_{T_0}$, we see that
 $I\bQ[x_1,\ldots ,x_d]=I_{T_0}$.  It follows that the set of integers $n$ for which $\sigma^n(I)\supseteq J$ is exactly $T_0$. 

If, on the other hand, (2) holds, then
there exists an ideal $L$ of finite codimension which 
contains $I$ but doesn't contain any of the non-empty $I_T$; then  
$L_1=L\cap \mathbb{Z}_p[x_1,\ldots ,x_d]$ is $p$-reduced 
and satisfies
$\bZ[x_1,\ldots ,x_d]/L_1 \otimes_{\bZ} \bQ \cong \bQ[x_1,\ldots ,x_d]/L$ 
is finite-dimensional.  
Hence $I_T \subseteq L_1$ for some $T\in \mathcal{T}$, and so $I_T'\subseteq L$, a contradiction.  
We thus obtain the result. 
\end{proof}

Now we can prove Theorem \ref{thm: polycase}. 

\begin{proof}[Proof of Theorem \ref{thm: polycase}]
Let $K$ be a field of characteristic $0$ and let $A= K[x_1, \ldots , x_d]$, with 
$\sigma: A \to A$ a $K$-algebra automorphism, and let $I$ and $J $ 
be two proper  ideals of $A$.  Then there exist polynomials $F_1,\ldots, F_d, G_1,\ldots ,G_d$ in $A$ such that 
$\sigma(x_i)=F_i(x_1,\ldots ,x_d)$ and
 $\sigma^{-1}(x_i)=G_i(x_1,\ldots, x_d)$ for $i\in\{1,\ldots ,d\}$.  Next there exist polynomials $C_1,\ldots ,C_s$ in $A$ that generate $I$ as an ideal and polynomials $D_1,\ldots ,D_t$ in $A$ that generate $J$.  

We let $\sS$ denote the set of all nonzero elements of $K$ that occur as a coefficient of one of 
$F_1,\ldots ,F_d, G_1,\ldots ,G_d$ and $C_1,\ldots ,C_s, D_1,\ldots ,D_s$ and we let $R_0=\mathbb[ \sS]$ 
denote the finitely generated $\mathbb{Z}$-algebra generated by the elements of $\sS$ inside $K$ and 
we let $K_0$ denote the field of fractions of $R_0$.  By construction, 
$\sigma(R_0[x_1,\ldots ,x_d])\subseteq R_0[x_1,\ldots ,x_d]$ and 
$\sigma^{-1}(R_0[x_1,\ldots ,x_d])\subseteq R_0[x_1,\ldots ,x_d]$, and so
$\sigma$ restricts to an $R_0$-algebra automorphism of $R_0[x_1,\ldots ,x_d]$ and to a $K_0$-algebra automorphism of $K_0[x_1,\ldots ,x_d]$.   

We let 
$I_0:= I\cap K_0[x_1,\ldots ,x_d])$ and $J_0= J\cap K_0[x_1,\ldots ,x_d])$,
which are both $K_0[x_1, x_2,  \ldots, x_d]$- ideals.
We claim that:
\[
 \sigma^n(I)\supseteq J \quad  \mbox{ if and only if} \quad \sigma^n(I_0)\supseteq J_0.
\]
To see this, first observe that if $\sigma^n(I)\supseteq J$, then we have
$\sigma^n(I_0)\supseteq J_0$,
since $\sigma^n(K_0[x_1,\ldots ,x_d])=K_0[x_1,\ldots ,x_d]$.  
Next suppose that $n$ is an integer
 for which $\sigma^n(I_0)\supseteq J_0$. 
Note that $\sigma^n(I)$ is generated by $\sigma^n(C_1),\ldots ,\sigma^n(C_s)$ and each of these generators is in $K_0[x_1,\ldots ,x_d]$.  Since $K[x_1,\ldots ,x_d]$ is a free $K_0[x_1,\ldots ,x_d]$-module, we have that $\sigma^n(I_0)$ is generated by $\sigma^n(C_1),\ldots ,\sigma^n(C_s)$ as an ideal in
$K_0[x_1, x_2, \ldots , x_d]$.   By construction of $K_0$,  the generators $D_i$ of $J$ lie in
$J_0$ for $i\in \{1,\ldots ,t\}$ and so by assumption 
\[
D_i\in \sum_{i=1}^t K_0[x_1,\ldots ,x_d]\sigma^n(C_i) \subseteq \sigma^n(I)
\]
 for $i=1,\ldots ,t$.  It follows that $J\subseteq \sigma^n(I)$,
proving the claim.

By Lemma \ref{le31}, there exists a prime $p \ge 5$ such that $R_0$ embeds in $\mathbb{Z}_p$ as a $\mathbb{Z}$-algebra.  We identify $R_0$ with its image in $\mathbb{Z}_p$ and we similarly identify $K_0$ with its image in $\mathbb{Q}_p$.  
We then see that the restriction of $\sigma$ to $K_0[x_1,\ldots ,x_d]$ lifts to a $\mathbb{Q}_p$-algebra automorphism 
$\taux$ of $\QQ_p [x_1, \ldots , x_d]$, by identifying $\QQ_p [x_1, \ldots , x_d]$ with 
$(K_0[x_1,\ldots ,x_d]\otimes_{K_0} \mathbb{Q}_p)$ and then taking 
$\taux$ to be the map 
$\sigma\otimes {\rm id}$.
In addition, the restriction of $\sigma$ to $R_0[x_1,\ldots ,x_d]$ lifts to a $\mathbb{Z}_p$-algebra automorphism 
$\taux_0$ of $\ZZ_p [x_1, \ldots , x_d]\cong (R_0\otimes_{R_0} \mathbb{Z}_p)[x_1,\ldots ,x_d]$.  
Note that $\mathbb{Q}_p[x_1,\ldots ,x_d]$ is a free 
$K_0[x_1,\ldots ,x_d]$-module and hence the ideals $I_0$ and $J_0$ lift to ideals  
$I'$ and $J'$ of $ \QQ_p[x_1,  \ldots, x_d]$ respectively.
Moreover, since any basis for $\mathbb{Q}_p$ over $K_0$ is fixed by $\taux$, by freeness we have
$$
\{ m \in \ZZ: \Es^m(I_0) \supseteq J_0\} \equiv  \{m \in \ZZ: \taux^m(I') \subset
J' \}.
$$
Next,  we let 
$$\widetilde{I}=I'\cap \ZZ_p[x_1,  \ldots, x_d]$$ and
$$\widetilde{J}=J'\cap \ZZ_p[x_1,  \ldots, x_d].$$  
Since $I', J'$ are
$\QQ_p[x_1, \ldots , x_d]$-ideals,
the ideals $\widetilde{I}, \widetilde{J}$ are necessarily $p$-reduced.
Since $\taux$ restricts to the automorphism $\taux_0$ of $\ZZ_p[x_1,  \ldots, x_d],$ 
a similar argument to the one employed earlier in the proof gives 
$$
 \{m \in \ZZ: \taux^m (I') \subset J' \} \equiv  
 \{m \in \ZZ: \taux_0^m(\widetilde{I}) \subset  \widetilde{J} \}.
$$
Theorem~\ref{th41} now applies to show that the set on the right is the union of a finite
number of complete-doubly infinite arithmetic progressions plus a
finite set;  this equals the set of $n$ with $\Es^n(I_0) \supseteq J_0$. 
Finally, using the claim above,   this set is identical to the set of integers $n$ for which $\sigma^n(I)\supseteq J$,
giving the result. 
\end{proof}

%
%
%
%
\section{Proof of  Main Result}\label{sec5}
\setcounter{equation}{0}

In this section we prove Theorem~\ref{thm:main}.
This is established by reduction of the general case
to the polynomial ring case using the following result of
Srinivas, and then  using Theorem \ref{thm: polycase}.\smallskip


\begin{thm}~\label{th61}
(Srinivas) Let $A$ be a finitely generated algebra over
an infinite field $K$. Then there exists a natural
number $n=n(A)$ such that for all $N > n$, if
\[
f: K[x_1, \ldots, x_N] \to A~~\mbox{and}~~ g: K[x_1, \ldots, x_N] \to A
\]
are two surjective  $K$-algebra homomorphisms, then there
is an elementary $K$-algebra automorphism
$\phi:  K[x_1, \ldots, x_N] \to K[x_1, \ldots, x_N]$ such that 
$f= g \circ \phi.$
\end{thm}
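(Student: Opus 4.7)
The plan is to work with the kernels of $f$ and $g$ and incrementally adjust a candidate map into an elementary automorphism. Set $R := K[x_1,\ldots,x_N]$, $I := \ker(f)$, and $J := \ker(g)$, so that $R/I \cong A \cong R/J$. The equation $f = g \circ \phi$ is equivalent to asking that $\phi$ be an algebra automorphism of $R$ with $\phi^{-1}(J) = I$, together with a compatibility of the induced isomorphisms $R/I \xrightarrow{\sim} R/J$. The hope is that, once $N$ is taken much larger than the embedding dimension of $\mathrm{Spec}(A)$, there is enough room to realize any such compatibility through a sequence of elementary moves.

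The first step is a naive lift: since $g$ is surjective, choose $y_i \in R$ with $g(y_i) = f(x_i)$, and define an algebra endomorphism $\psi : R \to R$ by $\psi(x_i) = y_i$. Then $f = g \circ \psi$ holds by construction. The defect is that $\psi$ need not even be an automorphism; nevertheless, modulo $J$ the map $\psi$ carries the relations defining $A$ to those same relations, so the obstruction to being an automorphism lives in a controlled module attached to $I$ and $J$. I would next try to correct $\psi$ stepwise, using elementary automorphisms of the form $x_i \mapsto x_i + P(x_1,\ldots,\widehat{x_i},\ldots,x_N)$ and diagonal rescalings, so as to straighten out the image of $\psi$ coordinate by coordinate without altering the condition $g \circ \psi = f$ modulo the pieces already processed.

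The heart of the matter is to show that these corrections can all be realized by \emph{elementary} rather than merely by unspecified automorphisms, and this is where taking $N$ sufficiently large relative to some invariant $n(A)$ becomes essential. I expect the required input to be a stabilization result in the spirit of Quillen--Suslin together with a cancellation theorem for algebras: after adding enough free variables to $R$, any two presentations of $A$ become equivalent under the subgroup of tame (i.e.\ elementary) automorphisms, because the wild phenomena that obstruct this in low variable count are absorbed by the extra slack. In other words, the extra variables provide room to slide any Jacobian-style correction into an elementary sequence via stably elementary-equals-elementary arguments for polynomial rings over $K$.

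The main obstacle I anticipate is exactly this last step. For small $N$ the automorphism group of $K[x_1,\ldots,x_N]$ is notoriously hard to understand (wild automorphisms exist already for $N=3$ by Shestakov--Umirbaev), so the proof cannot proceed by directly decomposing a given automorphism into tame factors. Instead it must show that, given the flexibility in choosing the preimages $y_i$ and the size of $N$, one never needs to invoke a wild automorphism: every required correction can be chosen tame from the outset. Making this flexibility quantitative, and pinning down the threshold $n(A)$ in terms of the number of generators and relations of $A$, is where I would expect the bulk of the technical work to lie, and I would draw on Srinivas's original techniques from \cite{Sr91} rather than attempt a self-contained argument here.
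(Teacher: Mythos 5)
The paper does not actually prove this statement: its ``proof'' is the single sentence ``See Srinivas [Theorem~2, p.~126]'' together with a clarifying remark about what ``elementary automorphism'' means. So there is no argument in the paper against which to check your sketch step by step. Your proposal ends by deferring to Srinivas's original techniques, which means you and the paper ultimately take the same route: treat the result as a black box and cite \cite{Sr91}. That said, what you offer before that deferral is a speculative outline rather than a proof, and you are upfront about this; a few of its details diverge from how Srinivas actually argues.

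Concretely: the naive lift $\psi$ with $g\circ\psi=f$ is a sensible starting move, but Srinivas does not then ``correct $\psi$ stepwise'' toward an automorphism while preserving $g\circ\psi=f$. His Theorem~2 is deduced from his Theorem~1, which shows that for $N$ large enough \emph{any} closed embedding of $\mathrm{Spec}(A)$ in $\mathbb{A}^N$ can be moved by an elementary automorphism into a fixed ``standard'' position, obtained by composing a chosen minimal embedding with a linear inclusion $\mathbb{A}^{n}\hookrightarrow\mathbb{A}^N$. Both $f$ and $g$ are normalized to this standard form, and the two normalizing elementary automorphisms are composed. The infiniteness of $K$ enters precisely to put things in generic linear position (generic projections and avoidance arguments), not through a Quillen--Suslin--type stabilization for projective modules; the latter analogy is misleading here, even though ``stability with extra variables'' is indeed the right slogan. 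Your concern about wild automorphisms is well placed but is resolved in Srinivas's argument by never producing an automorphism that needs to be decomposed: the construction only ever outputs compositions of linear maps and triangular substitutions. If you want to go beyond what the paper does (which is nothing), the thing to internalize from \cite{Sr91} is the reduction-to-standard-position argument, not a stepwise correction of a fixed lift $\psi$.
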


\paragraph{\bf Remark.} Here ``elementary  $K$-algebra automorphism'' means
one that is  a finite
composition of automorphisms of the types: 
\begin{enumerate}
\item[(i)] (Translations) $x_i \mapsto x_i + c_i$, with $c_i \in K$;
\item[(ii)] (Linear transformations)
$x_i \mapsto \sum_j c_{i,j}x_j$ where $(c_{i,j}) \in {\rm GL}_{N}(K)$; 
\item[(iii)](Triangular automorphisms) $x_i \mapsto x_i$ for $1 \le i \le N-1$
and $x_N\mapsto x_N+ F(x_1, x_2, \ldots, x_{N-1})$. 
\end{enumerate}
For our application we only need the fact that $\phi$ is a
$K$-algebra automorphism.

\begin{proof} See Srinivas \cite[Theorem 2, p. 126]{Sr91}.~
\end{proof}

We obtain the following result as an immediate corollary. \smallskip
\begin{prop} \label{prop: 82}
Let $A$ be a finitely generated algebra over a field $K$ and let $\phi:A\rightarrow A$ be a $K$-algebra automorphism.  Then there exists a natural number $N$,  
a surjective $K$-algebra homomorphism
 $\muu: K[x_1,\ldots ,x_N]\rightarrow A$, and a $K$-algebra automorphism $\tilde{\phi}$ of 
$K[x_1,\ldots ,x_N]$ such that:
\begin{enumerate}
\item{$\muu\circ \tilde{\phi}^n=\phi^n\circ \muu$ for all integers $n$;}
\item{If $I$ and $J$ are ideals in $A$, then 
$\phi^n(I)\supseteq J$ if and only if 
\[
\tilde{\phi}^n(\muu^{-1}(I))
\supseteq \muu^{-1}(J).
\]
}
\end{enumerate}
\end{prop}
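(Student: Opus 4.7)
The plan is to apply Srinivas's Theorem \ref{th61} to a pair of surjections, and then to leverage the automorphism property of the lift to get compatibility for all integer powers. Since $K$ has characteristic zero, $K$ is infinite, so Srinivas's theorem applies. Choose a natural number $N$ large enough that $N > n(A)$ (with $n(A)$ as in Theorem \ref{th61}) and that $A$ admits at least one surjective $K$-algebra homomorphism from $K[x_1,\ldots,x_N]$; both conditions are met by taking $N$ sufficiently large, padding with dummy variables if needed. Fix such a surjection $\muu: K[x_1,\ldots,x_N]\to A$. Then $\phi\circ\muu$ is another surjective $K$-algebra homomorphism from $K[x_1,\ldots,x_N]$ to $A$, so by Theorem \ref{th61} there is a $K$-algebra automorphism $\tilde\phi$ of $K[x_1,\ldots,x_N]$ with
\[
\phi\circ\muu \;=\; \muu\circ\tilde\phi.
\]

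To obtain statement (1) for all integers, observe that because $\tilde\phi$ is an automorphism, the relation above can be rewritten as $\muu\circ\tilde\phi^{-1}=\phi^{-1}\circ\muu$, and then an easy induction on $|n|$ gives $\muu\circ\tilde\phi^n=\phi^n\circ\muu$ for every $n\in\ZZ$. I would then record the following key set-theoretic identity: for every subset $E\subseteq A$ and every $n\in\ZZ$,
\[
\tilde\phi^n(\muu^{-1}(E)) \;=\; \muu^{-1}(\phi^n(E)).
\]
The inclusion $\subseteq$ is immediate from (1). For $\supseteq$, if $\muu(y)\in\phi^n(E)$, set $z=\tilde\phi^{-n}(y)$; then $\muu(z)=\phi^{-n}(\muu(y))\in E$, so $y=\tilde\phi^n(z)\in\tilde\phi^n(\muu^{-1}(E))$. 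This step uses in an essential way that $\tilde\phi$ is an automorphism, which is exactly what Srinivas's theorem guarantees.

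Finally, because $\muu$ is surjective, for any two subsets $E,F\subseteq A$ one has $E\supseteq F$ if and only if $\muu^{-1}(E)\supseteq \muu^{-1}(F)$; the nontrivial direction uses that every $a\in F$ can be written as $\muu(y)$ with $y\in\muu^{-1}(F)\subseteq\muu^{-1}(E)$. Applying this with $E=\phi^n(I)$ and $F=J$ and then invoking the identity above,
\[
\phi^n(I)\supseteq J \;\Longleftrightarrow\; \muu^{-1}(\phi^n(I))\supseteq \muu^{-1}(J) \;\Longleftrightarrow\; \tilde\phi^n(\muu^{-1}(I))\supseteq \muu^{-1}(J),
\]
which is statement (2). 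The main substantive ingredient is Theorem \ref{th61} itself; the only conceptual point that needs attention is that the lift must be an automorphism (not merely an endomorphism) in order to carry the argument across to negative powers of $\phi$ and to verify the displayed set-theoretic identity, and this is precisely the content supplied by Srinivas's result.
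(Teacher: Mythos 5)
Your proposal is correct and follows essentially the same route as the paper: invoke Srinivas's theorem to lift $\phi$ to an automorphism $\tilde\phi$ of a polynomial ring with $\phi\circ\nu=\nu\circ\tilde\phi$, extend to all integer powers by induction using invertibility, and then transfer ideal inclusions across $\nu$. One small point in your favor: by isolating the identity $\tilde\phi^n(\nu^{-1}(E))=\nu^{-1}(\phi^n(E))$ and the equivalence $E\supseteq F\Leftrightarrow\nu^{-1}(E)\supseteq\nu^{-1}(F)$ (for $\nu$ surjective), your proof of (2) is a bit more transparent than the paper's, whose displayed chain of inclusions silently uses that $\tilde\phi^n(\nu^{-1}(I))$ is saturated with respect to $\nu$ (equivalently, that $\tilde\phi$ preserves $\ker\nu$, which does follow from the intertwining relation but is left implicit there).
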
 

\noindent \begin{proof} We pick $n(A)$ as in the statement of Theorem \ref{th61}.
Since $A$ is finitely generated, by $f_1, \ldots , f_r$, say,  we can take $N= \max( r, \,n(A)+1)$
and construct a surjective map $\muu: K[x_1, \ldots , x_N] \rightarrow A$ sending $x_i \mapsto f_i$
for $1 \le i \le r$ and, if $r< n(A)+1$,  sending any extra $x_j$ to $0$. 
Now, since  $\muu$ and $\phi\circ \muu$ are surjective, by Theorem \ref{th61} there exists an
automorphism  
$\tilde{\phi}$ 
of $K[x_1,\ldots ,x_N]$ satisfying
$\muu\circ \tilde{\phi}=\phi \circ \muu$.  
By construction $\muu\circ \tilde{\phi}=\phi\circ\muu$. 
Note that by composing on the left by $\phi^{-1}$ and on the right by $\tilde{\phi}^{-1}$, we see
$$ \muu\circ \tilde{\phi}^{-1}=\phi^{-1}\circ\muu.$$

We now prove property (1)  by induction on $|n|$, noting
the base case $|n|=1$ is established (for both $n=1, -1$).
Assume that it is true for all integers $n$ with $0<|n|< m$.  
Then
$$
\phi^m\circ \muu\  = \  \phi\circ (\phi^{m-1}\circ \muu) \ = 
\ \phi\circ  (\muu\circ \tilde{\phi}^{m-1}) =
(\phi\circ \muu)\circ \tilde{\phi}^{m-1} = \muu\circ \tilde{\phi}^m.$$
A similar argument shows that $\phi^{-m}\circ \muu = \muu\circ \tilde{\phi}^{-m}$,
and so (1)  follows by induction.

To verify (2),  suppose that $I$ and $J$ are ideals of $A$.  
Since $\muu$ is surjective, we have
$\muu(\muu^{-1}(L))=L$ for every ideal of $A$.
If $\phi^n(I)\supseteq J$, then 
$\phi^n\circ \muu(\muu^{-1}(I))\supseteq J$.  
Consequently, $\muu\circ \tilde{\phi}^n(\muu^{-1}(I))\supseteq J$.  Thus
$$
\tilde{\phi}^n(\muu^{-1}(I))\supseteq 
\muu^{-1}(\muu\circ \tilde{\phi}^n(\muu^{-1}(I)))\supseteq \muu^{-1}(J).
$$
Also, if $\tilde{\phi}^n(\muu^{-1}(I))
\supseteq \muu^{-1}(J)$, then 
$\muu(\tilde{\phi}^n(\muu^{-1}(I)))\supseteq \muu(\muu^{-1}(J)) = J$.
Since $\muu\circ \tilde{\phi}^n = \phi^n\circ\muu$, we see
${\phi}^n(I)={\phi}^n(\muu(\muu^{-1}(I))\supseteq J$.  
Thus (2) is established. $~~~$ \end{proof}


\begin{proof}[Proof of Theorem~\ref{thm:main}.]
We are given a field $K$ of characteristic $0$ and a finitely generated commutative $K$-algebra $A$ with a $K$-algebra automorphism $\sigma$.  We wish to show that the set of integers $n$ such that $\sigma^n(I)\supseteq J$ is a finite union of complete doubly-infinite arithmetic progressions along with a finite set.

By Proposition \ref{prop: 82} there exists a polynomial algebra $K[x_1,\ldots, x_N]$ with 
an automorphism $\phi$ and ideals $I'$ and $J'$ such that $$\phi^n(I')\supseteq J'  \iff \sigma^n(I)\subseteq J.$$  The result now follows from Theorem \ref{thm: polycase}. \end{proof}


%
%
%
%

\section{Examples}\label{sec6}
\label{examples}
\setcounter{equation}{0}

We now  give  several examples which show that results for general ideals $I, J$ can
change the answers compared to their associated radical ideals $\sqrt{I}, \sqrt{J}$: allowing non-radical ideals can change
the structure of infinite arithmetic progressions, or eliminate them entirely.  We recall that in a commutative ring $R$, the \emph{radical} is the ideal $I$ consisting of all nilpotent elements of $R$.  In particular, $R/I$ is reduced.  \smallskip

\begin{exam}~\label{exam1}
Let $K$ be an algebraically closed field of characteristic $0$ and let $A=K[x,y,z,t,u]$.  
We define a $K$-algebra automorphism $\sigma$ by
$$
x\mapsto  y-(x-y+yz)t+(x-y+yz),\qquad
 y \mapsto x-y+yz, \qquad
z \mapsto -z,$$
$$
t\mapsto -t, \qquad
u\mapsto u.
 $$
Then $\sigma$ is an automorphism. Let
\begin{eqnarray*} 
J &=& (x^2,xy,y^2,y-x,zt-1,x-u), \\
I &=&(x^2,xy,y^2,y-x,zt-1,x(z-1),x-u).
\end{eqnarray*}
Then 
$$
\sigma^n(\sqrt{I})\supseteq \sqrt{J}~~~\mbox{for all}~~~ n \in \ZZ.
$$
However we have
$$
\sigma^n(I )\supseteq J ~~~\mbox{if and only if }~ ~~n \equiv ~0, 3~(\bmod ~4)
$$
Also
$$
\sigma^n(I ) \supseteq \sqrt{J} ~\mbox{never holds}.
$$
\end{exam} 

\noindent \begin{proof}
We note that $\sigma$ is an automorphism, since it is a composition of automorphisms $\sigma_3\circ \sigma_2\circ \sigma_1$, where
$$
\begin{array} {rclcrclcrcl}
\sigma_1(x) &=&x+yt-y, && \sigma_2(x) &=& y,&& \sigma_3(x) &=& x-y+yz\\
\sigma_1(y)&=&y, && \sigma_2(y) &=& x,&&\sigma_3(y) &=& ~y\\
\sigma_1(z)&=&z, &&\sigma_2(z) &=& z, &&\sigma_3(z) &=& -z\\
\sigma_1(t)&=&t ,&& \sigma_2(t) &=& t, &&\sigma_3(t) &=& -t\\
\sigma_1(u)&=&u, && \sigma_2(u) &=& u, &&\sigma_3(u) &=& ~u\\
\end{array}
$$
We note that $\sqrt{J}\supseteq (x,y,zt-1,u)$, as $x^2,y^2\in I$ and $zt-1, x-u\in J$.  We claim that $\sqrt{J}=(x,y,zt-1,u)$.  To see this, observe that $J\subseteq (x,y,zt-1,u)$ and so $A/J$ surjects onto $A/(x,y,zt-1,u)\cong K[z,z^{-1}]$.  Since $K[z,z^{-1}]$ is reduced, we see that $\sqrt{J}$ must have zero image under this homomorphism and so $\sqrt{J}\subseteq (x,y,zt-1,u)$. Thus $\sqrt{J}= (x,y,zt-1,u),$ which is a $\sigma$-invariant ideal.  
Also $J \subset (x, y, zt-1, xz, u) \subset \sqrt{I}$ hence
$\sigma^n(\sqrt{J})\subseteq \sqrt{I}$ for all $n \in \ZZ$.  Let $L=(x^2,xy,y^2,y-x,zt-1)$.  
We note that $L\subseteq J$ is $\sigma$-invariant.  An easy induction shows that 
$$
\sigma^n(x)\equiv (-1)^{{n-1\choose 2}}xz^{n} ~(\bmod ~L)
$$ 
for 
$n\in \mathbb{Z}$, where we interpret $z^{-i}$ as being $t^i~(\bmod ~L)$.  Thus
$\sigma^n(J)\subseteq I$ if and only if 
$$(-1)^{{n-1\choose 2}}xz^{n} -u\in L+A(x(z-1))+A(x-u).$$
This occurs if and only if 
$$
(-1)^{{n-1\choose 2}}x -u \in L+A(x(z-1))+A(x-u).
$$  
If $n\equiv 0,1~(\bmod ~4)$ then this occurs.  
Note that 
$$
L+A(x(z-1))+A(x-u)\subseteq (z-1,t-1,x^2,y,x-u)
$$ 
and 
$$
A/(z-1,t-1,x^2,y,x-u)\cong K[x]/(x^2).
$$
The image of $(-1)^{{n-1\choose 2}}x -u$ under this homomorphism is 
$(-1)^{{n-1\choose 2}}x -x+(x^2),$ which has nonzero image for $n\equiv 2,3~(\bmod ~4)$. 
 Hence we have two arithmetic progressions 
$n\equiv 0~(\bmod~4)$ and $n\equiv 1~(\bmod ~4)$ for which $\sigma^n(J)\subseteq I$.  Equivalently, $\sigma^n(I)\supseteq J$ if and only if $n\equiv ~0,3~(\bmod~ 4)$. 
 We already saw that $\sigma^n(\sqrt{I})\supseteq \sqrt{J}$ for all $n$.  
 Finally, note that we always have
$\sigma^n(\sqrt{J})\not\subseteq I$, since $\sqrt{J}$ is $\sigma$-stable and $x\not \in I$.
$~~~$  \end{proof} 

We next consider finitely generated commutative $K$-algebras $A$ having a nontrivial radical. 
We give two 
 examples of automorphisms of algebras with nonzero radicals for which the radical affects the dynamics non-trivially.  We begin with a simple example that shows that an automorphism of a ring whose action is trivial on the reduced ring can still produce non-trivial dynamics.\smallskip


\begin{exam}~\label{exam2}
 Let $K$ be a field of characteristic $0$ and let $A=K[x,y,z]/(x,y)^3$.  Let $\sigma:A\rightarrow A$ be the automorphism given by $\sigma(x)=y$, $\sigma(y)=x$, $\sigma(z)=z$.  Let $I=(x)$ and $J=(y)$.  Then 
 $$
 \sigma^n(I)\supseteq J ~~~\mbox{if and only if} ~~~n\equiv 1~(\bmod~ 2).
 $$.
\end{exam}

\noindent \begin{proof}
This holds by inspection. The point of this  example 
is that $\sigma$ induces the trivial automorphism on $K[z]\cong A/J(A)$,
where  $J(A)$ denotes the radical of the zero ideal.
$~~~$\end{proof}

We next give an example of an automorphism of a non-reduced ring $A$ with suitable ideals exhibiting 
nontrivial dynamics, consisting of infinite arithmetic progressions, on both $A$ and its reduction $A/ J(A)$,
but the dynamics differ. \smallskip


\begin{exam}~\label{exam3}
 Let $K$ be a field of characteristic $0$,
  and let $A=K[u,v,y,y^{-1},z,z^{-1}]/(u,v)^3$.  We let $\sigma:A\rightarrow A$ be the automorphism given by 
  $\sigma(u)=uy$, $\sigma(v)=vz$, $\sigma(y)=-y$, $\sigma(z)=z$.  Let 
\begin{eqnarray*}
 J &=& (u+v)\\
 I &=& (y-1,z-1,u+v). 
 \end{eqnarray*}
  Then 
 $$
 \sigma^n(I) \supseteq J ~~~\mbox{ if and only if} \,~~~n\equiv 0,3~(\bmod~ 4).
 $$
\end{exam}

\noindent \begin{proof}
We have 
$$\sigma^n(J)=\left( (-1)^{n-1\choose 2} uy^n+vz^n\right),
$$
which is contained in $I$ if and only if ${n-1\choose 2}\equiv 0~(\bmod~ 2)$.  This occurs exactly when $n\equiv 0,1~(\bmod~ 4)$.  
The result follows.
$~~~$ \end{proof} 
\vskip 2mm

\noindent In Example \ref{exam3}  the ring $A$ is non-reduced and 
  the arithmetic progressions  that occur each have ``gaps'' of length $4$.  
However when one studies  the action of the automorphism $\sigma$ on the reduced ring $A/ J(A) \simeq K[y,y^{-1},z, z^{-1}]$ 
we see that it has order $2$, so that its action on the reduced ring for any ideals $I, J$ will  
have orbits which decompose into arithmetic progressions whose gaps have 
length $1$ or $2$ along with a finite set; in Example \ref{exam3} it is all integers $n$ since $J\subseteq J(A)$.  
An interesting question is whether the ``gaps''  between the two cases can be bounded in terms of the size of 
the gaps that occur from the induced action on the reduced ring  and the degree of 
nilpotency of the radical ideal.\smallskip

We conclude with examples  showing  that the hypotheses that the
commutative $K$-algebra $A$ must be finitely generated over $K$, and that $K$ must have
 characteristic zero, are both needed for the truth of  the theorems above.\medskip


\begin{exam} ~\label{exam4} 
Let $K$ be a field and let $S$ be an arbitrary subset of the 
integers.  Then there exists a commutative $K$-algebra 
$A= A_{S}$ 
which is not Noetherian 
and which is infinitely generated over $K$, having the property
that it has an 
automorphism $\sigma$ and ideals $I$ and $J$ such that 
$\sigma^i(I)\supseteq J$ holds if and only if $i\in S$.
\end{exam}

\noindent \begin{proof}
Let $A=K[x_n~:~n\in\mathbb{Z} ]$ be a polynomial ring in infinitely 
many variables and
let $\sigma$ be the two-sided shift automorphism defined by 
$\sigma(x_i)=x_{i+1}$ for $i\in \mathbb{Z}$.  Given a subset $S$ of integers, we
let $P_S=(x_i~|~-i\in S)$.  Then 
$\sigma^i((x_0))\subseteq P_S$ if and only if $-i\in S$ and so
$$\sigma^n(P_S)\supseteq (x_0)$$ precisely when $n\in S$.   $~~~$\end{proof} 
\vskip 2mm

If we impose the extra requirement that $A$ be Noetherian, 
but allow it to be infinitely generated, there are nontrivial
restrictions on the allowable sets $S \subset \ZZ$ giving ideal inclusions.
A result of Farkas~\cite[Theorem 8]{Fa87} (applied with  $G=\ZZ$)
shows that if $A$ is Noetherian and $\phi$ is an endomorphism of $A$,
then 
the set of natural numbers $i$ such that $\phi^i(I)\subseteq J$ 
must either be the entire set of natural numbers, or 
else  must have a
{\em syndetic} complement; that is, there exists a natural number $d$ such that if $m$ is in the complement 
then there exists $j$ with $1\le j\le d$ such that $m+j$ is also in the complement. For $K$ a field of
characteristic $0$, we do not know any example of an
infinitely generated Noetherian commutative $K$-algebra $A$ with ideals such that the
set $S$ is not a finite union of arithmetic progressions, possibly augmented by a finite set. \smallskip

Our final example shows that  the hypothesis  that the ground field $K$ have characteristic zero is   necessary for 
Theorem \ref{thm:main}  to hold.
In 1953 Lech \cite{Le53} gave a counterexample in
positive characteristic $p$, and we observe it 
applies at  the level of ideals. \smallskip


\begin{exam} ~\label{exam5} (Lech) Let $p$ be a prime, let 
$K=\mathbb{F}_p(t)$ for the finite field $\mathbb{F}_p$, and let $A=K[x,y]$.  Define
$ \sigma : A \rightarrow A$ by $\sigma(x)=tx$ and 
$\sigma(y)=(1+t)y$.  Take
\begin{eqnarray*}
 J &=& (x+y-1)\\
 I &=& (x-1, y-1). 
 \end{eqnarray*}
  Then 
 $$
 \sigma^n(I) \supseteq J ~~~\mbox{ if and only if} \,~~~n \in \{ -1, -p, -p^2, \dots \}.
 $$
\end{exam}

\noindent \begin{proof}
 We show the equlvalent assertion
$\sigma^{n}((x-y+1))\subseteq (x-1,y-1)$ if and only if  $n \in \{1,p,p^2,\ldots \}$.
Note that $\sigma^n(x-y+1) = t^n x - (1+t)^n y + 1$, whence we have the ideal inclusion
 $\sigma^n((x-y+1))\subseteq (x-1,y-1)$ if and only if $t^n-(1+t)^n+1=0$.  
This equation holds if and only if $n$ is a power of $p$. \end{proof}
 \vskip 2mm

In the case $K$ has characteristic $p>0$, Derksen \cite{De07} has further shown that if 
$\sigma$ is a 
linearizable endomorphism of $A=K[x_1,\ldots ,x_d]$, 
then for ideals $I$ and $J$ of $A$, the set of $m$ such that 
 $\sigma^m(I)\subseteq J$ can be classified; in particular this set is always 
 a $p$-\emph{automatic set},
 as defined in  Allouche and Shallit \cite{AS03}.

%
%
%
%

\section{Appendix:   Dynamics of Endomorphisms--Geometric versus Algebraic }
\label{appendix:bugs}
\setcounter{equation}{0}

This appendix presents a result addressing
 the difference between the geometric and algebraic formulations of dynamics of
endomorphisms. This result concerns the difference between the 
geometric action of endomorphisms acting as maps on $S^d$  by forward iteration versus
the algebraic action of endomorphisms acting  on ideals in $R= S[x_1, \ldots, x_d]$
 by backward iteration. It shows that the two actions differ in some circumstances.

 To state it, suppose   that  $S$ is an integral domain, take  $R= S[x_1, \ldots, x_d]$
 and let $\sM$ be a subset of $R$.   The  set of 
$S$-points  cut out by $\sM$ in $S^d$ is 
\[
V(\sM) :=V_{S}(\sM) =  \{ \bs =(s_1, \ldots , s_d) \in S^d: \ev_{\bs}(p(x))=0 ~\mbox{for all} ~ p(x) \in \sM \}.
\]
Of course $V(\sM) = V(I(\sM))$ where $I(\sM)$ is the smallest ideal containing $\sM$. 
Note that for endomorphisms $\tau$ and ideals $I$ that $\tau^{-1}(I)$ is an $R$-ideal
while $\tau(I):= \{ \tau(p(x)): p(x) \in I\}$ need not be an $R$-ideal.

\medskip


\begin{prop} ~\label{prop:bugs}
Let $S$ be an integral domain of any characteristic, and consider the polynomial ring
$R= S[x_1, \ldots, x_d]$.  Let $\tau: R \to R$ be a $S$-algebra endomorphism of $R$. 
 Consider the following possible
relations between ideals $I$ and $J$ of $R$, the $S$-algebra endomorphism $\tau$,
and the dynamical evaluation map $f_{\tau}$:
\begin{enumerate}
\item[ \it (a)]
$\tau^{-1}(I) \supseteq J,$  in the ring  $R$.
\item[\it (b)]
$I \supseteq \tau(J),$  in the ring $R$.
\item[\it (c)]
$V(I) \subseteq V(\tau(J)) $  in $S^d$.
\item[\it (d)]
$f_{\tau}(V(I)) \subseteq V(J)$ in $S^d$.
\item[\it (e)]
$V(I) \subseteq (f_{\tau})^{-1}(V(J))$  in $S^d$.
\end{enumerate}
Then: 
\begin{enumerate}
\item
 There holds 
 $(a) \Leftrightarrow (b)$  and $(c) \Leftrightarrow (d) \Leftrightarrow (e)$.
In addition   $(b) \Rightarrow (c)$, however in  general $(c) \not\Rightarrow (b)$. 
\item
If $S=K$ is an algebraically closed field of any characteristic and
$I$ is a radical ideal then $(c) \Leftrightarrow (b)$. In this case
all five relations (a)--(e) are equivalent.
\end{enumerate}
\end{prop}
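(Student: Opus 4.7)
My plan is to verify all five claims by essentially unwinding definitions; no deep results are needed beyond the Hilbert Nullstellensatz for part (2). The one substantive identity I rely on throughout is the naturality relation
\[
\ev_{\bs}(\tau(p)) \;=\; \ev_{f_{\tau}(\bs)}(p),
\]
valid for every $p \in R$ and every $\bs \in S^d$. This is immediate from the definition of the dynamical evaluation map $f_{\tau}$ together with the fact that $\tau$ is an $S$-algebra endomorphism with $\tau(x_i) = F_i$, so that $\tau(p)(x_1,\ldots,x_d) = p(F_1,\ldots,F_d)$ by \eqref{201}; evaluating both sides at $\bs$ produces the identity.

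For part (1) I would proceed in four short steps. First, (a) $\Leftrightarrow$ (b) follows at once from the definition $\tau^{-1}(I) = \{r \in R : \tau(r) \in I\}$, since $\tau^{-1}(I) \supseteq J$ just says $\tau(j) \in I$ for all $j \in J$. Second, (d) $\Leftrightarrow$ (e) is the set-theoretic tautology that $g(A) \subseteq B$ if and only if $A \subseteq g^{-1}(B)$. Third, for (c) $\Leftrightarrow$ (e) the naturality identity above yields the set-theoretic equality $V(\tau(J)) = f_{\tau}^{-1}(V(J))$, because $\tau(p)$ vanishes at $\bs$ exactly when $p$ vanishes at $f_{\tau}(\bs)$. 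Fourth, (b) $\Rightarrow$ (c) is immediate because $V$ reverses inclusions of subsets of $R$. To show that (c) $\not\Rightarrow$ (b) in general I would exhibit the simple counterexample with $\tau = \mathrm{id}_R$, $J = (x_1)$, and $I = (x_1^2)$: both $V(I)$ and $V(\tau(J))$ cut out the hyperplane $\{s_1 = 0\} \subset S^d$, whereas $\tau(J) = (x_1) \not\subseteq (x_1^2) = I$ because $I$ is not radical.

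For part (2), assume now that $S = K$ is algebraically closed and $I$ is a radical ideal of $R$; the only implication left to establish is (c) $\Rightarrow$ (b). I would invoke the Hilbert Nullstellensatz in the form that for any radical ideal $L$ of $K[x_1,\ldots,x_d]$ one has $\mathcal{I}(V(L)) = L$, where $\mathcal{I}(W) := \{ p \in R : p(\bs) = 0 \text{ for all } \bs \in W\}$ denotes the vanishing ideal of a subset $W \subseteq K^d$. Applying $\mathcal{I}$ to both sides of the containment $V(I) \subseteq V(\tau(J))$ reverses the inclusion and yields
\[
I \;=\; \mathcal{I}(V(I)) \;\supseteq\; \mathcal{I}(V(\tau(J))) \;\supseteq\; \tau(J),
\]
where the last containment is automatic since every element of $\tau(J)$ vanishes on $V(\tau(J))$ by definition. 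This is (b), and together with part (1) it makes all five statements equivalent in this setting.

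The main obstacle is conceptual rather than technical: the algebraic relations (a), (b) and the geometric relations (c), (d), (e) can only be bridged via the Nullstellensatz, whose hypotheses of algebraic closure and of radicality are both genuinely used. The counterexample with $I = (x_1^2)$ shows that dropping radicality breaks the bridge because the geometric side cannot detect infinitesimal information, while working over a non-closed field would in addition discard the contribution of non-rational points. These are precisely the phenomena that force the main body of the paper to argue directly at the ideal level rather than at the level of evaluation on geometric points.
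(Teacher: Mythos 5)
Your proof is correct and takes essentially the same approach as the paper's: the core ingredient in both is the naturality identity $\ev_{\bs}(\tau(p)) = \ev_{f_{\tau}(\bs)}(p)$, and part (2) rests on the Nullstellensatz applied to the radical ideal $I$ in the same way. Your only deviations are cosmetic simplifications (routing $(c)\Leftrightarrow(e)$ directly via $V(\tau(J)) = f_{\tau}^{-1}(V(J))$ rather than through $(d)$, and using the degenerate counterexample $\tau=\mathrm{id}$, $I=(x_1^2)$, $J=(x_1)$ instead of the paper's $\tau(x)=x^3$, $I=(x^4)$, $J=(x)$), both of which are valid.
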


This proposition   shows that for  endomorphisms  $\tau $ the obstruction to the equivalence of all
five of 
 these properties  is $(c) \not \Rightarrow (b)$, which concerns  {\em non-radical
 ideals $I$.} This difference matters at the level of the generalized SML theorem for
 ideal inclusion  (Theorem \ref{thm:main})  in that it can change the allowed arithmetic progressions for 
 inclusion  relations of ideals $I, I'$  
 having the same radical ideal $\sqrt{I} = \sqrt{I'}$ with a fixed $J$, cf. Example \ref{exam1}.
 
 \noindent \begin{proof}[Proof of Proposition  \ref{prop:bugs}.]
 Let $\tau$ be an $S$-algebra endomorphism of $R= S[x_1, ..., x_d]$.
 
(1). $(a) \Leftrightarrow (b)$.  For any 
$\sM \subseteq R$, set $\tau^{-1}(Y):= \{ r(x) \in R: \tau(r(x)) \in \sM\}$.
Then we have the inclusions 
\[
\tau \circ \tau^{-1}(\sM) \subseteq \sM \subseteq \tau^{-1} \circ \tau (\sM).
\]
Furthermore  if $\tau:= \sigma$ is an automorphism, then equality holds in
both inclusions.
Suppose $\tau^{-1}(I) \supseteq J$. We apply $\tau$ to both sides
to obtain
$$
I \supseteq \tau \circ \tau^{-1}(I) \supseteq \tau(J).
$$
Conversely, given $I \supseteq \tau(J)$, applying $\tau^{-1}$ to both sides gives
$$
\tau^{-1} (I) \supseteq \tau^{-1} \circ \tau(J) \supseteq J.
$$

$(b) \Rightarrow (c)$. We  are given $ I \supset \tau(J)$. 
Now suppose $\bs \in V(I)=V_{S}(I)$ and we are to show $\bs \in V(\tau(J))$. 
The hypothesis asserts $\ev_{\bs}(q(\bx)) =0$ for all $q(\bx) \in I$.  The
conclusion  asserts that
$$ 
\tau(p) (\bs) := \ev_{\bs}(\tau(p)(\bx)) =0 \quad \mbox{for all} \quad p(\bx) \in J.
$$
To verify this,  the inclusion $\tau(J) \subseteq I$ gives
$$
\tau(p)(x) = p( \tau(x_1), \ldots , \tau(x_d)) =: q(\bx), 
$$
for some $q(\bx) \in I$. 
Now
\[
 \ev_{\bs}(\tau(p))(\bx) = p( \ev_{\bs}(\tau(x_1)), \ldots,\ev_{\bs}(\tau(x_d)) ) 
\] 
and by  definition 
$\ev_{\bs}( \tau(x_1),\ldots, \tau(x_d)) = f_{\tau}(\bs).$
We conclude 
$$
\ev_{\bs}(\tau(p)(\bx))= 
\ev_{f_{\tau}(s)}(p(\bx)).
$$

Now $\bs \in V(I)$ gives  $q(\bs)=0$ whence
\begin{eqnarray*}
0= q(\bs) &:= & \ev_{\bs}(q(\bx)) = \ev_{\bs}(p(\tau(x_1), \ldots  , \tau(x_d)) )\\
 & =& p( \ev_{\bs}( \tau(x_1), \ldots , \tau(x_d)) = \ev_{f_{\tau}(s)}(p(\bx))
\end{eqnarray*} 
Since this holds for all $p(\bx) \in J$, we have $\bs \in V(\tau(J))$.\smallskip

$(c) \Leftrightarrow (d)$. Let $\bs \in S^d$.  The key property is
the identity, valid for all $r(\bx)= r(x_1, \ldots, x_d) \in R$, that
\[
\ev_{\bs} (\tau( r(x_1, \ldots , x_d)))= \ev_{\bs} (r(\tau(x_1), \ldots, r(\tau(x_d)) )= \ev_{f_{\tau}(\bs)}(r(x)).
\]
In what follows we  abbreviate $r(\bs) := \ev_{\bs}(r(x_1, \ldots , x_d))$.

Suppose $V(I) \subseteq V(\tau(J)).$ If $\bs \in V(I)$ then $q(\bs)=0$ for all $q(x) \in I$.
We are to show that $f_{\tau}(V(I)) \subseteq V(J)$, which asserts that 
for all $p(x) \in J$ there holds 
$ p(f_{\tau}(\bs)):=  \ev_{f_{\tau}(\bs)} (p(x))=0.$ Here, using the equality above 
$$
 \ev_{f_{\tau}(\bs)} (p(\bx)) = \ev_{\bs} (\tau(p) (\bx)) =: \tau(p (\bs)).
$$
Now $\tau(p(\bs))=0$ because $V(I) \subset V(\tau(J)).$

Conversely, suppose  $f_{\tau} (V(I)) \subseteq V(J)$ in $S^d$. 
We must show $V(I) \subset V(\tau(J))$. Given $\bs \in V(I)$.
then we must show that for each $p(x) \in J$ that
$$
\tau(p)(\bs):= \ev_{\bs}(\tau(p)(\bx))=0.
$$
Using the identity above 
$\ev_{\bs}(\tau(p)(\bx))=  \ev_{f_{\tau}(\bs)} (p(\bx)).
$
But by hypothesis  $f_{\tau}(\bs) \in V(J)$, whence 
$$
p(f_{\tau}(\bs)) := \ev_{f_{\tau}(\bs)} (p(\bx))=0,
$$
as required.\smallskip

$(d) \Leftrightarrow (e)$. For any subset $\sY \subset S^d$, set
$f_{\tau}^{-1}(\sY) = \{ \bs \in S^d:~ f_{\tau}(\bs) \in \sY\}.$ Then we have the inclusions
\[
f_{\tau} \circ f_{\tau}^{-1}(\sY) \subseteq \sY \subseteq f_{\tau}^{-1} \circ f_{\tau} (\sY).
\]
Furthermore if $\tau :=\sigma$ is an automorphism, we have equality in both inclusions,
for in this case $f_{\sigma}$ is a bijection with inverse $f_{\sigma^{-1}}$.
The argument is similar to $(a) \Leftrightarrow (b)$. Suppose $f_{\tau}(V(I)) \subset V(J)$.
Applying $f_{\tau}^{-1}$ yields
$$
V(I) \subseteq f_{\tau}^{-1} \circ f_{\tau}(V(I)) \subseteq f_{\tau}(V(J))
$$
Conversely, suppose $V(I) \subseteq f_{\tau}^{-1} (V(J)) $. Applying $f_{\tau}^{-1}$ yields
$$
f_{\tau}(V(I)) \subseteq f_{\tau} \circ f_{\tau}^{-1} (V(J)) \subseteq V(J)
$$

$(c) \not \Rightarrow (b)$. This is well known.
Take  $R=K[x]$, for $K$ a field. and $\tau(x) = x^3.$ Take $I = (x^4)$ and $J= (x).$
Then $ x^3 \in \tau(J) \subset (x^3)$, so that $V(I) = V(\tau(J)) = \{ 0\},$ and $V(I) \subseteq V(\tau(J)).$
But $x^3 \not\in I$, so $I \not\supseteq \tau(J)$. (Note also that  $\tau(J)=\{ \tau(p)(x): p(x) \in J\}$ is not an $R$-ideal.)\smallskip

(2). The assertion $(c) \Rightarrow (b)$ when $S=K$ is an algebraically closed
field (of any characteristic) and $I$ is a radical ideal  is the Nullstellensatz. 
This gives all the equalities $(a)$--$(e)$.    No condition is imposed on the ideal $J$ to get the equality.

 \end{proof} 
 
 \begin{rem} 
  {\em The  failure of the methods of this paper to handle general endomorphisms arises 
  not from Proposition \ref{prop:bugs}, but rather from 
 the failure of the generalized $p$-adic analytic arc theorem to apply to  certain endomorphisms.}
\end{rem} 
\vskip 1mm
\noindent

\end{document}